\theoremstyle{plain}
\newcommand{\cleqn}{\setcounter{equation}{0}}
\newcommand{\clth}{\setcounter{theorem}{0}}
\newcommand {\sectionnew}[1]{\section{#1}\cleqn\clth}
\newtheorem{theorem}{Theorem}[section]
\newtheorem{lemma}[theorem]{Lemma}
\newtheorem{definition-theorem}[theorem]{Definition-Theorem}
\newtheorem{proposition}[theorem]{Proposition}
\newtheorem{corollary}[theorem]{Corollary}
\newtheorem{definition}[theorem]{Definition}
\newtheorem{example}[theorem]{Example}
\newtheorem{remark}[theorem]{Remark}
\newtheorem{notation}[theorem]{Notation}
\newtheorem{assumption}[theorem]{Assumption}
\newtheorem{lemma-definition}[theorem]{Lemma-Definition}
\newtheorem{lemma-notation}[theorem]{Lemma-Notation}
\newtheorem{question}[theorem]{Question}
\newtheorem{remark-definition}[theorem]{Remark-Definition}
\newcommand \bthm[1] { \begin{theorem}\label{t#1} }
\newcommand \ble[1] { \begin{lemma}\label{l#1} }
\newcommand \bpr[1] { \begin{proposition}\label{p#1} }
\newcommand \bco[1] { \begin{corollary}\label{c#1} }
\newcommand \bde[1] { \begin{definition}\label{d#1}\rm }
\newcommand \bex[1] { \begin{example}\label{e#1}\rm }
\newcommand \bre[1] { \begin{remark}\label{r#1}\rm }
\newcommand \bnota[1] {\begin{notation}\label{n#1}\rm }
\newcommand \bas[1] { \begin{assumption}\label{a#1}\rm }
\newcommand \bqu[1] { \begin{question}\label{q#1}\rm }
\newcommand {\ethm} { \end{theorem} }
\newcommand {\ele} { \end{lemma} }
\newcommand {\epr} { \end{proposition} }
\newcommand {\eco} { \end{corollary} }
\newcommand {\ede} { \end{definition} }
\newcommand {\eex} { \end{example} }
\newcommand {\ere} { \end{remark} }
\newcommand {\enota} { \end{notation} }
\newcommand {\eas} {\end{assumption}}
\newcommand {\equ} {\end{question}}
\newcommand \lb[1]{\label{#1}}
\def \fg  {\mathfrak{g}}   
\def \fh  {\mathfrak{h}}
\def \fb  {\mathfrak{b}}
\def \fu  {\mathfrak{u}}
\def \ft  {\mathfrak{t}}
\def \fX {\mathfrak{X}}
\def \sB {{\scriptscriptstyle B}}
\def \sC {{\scriptscriptstyle C}}
\def \sG {{\scriptscriptstyle G}}
\def \sP {{\scriptscriptstyle P}}
\def \sQ {{\scriptscriptstyle Q}}
\def \sU {{\scriptscriptstyle U}}
\def \rank { {\mathrm{rank}} }
\newcommand{\beqa}{\begin{eqnarray*}}                     
\newcommand{\eeqa}{\end{eqnarray*}}
\def \hs {\hspace{.2in}}
\def \lara {\la \, , \, \ra}
\def \bfu {{\bf u}}
\def \calO{\mathcal{O}}
\def \pist {\pi_{\rm st}}
\def \Pist {\Pi_{\rm st}}
\def \wF_mn {\wF_m \times \wF_n}
\def \wF_mnC {\wF_{m, n, \, \sC}}
\def \wF {\widetilde{F}}
\def \bfu {{\bf u}}
\def \pist {\pi_{\rm st}}
\def \sP {{\scriptscriptstyle P}}
\def \sQ {{\scriptscriptstyle Q}}
\def \sG {{\scriptscriptstyle G}}
\def \sT {{\scriptscriptstyle T}}
\def \sC {{\scriptscriptstyle C}}
\def \sB {{\scriptscriptstyle B}}
\def \TT {\mathbb{T}}
\def \CC {{\mathbb C}}
\def \Pist {\Pi_{\rm st}}
\def \calT {{\mathcal{T}}}
\def \pist {\pi_{\rm st}}
\def \Pist {\Pi_{\rm st}}
\def \Rst {R_{\rm st}}
\def \lara {\langle \, , \, \rangle}
\begin{document}

\setlength{\baselineskip}{1.2\baselineskip}
\title[]{On the standard Poisson structure and a Frobenius splitting of the basic affine space}
\author{Jun Peng}
\address{
Department of Mathematics   \\
The University of Hong Kong \\
Pokfulam Road               \\
Hong Kong}
\email{genesis@connect.hku.hk}
\author{Shizhuo Yu}
\address{
Department of Mathematics   \\
The University of Hong Kong \\
Pokfulam Road               \\
Hong Kong}
\email{yusz@connect.hku.hk}
\date{}
\begin{abstract}
The goal of this paper is to construct a Frobenius splitting on $G/U$ via the Poisson geometry of $(G/U,\pi_{\sG/\sU})$, where $G$ is a simply-connected semi-simple algebraic group defined over an algebraically closed field of characteristic $p > 3$, $U$ is the uniradical of a Borel subgroup of $G$ and $\pi_{\sG/\sU}$ is the standard Poisson structure on $G/U$. We first study the Poisson geometry of $(G/U,\pi_{\sG/\sU})$. Then, we develop a general theory for Frobenius splittings on $\mathbb{T}$-Poisson varieties, where $\mathbb{T}$ is an algebraic torus. In particular, we prove that compatibly split subvarieties of Frobenius splittings constructed in this way must be $\mathbb{T}$-Poisson subvarieties. Lastly, we apply our general theory to construct a Frobenius splitting on $G/U$.
\end{abstract}
\maketitle
\footnotetext{{\it{Keywords}:  Frobenius splittings, Poisson $\mathbb{T}$-Pfaffians,  the basic affine space}}
\footnotetext{{\it{MSC}}: 14F17, 20G05, 53D17}

\sectionnew{Introduction and statements of results}\lb{intro}

\subsection{Introduction}\lb{subsec-intro}
Let $G$ be a simply-connected semi-simple algebraic group over an algebraically closed field $k$ and $U$ the uniradical of a Borel subgroup of $G$. The quasi-affine variety $G/U$ is called \emph{the basic affine space}.  When $k=\CC$, there exists the well-known standard Poisson structure $\pist$ on $G$, which has its roots in the theory of quantum groups (see \cite{Q1} \cite{Q2}).  The standard Poisson structure $\pist$ projects to a well-defined Poisson structure $\pi_{\sG/\sU}$ called the standard Poisson structure on $G/U$.

Let $\mathbb{T}$ be a complex torus. A $\mathbb{T}$-Poisson manifold, defined in \cite[Section 1.1]{Tleaves},  is a complex Poisson manifold $(X,\pi_X)$ with a $\mathbb{T}$-action preserving the Poisson structure
$\pi_X$. A $\mathbb{T}$-Poisson manifold gives rise to a decomposition of $X$ into the union of $\mathbb{T}$-leaves  of $\pi_X$, a term defined
in \cite[Section 2.2]{Tleaves}. Every single $\mathbb{T}$-leaf in $(X,\pi_X)$ admits a non-zero anti-canonical section on $X$ called the Poisson $\mathbb{T}$-Pfaffian, a term suggested by A. Knutson and M. Yakimov (see \cite[Section 1.1]{Tleaves}).
When $k=\CC$, $G$ becomes a complex semi-simple Lie group. Let $T$ be a maximal torus of $G$ normalizing $U$.  The left translation of $T$ on $G/U$ makes  $(G/U,\pi_{\sG/\sU})$ into a natural $T$-Poisson manifold. Although J.-H. Lu and V. Mouquin develop a general theory (see \cite[Section 1.5]{Tleaves}) to compute the $\mathbb{T}$-leaves of a class of $\mathbb{T}$-Poisson manifolds, the case $(G/U,\pi_{\sG/\sU})$ is an exceptional example. Using the Bruhat decomposition on $G/U$ and some propositions about the so-called $\mathbb{T}$-mixed Poisson structures (see \S\ref{T-mixed}), we figure out the $T$-leaf decomposition of $(G/U,\pi_{\sG/\sU})$.

Let $X$ be a scheme defined over an algebraically closed field $k$ of positive characteristics. In 1985, Mehta and Ramanathan introduced in \cite{Me-Ra} the notion of Frobenius splittings on $X$ as a $\mathcal{O}_X$-linear map
$
\varphi: F_*\mathcal{O}_X\rightarrow \mathcal{O}_X
$
that splits $F^\sharp: \mathcal{O}_X \rightarrow F_*\mathcal{O}_X $, where $F$ is the Frobenius endomorphism of $X$. They proved that both  Bott-Samelson varieties and Schubert varieties are Frobenius split and obtained a cohomology vanishing
result for line bundles on both Bott-Samelson varieties and Schubert varieties as a consequence. See \cite{linebundle,Me-Ra} for references.
The notion of Poisson $\mathbb{T}$-Pfaffian can be also defined on nonsingular $\mathbb{T}$-Poisson varieties defined over an algebraically closed field of positive characteristic (see Definition \ref{tpoissondef}). Recall that a Frobenius near-splitting on $X$ is an $\mathcal{O}_X$-linear map $\varphi: F_*\mathcal{O}_X\rightarrow \mathcal{O}_X$. Given a Poisson $\mathbb{T}$-Pfaffian $\sigma$ on a $\mathbb{T}$-Poisson variety $(X,\pi_X)$, one can then use $\sigma^{p-1}$ to define a Frobenius near-splitting, where $p={\rm char}k$. Furthermore, we prove that when the near-splitting is a splitting, then all compatibly split subvarieties of the splitting must be $\mathbb{T}$-Poisson subvarieties.

It is well-known that complex simple Lie algebras are classified into types $A,B,C,D,E,F,G$. Over an algebraically closed field $k$ of characteristic $p>0$, recall from \cite[\S 25.4]{LAbook} that a Chevalley algebra can be constructed via reduction modulo $p$ by choosing a Chevalley basis of a complex semi-simple Lie algebra. Then, Chevalley groups are obtained from Chevalley algebras and any simply-connected semi-simple algebraic group $G$ over $k$ is isomorphic to one of the Chevalley groups. See \cite{Ch} and \cite[\S 25.5]{LAbook} for more details. Assume additionally that $p>3$, our general theory can be applied to show that Frobenius splittings can be constructed on Bott-Samelson varieties via Poisson geometry, which share the same property as the canonical splittings constructed in \cite[Theorem 2.2.3]{BK} that all the sub-Bott-Samelson varieties are compatibly-split. Lastly, we apply our general theory to reach our goal, which is to construct a Frobenius splitting on $G/U$ via Poisson geometry.

\subsection{Poisson geometry of $(G/U,\pi_{\sG/\sU})$}\label{Sec12}
When $k=\CC$, let $\fg$ be the Lie algebra of $G$. Let $(B,B_-)$ be a pair of opposite
Borel subgroups of $G$.  Let $T=B\cap B_-$ a maximal torus of $G$ whose Lie algebra is denoted by $\fh$. Let $\lara_\fg$ be a multiple of Killing form on $\fg$.  Let $\pist$ be the standard Poisson structure on $G$ (see $\S$\ref{stPoisson} for detail), which is preserved under the left translation of $T$.  The Poisson structure $\pist$ projects to a $T$-invariant Poisson structure on $G/U$, denoted as $\pi_{\sG/\sU}$. We call $\pi_{\sG/\sU}$ \emph{the standard Poisson structure} on $G/U$.

Let $W=N_{\sG}(T)/T$ be the Weyl group of $(G,T)$, where $N_\sG(T)$ is the normalizer of $T$ in $G$. On $G/U$, there exists the Bruhat decomposition
\[
G/U=\bigsqcup_{w\in W}BwB/U,
\]
such that each $BwB/U$ is a Poisson submanifold of $(G/U,\pi_{\sG/\sU})$.
On each $BwB/U$, there exists an isomorphism
\[
\kappa_{\dot w}: BwB/U \rightarrow T\times (BwB/B)
\]
defined in (\ref{eqkappa}).
On $T\times (BwB/B)$, there exists a $\mathbb{T}$-mixed Poisson structure defined by
\begin{equation*}
0\bowtie_{A_0}\pi_{\sG/\sB}= ((0,\pi_{\sG/\sB})+\sum_k(\rho(H_k),0)\wedge(0,\lambda(H_k)),
\end{equation*}
where $\pi_{\sG/\sB}$ is the projection of $\pist$ to $G/B$, $(H_k)$ is an orthogonal basis of $(\fh,\langle,\rangle_\fg)$, $\rho$ and $\lambda$ are natural Lie algebra actions of $\fh$ on $T$ and $BwB/B$ respectively.

 The following Theorem A is the main tool of this paper in Poisson geometry which is applied in both Theorem B and Theorem E.
\begin{flushleft}
\textbf{Theorem A.} {\it
The isomorphism
\[\kappa_{\dot w}:(BwB/U,\pi_{\sG/\sU})\rightarrow (T\times (BwB/B),0\bowtie_{A_0}\pi_{\sG/\sB})\]
is Poisson.}
\end{flushleft}

Let $(X,\pi_X)$ be a $\mathbb{T}$-Poisson manifold. Suppose that $\Sigma$ is a symplectic leaf in $(X,\pi_X)$. The set
\[
\mathbb{T}\Sigma=\bigsqcup_{t\in \mathbb{T}}t\Sigma
\]
is called \emph{a single $\mathbb{T}$-leaf in $(X,\pi_X)$}. See \cite[Section 2.2]{Tleaves} for more details.

The left translation of $T$ on $G/U$ makes  $(G/U,\pi_{\sG/\sU})$ into a natural $T$-Poisson manifold. Using Theorem A, we figure out the $T$-leaves decomposition of $(G/U,\pi_{\sG/\sU})$.

\begin{flushleft}
\textbf{Theorem B.} {\it The decomposition of $G/U$ into $T$-leaves of the Poisson structure $\pi_{\sG/\sU}$ is
\[
G/U=\bigsqcup_{v\leq u}(BuB/U\cap B_-vB/U).
\]
 }
\end{flushleft}

\subsection{General construction of Frobenius splittings via Poisson geometry}\lb{subsec-nota-intro}
To study Frobenius splittings on $G/U$, where $G$ is defined over an algebraically closed field $k$ of characteristic $p>3$, we first develop a general construction via Poisson geometry. The details about Poisson algebras in positive characteristics are presented in \S\ref{ppc}.

\begin{definition}
{\rm	Let $X$ be a non-singular variety over $k$. A Poisson structure on $X$ is a section $\pi \in H^0(X, \wedge^2 \calT_X)$ satisfying $[\pi,\pi]=0$, where $[-,-]$ is the Schouten bracket (see \S \ref{ppc} for the definition of the Schouten bracket). A (non-singular) Poisson variety is a non-singular variety equipped with a Poisson structure.}
\end{definition}

\begin{definition}\label{tpoissondef}
{\rm	Let $\TT$ be an algebraic torus over $k$. A $\mathbb{T}$-Poisson variety is a Poisson variety $X$ with a $\TT$-action by Poisson automorphisms.
	A Poisson $\TT$-Pfaffian is a section $\sigma \in H^0(X, \omega_X^{-1})$ such that
	\[
	\sigma={\pi}^{[r]} \wedge \partial_{h_1} \wedge \ldots \wedge \partial_{h_l},
	\]
	where $r$ is half of the rank of $\pi$ and $\partial_{h_i}$ is the vector field on $X$ generated by some element $h_i \in \mathfrak{t} = {\rm Lie}(\TT)$. (See Definition \ref{gvf} for the definition of generating vector fields)}
\end{definition}

Let $s \in H^0(X,\omega_X^{1-p})$. One can regard $s$ as an element in $\rm{Hom}_{\calO_X}(\omega_X, F_* \omega_X) $ by $s(\gamma)=(s|_U, \gamma^{p-1}) \gamma$, for $\gamma \in H^0(\mathcal{U}, \omega_X)$, where $\mathcal{U}$ is an open subset of $X$. It is well-known that the Cartier operator gives rise to the isomorphism (\cite[Proposition 1.3.7]{BK})
\[
\mathcal{H}om_{\calO_X}(\omega_X, F_* \omega_X) \cong \mathcal{H}om_{\calO_X}(F_* \calO_X, \calO_X).
\]
Therefore in particular the $p-1$ tensor power $\sigma^{p-1}$ of a Poisson $\mathbb{T}$-Pfaffian $\sigma$ gives rise to a Frobenius near-splitting $\phi_\sigma$ of $X$.

\begin{definition}
{\rm A Frobenius Poisson $\mathbb{T}$-Pfaffian of a $\mathbb{T}$-Poisson variety $(X, \pi_X)$ is a Poisson $\mathbb{T}$-Pfaffian $\sigma$ such that the Frobenius near-splitting constructed in the way above by some non-zero scalar multiple of $\sigma$ is a Frobenius splitting.}
\end{definition}

\begin{flushleft}
	\textbf{Remark.}
	{\rm A basic fact about Frobenius splitting is that if one can verify the restriction of a near-splitting $\varphi$ on a variety $X$ to an open subset $\mathcal{U}$ of $X$ is a Frobenius splitting on $\mathcal{U}$, then $\varphi$ is a Frobenius splitting on $X$. See \cite[Section 1.1]{BK} for more details. The advantage of constructing the near-splitting via Poisson $\mathbb{T}$-Pfaffian is that often there exists some nice expression of the Poisson structure in local coordinates.}
\end{flushleft}

In \S\ref{CGL} we consider a special kind of $\mathbb{T}$-Poisson structure on affine spaces called \emph{Poisson CGL extensions},  a term coined by K.Goodearl and M.Yakimov and named after G.Cauchon, K.Goodearl, E.Letzter (\cite[Section 1.3]{PoissonCGL}). They play an important role in the study of cluster algebras (see \cite{PoissonCGL}) as well as integrable systems (see \cite{Integrablesystems}). We define the notion of a Poisson CGL bivector field in Definition \ref{PCGLV} and its log-canonical part in Remark \ref{lc}. Then, we obtain a sufficient condition such that a Frobenius Poisson $\TT$-Pfaffian exists on a $\TT$-Poisson variety.
\begin{flushleft}
\textbf{Theorem C.}
{\it If the rank of a Poisson CGL bivector field $\pi_X$ is equal to that of $(\pi_X)_0$, the log-canonical part of $\pi_X$, then a Frobenius Poisson $\TT$-Pfaffian exists.}
\end{flushleft}

For a general $\mathbb{T}$-Poisson variety $(X, \pi_X)$, the connection between compatibly split subvarieties of $X$ and $\mathbb{T}$-Poisson subvarieties of $(X,\pi_X)$ is presented in the following Theorem D.

\begin{flushleft}
\textbf{Theorem D.} {\it Let $\sigma$ be a Poisson $\mathbb{T}$-Pfaffian of a $\mathbb{T}$-Poisson variety $(X, \pi_X)$. If the Frobenius near-splitting $\phi_\sigma$ defined by $\sigma$ is a Frobenius splitting, then all compatibly split subvarieties
of $\phi_\sigma$ must also be $\mathbb{T}$-Poisson subvarieties with respect to the Poisson
structure $\pi_X$. }
\end{flushleft}

\subsection{A Frobenius Splitting of $G/U$ via Poisson $T$-Pfaffian}
Theorem C can be applied to concrete examples including Bott-Samelson varieties (see \S \ref{bottdef} for the definition of Bott-Samelson varieties) and the basic affine space when $k$ is of characteristic $p > 3$.

First, we equip the Bott-Samelson varieties with the standard Poisson structure as well as a torus action (see the paragraph below Remark \ref{sdpoissonpcc} for the definition of the standard Poisson structure of Bott-Samelson varieties and basic affine spaces over positive characteristic). Then, Theorem \ref{bottsplit} proves that a Frobenius Poisson $T$-Pfaffian exists on any Bott-Samelson varieties and is unique up to scalar multiple. Therefore, a Frobenius splitting $\phi_\sigma$ defined in \S\ref{subsec-nota-intro} exists on any Bott-Samelson variety.
Moreover, Frobenius splittings on Bott-Samelson varieties can be also obtained from a class of special anti-canonical sections by \cite[Theorem 2.2.3]{BK} and the canonical Frobenius splittings are constructed in the proof of \cite[Theorem 2.2.3]{BK}. On any Bott-Samelson variety, the Frobenius Poisson $T$-Pfaffian constructed in Theorem \ref{bottsplit} provides a choice of
anti-canonical section in \cite[Theorem 2.2.3]{BK} and a natural question is raised as follows.

\begin{question}
{\rm On any Bott-Samelson variety, is the Frobenius splitting induced by the Poisson $T$-Pfaffian in Theorem \ref{bottsplit} same as the Frobenius splitting constructed in the proof of \cite[Theorem 2.2.3]{BK} ?}
\end{question}

Based on Theorem \ref{bottsplit} and Theorem A, we reach our goal of this paper, which is to construct a Frobenius splitting on $G/U$ via the Poisson geometry as follows.
\begin{flushleft}
\textbf{Theorem E.} {\it Let $G$ be a simply-connected semi-simple algebraic group over an algebraically closed field $k$ of characteristic $p > 3$, $U$ the uniradical of a Borel subgroup of $G$. Let $T$ be a maximal torus of $G$ normalizing $U$, which acts on $G/U$ by left translation. Denote by $\pi_{\sG/\sU}$ the standard Poisson structure on $G/U$. Then there exists a unique (up to scalar multiples) Poisson $T$-Pfaffian $\sigma$ on $(G/U,\pi_{\sG/\sU})$ and $\sigma^{p-1}$ induces a Frobenius splitting on $G/U$.
 }
 \end{flushleft}

\subsection{Acknowledgments}
The authors would like to thank Jiang-Hua Lu for her help and encouragements.
This work was completed while the authors were supported by the University of Hong Kong Postgraduate
Studentship.
\sectionnew{Poisson geometry of $(G/U,\pi_{\sG/\sU})$}
\subsection{Basics on Poisson geometry}
Let $G$ be a complex Lie group and let $\pi_\sG$ be a holomorphic Poisson structure on $G$. The pair
$(G, \pi_\sG)$ is called \emph{a Poisson Lie group} if the multiplication map
\[(G\times G,\pi_\sG\times\pi_\sG)\rightarrow (G,\pi_\sG)\]
is Poisson. See \cite{Q2,lu-we:poisson} for references.
It is well known (see \cite[Theorem 1.2]{lu-we:poisson}) that $(G,\pi_\sG)$ is a Poisson Lie group if and only if $\pi_\sG$ is \emph{multiplicative}, i.e.
\begin{equation}
\pi_\sG(gh)=l_g\pi_\sG(h)+r_h\pi_\sG(g),\hs g,h\in G.
\end{equation}
\begin{lemma}\cite[Theorem 1.2]{lu-we:poisson}
Let $(G,\pi_\sG)$ be a Poisson Lie group. Then, $\pi_\sG$ vanishes at the identity and the group inversion map is anti-Poisson.
\end{lemma}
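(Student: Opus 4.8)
The plan is to argue pointwise with the bivector $\pi_\sG$, using only the multiplicativity relation $\pi_\sG(gh)=l_g\pi_\sG(h)+r_h\pi_\sG(g)$, in which $l_g$ and $r_h$ now denote the maps on $\wedge^2 TG$ induced by the differentials of left and right translation. For the vanishing at the identity I would simply set $g=h=e$; since $l_e$ and $r_e$ act as the identity on $\wedge^2 T_eG$, this gives $\pi_\sG(e)=2\pi_\sG(e)$, hence $\pi_\sG(e)=0$.

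For the inversion map $\iota\colon G\to G$, $g\mapsto g^{-1}$, the first step is the translation formula for its differential. From the identity of maps $\iota=r_{g^{-1}}\circ\iota\circ l_{g^{-1}}$ (valid for each fixed $g$, since $r_{g^{-1}}(\iota(l_{g^{-1}}(x)))=r_{g^{-1}}(\iota(g^{-1}x))=r_{g^{-1}}(x^{-1}g)=x^{-1}$) together with $d\iota_e=-\mathrm{id}$, the chain rule evaluated at $g$ gives $d\iota_g=-\,r_{g^{-1}}\circ l_{g^{-1}}\colon T_gG\to T_{g^{-1}}G$. The key observation is that on a degree-two multivector the overall sign $(-1)^2=1$ disappears, so the induced map $(d\iota_g)_*$ on $\wedge^2 T_gG$ coincides with the map induced by $r_{g^{-1}}\circ l_{g^{-1}}$; this is precisely why one lands on \emph{anti}-Poisson rather than Poisson, and it is the one place where care is needed.

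To finish, apply the multiplicativity relation at the pair $(g,g^{-1})$: using $\pi_\sG(e)=0$ one obtains $0=l_g\pi_\sG(g^{-1})+r_{g^{-1}}\pi_\sG(g)$, i.e. $r_{g^{-1}}\pi_\sG(g)=-l_g\pi_\sG(g^{-1})$. Applying $l_{g^{-1}}$ and using $l_{g^{-1}}\circ l_g=\mathrm{id}$ together with the commutativity of left and right translations yields $(d\iota_g)_*\pi_\sG(g)=l_{g^{-1}}r_{g^{-1}}\pi_\sG(g)=-\pi_\sG(g^{-1})$, that is $\iota_*\pi_\sG=-\pi_\sG$, which is exactly the assertion that $\iota$ is anti-Poisson.

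I do not expect any genuine obstacle: the entire argument is the multiplicativity relation plus the elementary formula for $d\iota$, and the two claims of the lemma are really just the cases $(e,e)$ and $(g,g^{-1})$ of that relation. The only thing to watch is the bookkeeping of the left/right-translation conventions and, as noted, the invisibility of the sign in $d\iota_g$ once one passes to $\wedge^2$. (There are more structural ways to package the anti-Poisson property—for instance via coisotropy of the graph of multiplication—but for a short self-contained proof the pointwise computation above seems optimal.)
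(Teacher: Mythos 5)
Your proof is correct; the paper does not prove this lemma but simply cites \cite[Theorem 1.2]{lu-we:poisson}, and your pointwise argument from multiplicativity (the cases $(e,e)$ and $(g,g^{-1})$, together with $d\iota_g=-r_{g^{-1}}\circ l_{g^{-1}}$ and the cancellation of the sign on $\wedge^2$) is exactly the standard Lu--Weinstein proof.
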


Let $(G,\pi_\sG)$ be a complex Poisson Lie group and let $(P,\pi_\sP)$ be a complex Poisson variety such that $G$ acts on $P$ from the left preserving the Poisson structure. If the action is transitive, $(P,\pi_P)$ is called a
\emph{Poisson homogeneous space} of $(G,\pi_\sG)$. See \cite{Poissonembedding} for reference.

A subgroup/variety $Q$ of a Poisson Lie group/variety $(G,\pi_\sG)$ is called \emph{a coisotropic subgroup/variety} if for any $q\in Q$
\[\pi_\sG(q)\in T_qQ\wedge T_qG\subset \wedge^2 T_qG.\]
The following proposition proved in \cite[Theorem 6]{STS} provides a class of examples of Poisson homogeneous spaces.
\begin{proposition}\label{qp}
Let $(G,\pi_\sG)$ be a Poisson Lie group and let $Q\subset G$ be a closed coisotropic Lie subgroup of $(G,\pi_{\sG})$. Denote the projection from $G$ to $G/Q$ by $p_{\sG/\sQ}$.
Then $p_{\sG/\sQ}(\pi_\sG)$ is a well defined Poisson structure on $G/Q$ and $(G/Q,p_{\sG/\sQ}(\pi_\sG))$
is a Poisson homogeneous space of $(G,\pi_\sG)$.
\end{proposition}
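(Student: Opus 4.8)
The plan is to define $\pi := p_{\sG/\sQ}(\pi_\sG)$ as the pushforward bivector field, namely $\pi(gQ) := (dp)_g(\pi_\sG(g)) \in \wedge^2 T_{gQ}(G/Q)$ where $p := p_{\sG/\sQ}\colon G \to G/Q$, and then to verify in turn: (i) $\pi$ is independent of the chosen coset representative, so it is a genuine bivector field; (ii) $\pi$ is holomorphic; (iii) $[\pi,\pi]=0$; (iv) the left $G$-action on $(G/Q,\pi)$ is Poisson. Closedness of $Q$ is used only to guarantee that $G/Q$ is a complex manifold and $p$ a holomorphic submersion admitting local sections. For (i), two representatives of a coset differ by right translation by some $q\in Q$, so I must check $(dp)_{gq}(\pi_\sG(gq)) = (dp)_g(\pi_\sG(g))$. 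Multiplicativity of $\pi_\sG$ gives
\[
\pi_\sG(gq) \;=\; l_g\,\pi_\sG(q) \;+\; r_q\,\pi_\sG(g),
\]
and since $p\circ R_q = p$ for $q\in Q$ (right translation preserves the fibres of $p$), applying $(dp)_{gq}$ to the second summand yields $(dp)_{gq}(r_q\,\pi_\sG(g)) = (dp)_g(\pi_\sG(g))$, which is exactly what is wanted.

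It therefore remains, for (i), to show the first summand is annihilated: $(dp)_{gq}(l_g\,\pi_\sG(q)) = 0$. This is where the coisotropy hypothesis enters. Writing $\bar{l}_g$ for left translation by $g$ on $G/Q$, one has $p\circ L_g = \bar{l}_g\circ p$, hence $(dp)_{gq}(l_g\,\pi_\sG(q)) = (d\bar{l}_g)_{qQ}\bigl((dp)_q\,\pi_\sG(q)\bigr)$, so it suffices that $(dp)_q\,\pi_\sG(q) = 0$ for $q\in Q$. But $\ker (dp)_q = T_q Q$ (the fibre of $p$ through $q\in Q$ is $Q$ itself), while coisotropy of $Q$ says $\pi_\sG(q)\in T_qQ\wedge T_qG$; every decomposable $a\wedge b$ with $a\in T_qQ$ is killed by $(dp)_q\wedge(dp)_q$, hence so is $\pi_\sG(q)$. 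This proves $\pi$ is well defined. For (ii), holomorphy is local on $G/Q$: over the image of a local holomorphic section $s$ of $p$ one has $\pi = (dp)\circ \pi_\sG\circ s$, a composition of holomorphic maps, and the local descriptions agree by (i).

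For (iii), the point is that, straight from the definition of $\pi$ as a pushforward, $p$ is a Poisson map: for holomorphic functions $f_1,f_2$ on an open $V\subset G/Q$ one has $\{f_1\circ p, f_2\circ p\}_G = \{f_1,f_2\}_{G/Q}\circ p$, because pairing $(dp)_g\pi_\sG(g)$ against $df_1\wedge df_2$ equals pairing $\pi_\sG(g)$ against $d(f_1\circ p)\wedge d(f_2\circ p)$. Equivalently, $\pi_\sG$ and $\pi$ are $p$-related, so their Schouten brackets are $p$-related; thus $[\pi,\pi]\circ p = (dp)\bigl([\pi_\sG,\pi_\sG]\bigr) = 0$, and since $p$ is a surjective submersion $[\pi,\pi]=0$. (Alternatively, pull the Jacobiator of $\pi$ back to that of $\pi_\sG$ using that $p$ is Poisson, and invoke surjectivity of $p$.)

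For (iv), the left $G$-action $a\colon G\times G/Q \to G/Q$ is transitive, and it fits into the identity $a\circ(\Id_G\times p) = p\circ m$ with $m\colon G\times G\to G$ the multiplication. Now $m$ is Poisson for $\pi_\sG\times\pi_\sG$ since $(G,\pi_\sG)$ is a Poisson Lie group, and $p$ is Poisson by (iii), so $p\circ m$ is Poisson; moreover $\Id_G\times p$ is a surjective Poisson submersion for the product structures. Pulling the desired identity $\{f_1\circ a, f_2\circ a\} = \{f_1,f_2\}\circ a$ back along $\Id_G\times p$ reduces it to the fact that $p\circ m$ is Poisson, and surjectivity of $\Id_G\times p$ then delivers the identity itself; hence $a$ is Poisson and $(G/Q,\pi)$ is a Poisson homogeneous space of $(G,\pi_\sG)$. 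The only genuinely delicate point is step (i): splitting $\pi_\sG(gq)$ via multiplicativity and recognizing that coisotropy of $Q$ is precisely the hypothesis needed to make the $l_g\,\pi_\sG(q)$ term project to zero; the remaining steps are formal once (i) is in hand.
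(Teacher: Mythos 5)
Your proof is correct. Note that the paper does not actually prove this proposition: it is quoted from Semenov-Tian-Shansky (cited as Theorem 6 of that reference), so there is no in-paper argument to compare against. What you have written is the standard direct verification, and all the key points are in order: well-definedness of the pushforward reduces, via multiplicativity $\pi_\sG(gq)=l_g\pi_\sG(q)+r_q\pi_\sG(g)$ and $p\circ R_q=p$, to showing $(dp)_q\pi_\sG(q)=0$, which is exactly what coisotropy of $Q$ together with $\ker(dp)_q=T_qQ$ gives; the Jacobi identity and the Poisson property of the action map then follow formally from $p$-relatedness of the bivectors and surjectivity of $p$ and $\Id_G\times p$. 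You also correctly interpret ``Poisson homogeneous space'' in Drinfeld's sense (the action map $(G\times G/Q,\pi_\sG\times\pi)\to(G/Q,\pi)$ is Poisson), which is the right notion here since the individual translations $\bar l_g$ are generally not Poisson automorphisms. The only cosmetic remark is that your identification of the decisive step is apt: everything besides the vanishing of $(dp)_q\pi_\sG(q)$ is formal bookkeeping with $p$-related multivector fields.
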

Let $\mathbb{T}$ be a complex torus. Recall that a complex $\mathbb{T}$-Poisson variety is a Poisson variety $(P,\pi_\sP)$ with a $\TT$-action by Poisson automorphisms. Let $\mathbb{T}_1$ and $\mathbb{T}_2$ be complex tori with Lie algebras $\ft_1$ and $\ft_2$ respectively.  Let $(P,\pi_\sP)$ be a complex $\mathbb{T}_1$-Poisson variety and $(Q,\pi_\sQ)$ be a complex $\mathbb{T}_2$-Poisson variety. The $\mathbb{T}_1$ and $\mathbb{T}_2$ actions are denoted by $\rho$ and $\lambda$ respectively. Given
\[A=\sum_k a_k\otimes b_k\in \ft_1 \otimes\ft_2,\]
define the bivector field on $P\times Q$ by
\begin{equation}\label{TMix}
\pi_\sP\bowtie_A\pi_\sQ= (\pi_\sP,0)+(0,\pi_\sQ)+\sum_k(\rho(a_k),0)\wedge(0,\lambda(b_k)).
\end{equation}
Consider the $(\mathbb{T}_1\times \mathbb{T}_2)$ action on $P\times Q$ defined by
\[(t_1,t_2)(p,q)=(t_1p,t_2q).
\]
The following lemma proved in \cite[Lemma 2.1.1]{Yuthesis}.
\begin{lemma}\label{mixedpoisson1}
For any $A\in\ft_1\otimes\ft_2$, $(P\times Q, \pi_\sP\bowtie_A\pi_\sQ)$ is a $(\mathbb{T}_1\times \mathbb{T}_2)$-Poisson variety.
\end{lemma}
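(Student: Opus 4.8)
The plan is to verify directly the two defining properties: that $\Pi:=\pi_\sP\bowtie_A\pi_\sQ$ is a Poisson bivector on $P\times Q$, i.e.\ $[\Pi,\Pi]=0$ for the Schouten bracket, and that the product action $(t_1,t_2)\cdot(p,q)=(t_1p,t_2q)$ of $\mathbb T_1\times\mathbb T_2$ preserves $\Pi$. Before anything else I would note that the mixed term is well defined independently of the decomposition $A=\sum_k a_k\otimes b_k$: the assignment $(a,b)\mapsto(\rho(a),0)\wedge(0,\lambda(b))$ is bilinear, since $\rho$ and $\lambda$ are linear and $\wedge$ is bilinear, hence descends to a linear map $\ft_1\otimes\ft_2\to H^0(P\times Q,\wedge^2\calT_{P\times Q})$. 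Write $\xi_k=(\rho(a_k),0)$ and $\eta_k=(0,\lambda(b_k))$, viewed as vector fields on $P\times Q$, so that $\Pi=(\pi_\sP,0)+(0,\pi_\sQ)+\sum_k\xi_k\wedge\eta_k$, and set $M=\sum_k\xi_k\wedge\eta_k$.

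For the Jacobi identity I would expand $[\Pi,\Pi]$ by bilinearity of the Schouten bracket and use the following elementary facts. (i) On a product $P\times Q$, a multivector field pulled back from $P$ and one pulled back from $Q$ have vanishing mutual Schouten bracket; in particular $[(\pi_\sP,0),(0,\pi_\sQ)]=0$, $[\xi_k,\eta_l]=0$, $[(\pi_\sP,0),\eta_l]=0$, $[(0,\pi_\sQ),\xi_l]=0$. (ii) Since $\ft_1$ and $\ft_2$ are abelian and $\rho,\lambda$ are Lie algebra actions, $[\xi_k,\xi_l]=(\rho([a_k,a_l]),0)=0$ and likewise $[\eta_k,\eta_l]=0$. (iii) Because the $\mathbb T_1$-action preserves $\pi_\sP$, the generating vector field $\rho(a_k)$ satisfies $\mathcal L_{\rho(a_k)}\pi_\sP=0$, so $[(\pi_\sP,0),\xi_k]=0$; symmetrically $[(0,\pi_\sQ),\eta_k]=0$. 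Feeding (i)--(iii) into the graded Leibniz rule $[\alpha,\beta\wedge\gamma]=[\alpha,\beta]\wedge\gamma+(-1)^{(|\alpha|-1)|\beta|}\beta\wedge[\alpha,\gamma]$ shows term by term that $[(\pi_\sP,0),M]=0$, $[(0,\pi_\sQ),M]=0$ and $[M,M]=0$; together with $[(\pi_\sP,0),(\pi_\sP,0)]=0$ and $[(0,\pi_\sQ),(0,\pi_\sQ)]=0$ (from $\pi_\sP,\pi_\sQ$ being Poisson) and (i), this yields $[\Pi,\Pi]=0$.

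For $\mathbb T_1\times\mathbb T_2$-invariance I would treat the three summands of $\Pi$ separately. The $\mathbb T_2$-factor acts trivially on $P$, so $(\pi_\sP,0)$ is $\mathbb T_2$-invariant, and it is $\mathbb T_1$-invariant by hypothesis; symmetrically for $(0,\pi_\sQ)$. For $M$: since $\mathbb T_1$ is abelian its adjoint action on $\ft_1$ is trivial, hence the generating vector field $\rho(a_k)$ is $\mathbb T_1$-invariant, and it is trivially $\mathbb T_2$-invariant; likewise $\lambda(b_k)$ is $\mathbb T_1\times\mathbb T_2$-invariant. Hence each $\xi_k\wedge\eta_k$, and so $M$, is $\mathbb T_1\times\mathbb T_2$-invariant. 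Therefore the product action preserves $\Pi$, so $(P\times Q,\Pi)$ is a $(\mathbb T_1\times\mathbb T_2)$-Poisson variety.

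The argument is essentially bookkeeping with the Schouten calculus; the only point requiring care is the sign tracking in the graded Leibniz rule, keeping in mind that $\xi_k$ and $\eta_k$ are odd (degree-one) elements. However, every structural bracket that arises — $[\xi,\xi']$, $[\eta,\eta']$, $[\xi,\eta']$, $[\pi_\sP,\xi]$, $[\pi_\sQ,\eta]$ and all cross-factor brackets — vanishes individually, so the conclusion needs no cancellation among distinct terms and the sign conventions are immaterial to the outcome. I therefore do not anticipate a genuine obstacle; the lemma reduces to these standard identities for the Schouten bracket on a product together with the abelianness of $\ft_1$ and $\ft_2$.
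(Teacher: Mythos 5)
Your proof is correct. Note that the paper itself does not prove this lemma but cites \cite[Lemma 2.1.1]{Yuthesis}, so there is no in-paper argument to compare against; your direct Schouten-calculus verification is the standard one, and each ingredient you invoke is sound: multivector fields supported on different factors of $P\times Q$ Schouten-commute, the abelianness of $\ft_1,\ft_2$ kills $[\xi_k,\xi_l]$ and $[\eta_k,\eta_l]$, and $\mathcal L_{\rho(a_k)}\pi_\sP=0$ because the $\mathbb T_1$-action preserves $\pi_\sP$, so every bracket in the expansion of $[\Pi,\Pi]$ vanishes term by term; the invariance argument via triviality of the adjoint action of a torus is likewise correct.
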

The mixed product Poisson structure on $P\times Q$ associated to the torus actions constructed in Lemma \ref{mixedpoisson1} on $P\times Q$
is simply called \emph{a $\mathbb{T}$-mixed Poisson structure}.
\subsection{The standard Poisson structure $\pi_{\sG/\sU}$ on $G/U$}\label{stPoisson}
Let $G$ be a simply-connected complex semi-simple Lie group and let $\fg$ be the Lie algebra of $G$. Let $(B,B_-)$ be a pair of opposite Borel subgroups of $G$. Denote $U$ and $U_-$ respectively the uniradical of $B$ and $B_-$. Let $T=B\cap B_-$ be a maximal torus of $G$. Denote Lie algebras of $B$, $B_-$, $U$, $U_-$ and $T$ by $\fb$, $\fb_-$, $\fu$, $\fu_-$ and $\fh$ respectively. Let $\Delta \subset \fh^*$ be
the set of roots of $\fg$ with respect to $\fh$, and let
\[\fg = \fh + \sum_{\alpha \in \Delta} \fg_\alpha\]
be the root decomposition of $\fg$. Let $\Delta_+ \subset \Delta$ and $\Gamma \subset \Delta_+$
be respectively the sets of positive roots and simple roots. We also write $\alpha > 0$ for $\alpha \in \Delta_+$.

For $\alpha > 0$, let $h_\alpha$ be the unique element in $[ \mathfrak{g}_\alpha ,\mathfrak{g}_{-\alpha} ]$ such that $\alpha(h_\alpha)=2$. Fix,
for each $\alpha \in \Delta_+$,  $e_\alpha \in \fg_\alpha$ and $e_{-\alpha} \in \fg_{-\alpha}$
such that $[e_\alpha, e_{-\alpha}]=h_\alpha$. Let $\langle\ , \ \rangle_\fg$ be a fixed multiple of Killing form on $\fg$ such that $\frac{\langle \alpha, \alpha \rangle}{2} \in \{1,2,3\}$ for each root $\alpha$, where $\langle\ ,\ \rangle$ is the induced bilinear form on $\fh^*$.
\begin{remark}
{\rm The existence of such choice follows from the \emph{proof} of \cite[Lemma 5]{charp}.}
\end{remark}

 For $\alpha>0$, denote $E_\alpha=\sqrt{\langle \alpha, \alpha\rangle}e_\alpha$ and $E_{-\alpha}=\frac{\sqrt{\langle \alpha, \alpha\rangle}}{2}e_{-\alpha}$.
Let $(H_i)$ be an orthogonal basis of $(\fh, \langle\ , \ \rangle_\fg)$.
\begin{definition}\cite[Chapter 4]{Q2}\cite[Section 6.1]{Tleaves}\label{sdpoisson}
	{\rm The standard Poisson structure $\pist$ on $G$ is defined by
		\begin{equation}
		\pist(g)= l_g\Lambda_{\rm st} - r_g\Lambda_{\rm st},\ g\in G
		\end{equation}
		where $l_g$ and $r_g$ are respectively the left and right translation of $g$ on $G$ and
		\[
		\Lambda_{\rm st} = \sum_{\alpha > 0} \frac{\langle \alpha, \alpha\rangle}{2} e_{-\alpha} \wedge e_\alpha=\sum_{\alpha > 0}E_{-\alpha}\wedge E_\alpha.
		\]}
\end{definition}

\begin{remark}
{\rm	The Poisson structure $\pist$ depends on the choices of the triple $(B, B_-, \langle\ , \ \rangle_\fg)$, but not on the choices of the root vectors $e_{\pm \alpha} \in \fg_{\pm \alpha}$.}
\end{remark}

Recall that in characteristic $p > 3$, the Lie algebra of a simply-connected semi-simple algebraic group can be constructed by reduction modulo $p$ via a Chevalley basis of a complex semi-simple Lie algebra. Since we have chosen the bilinear form so that $\frac{\langle \alpha, \alpha \rangle}{2} \in \{1,2,3\}$ for each root $\alpha$ in the complex case, $\frac{\langle \alpha, \alpha \rangle}{2}$ is non-zero modulo $p$ and the element
\[
\Lambda_{\rm st} = \sum_{\alpha > 0} \frac{\langle \alpha, \alpha\rangle}{2} e_{-\alpha} \wedge e_\alpha
\]
modulo $p$ is well-defined.

\begin{definition}\label{sdpoissonpc}
{\rm	The standard Poisson structure on a simply-connected semi-simple algebraic group over an algebraically closed field of characteristic $p > 3$ is defined by the same formula as in Definition \ref{sdpoisson}.}
\end{definition}

For a root $\beta$, denote by $u_\beta (z)$ the root-subgroup corresponding to $\beta$, where $z \in  \CC$.
For each $\alpha \in \Gamma$, the simple reflection $s_\alpha\in W$ is represented by
\[
\dot{s}_\alpha=u_{\alpha}(-1)u_{-\alpha}\left(1\right)u_{\alpha}\left(-1\right)
\] in $N_\sG(T)$. We denote by $P_{\alpha}=B \cup B\dot{s}_{\alpha}B$ the
parabolic subgroup corresponding to $\alpha \in \Gamma$. It is well-known that $P_\alpha$ is a Poisson-Lie subgroup of $G$ for all $\alpha$. We denote the restriction of $\pist$ on $P_\alpha$ by the same notation.
Let $(H_i)$ be an orthogonal basis of $(\fh,\langle, \rangle_\fg)$. Consider now the direct product Lie algebra $\fg\oplus \fg$ and define $R_{\rm st}\in (\fg\oplus\fg)^{\otimes 2}$ by
\begin{multline}
\Rst=\sum_{\alpha\in \Delta_+}(E_{-\alpha},E_{-\alpha})\wedge(E_\alpha,0)+\sum_{\alpha\in \Delta_+}(E_{\alpha},E_{\alpha})\wedge(0,E_{-\alpha})\\
+\frac{1}{2}\sum_i(H_i,H_i)\wedge (H_i,-H_i),
\end{multline}
and
\begin{equation}
\Pist=\Rst^l-\Rst^r.
\end{equation}
Then, $(G\times G,\Pist)$ is a Poisson Lie group, which is called the Drinfeld double of $(G,\pist)$. See \cite{DD} for more details.
Recall from \cite[Proposition 2.2]{DD} that
\[G_\Delta=\{(g,g):g\in G\}\subset G\times G\]
 is a Poisson Lie subgroup of $(G\times G,\Pist)$ satisfying the following Lemma \ref{PE}.
\begin{lemma}\label{PE}
The diagonal embedding
\[(G,\pist)\longrightarrow (G\times G,\Pist),\hs g\longmapsto (g,g)\]
is Poisson.
\end{lemma}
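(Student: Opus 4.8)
The plan is to deduce Lemma~\ref{PE} from the fact that $\pist$ and $\Pist$ are both multiplicative and presented in coboundary form, together with the fact that $\delta\colon G\to G\times G,\ g\mapsto(g,g)$, is a homomorphism of algebraic groups. First I would record an elementary criterion: if $\phi\colon(G_1,\pi_1)\to(G_2,\pi_2)$ is a homomorphism of connected Lie groups with $\pi_i(g)=l_g r_i-r_g r_i$ for fixed $r_i\in\wedge^2\fg_i$, then, since $d_g\phi\circ d_e l_g=d_e l_{\phi(g)}\circ d_e\phi$ (and likewise for right translations), one has for every $g\in G_1$
\[
(\wedge^2 d_g\phi)\big(\pi_1(g)\big)-\pi_2\big(\phi(g)\big)=l_{\phi(g)}E'-r_{\phi(g)}E',\qquad E':=(\wedge^2 d_e\phi)(r_1)-r_2 .
\]
Hence $\phi$ is Poisson if and only if $E'$ is $\Ad_{\phi(G_1)}$-invariant, equivalently (as $G_1$ is connected) $(\ad X)\cdot E'=0$ for all $X\in d_e\phi(\fg_1)$, where $\fg_2$ acts on $\wedge^2\fg_2$ by derivations.

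Applying this with $\phi=\delta$, $d_e\delta(X)=(X,X)$, $r_1=\Lambda_{\rm st}=\sum_{\alpha>0}E_{-\alpha}\wedge E_\alpha$ and $r_2=\Rst$, Lemma~\ref{PE} reduces to the assertion that
\[
E:=\sum_{\alpha>0}(E_{-\alpha},E_{-\alpha})\wedge(E_\alpha,E_\alpha)-\Rst\ \in\ \big(\wedge^2(\fg\oplus\fg)\big)^{\fg_\Delta},\qquad \fg_\Delta:=\{(X,X):X\in\fg\}.
\]
I would compute $E$ by decomposing each vector along $\fg\oplus\fg=(\fg,0)\oplus(0,\fg)$. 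The first sum expands to $(\Lambda_{\rm st},0)+(0,\Lambda_{\rm st})$ plus the cross terms $\sum_{\alpha>0}\big((E_{-\alpha},0)\wedge(0,E_\alpha)-(E_\alpha,0)\wedge(0,E_{-\alpha})\big)$, whereas the three terms of $\Rst$ collapse, after the obvious cancellations among the $(\fg,0)\wedge(0,\fg)$-pieces, to $(\Lambda_{\rm st},0)-(0,\Lambda_{\rm st})-\sum_i(H_i,0)\wedge(0,H_i)$. Subtracting, and regrouping the root cross terms together with $\sum_i(H_i,0)\wedge(0,H_i)$, identifies $E$ with the mixed Casimir $\sum_a(e_a,0)\wedge(0,e^a)$ attached to the pair $\fg_\Delta\subset\fg\oplus\fg$, where $(e_a)$ runs over a $\langle\,,\,\rangle_\fg$-basis of $\fg$ and $(e^a)$ over its dual basis, so that $\sum_a e_a\otimes e^a=\Omega_\fg$ is the quadratic Casimir of $(\fg,\langle\,,\,\rangle_\fg)$; here the normalizations $E_\alpha=\sqrt{\langle\alpha,\alpha\rangle}\,e_\alpha$, $E_{-\alpha}=\tfrac{1}{2}\sqrt{\langle\alpha,\alpha\rangle}\,e_{-\alpha}$ (whence $\langle E_\alpha,E_{-\alpha}\rangle_\fg=1$) are exactly what makes the bookkeeping close up.

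Finally, $\fg_\Delta$-invariance of the mixed Casimir is immediate: for $X\in\fg$, the $(\fg,0)\otimes(0,\fg)$-component of $(\ad(X,X))\cdot\sum_a(e_a,0)\wedge(0,e^a)$ is the image of $\sum_a[X,e_a]\otimes e^a+\sum_a e_a\otimes[X,e^a]=(\ad X\otimes 1+1\otimes\ad X)\Omega_\fg=0$, and likewise for the $(0,\fg)\otimes(\fg,0)$-component, so the whole expression vanishes; hence $E'=E$ lies in $\big(\wedge^2(\fg\oplus\fg)\big)^{\fg_\Delta}$ and $\delta$ is Poisson by the criterion above. (Alternatively, one may simply invoke \cite[Proposition 2.2]{DD}, which realizes $(G\times G,\Pist)$ as the Drinfeld double of $(G,\pist)$ with $G$ embedded as the diagonal.) I expect the only genuine obstacle to be the middle step — performing the term-by-term simplification of $E$ with the correct signs and the asymmetric normalizations of $E_{\pm\alpha}$, i.e.\ verifying by hand that $(\fg\oplus\fg,\fg_\Delta,\fg^*_{\rm st})$ is a Manin triple.
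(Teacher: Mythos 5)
Your overall strategy is sound and in fact goes further than the paper, which establishes this lemma only by citing \cite[Proposition 2.2]{DD}: reducing the Poisson property of a homomorphism between coboundary multiplicative structures to the $\Ad$-invariance of $E'=(\wedge^2 d_e\phi)(r_1)-r_2$ is correct, and your expansions of $\Delta(\Lambda_{\rm st})=\sum_{\alpha>0}(E_{-\alpha},E_{-\alpha})\wedge(E_\alpha,E_\alpha)$ and of $\Rst$ as printed are both correct. The problem is the final ``regrouping'': combining your own intermediate results, the difference is
\[
E \;=\; 2\,(0,\Lambda_{\rm st})\;+\;\sum_{\alpha>0}\bigl((E_{-\alpha},0)\wedge(0,E_\alpha)-(E_\alpha,0)\wedge(0,E_{-\alpha})\bigr)\;+\;\sum_i (H_i,0)\wedge(0,H_i),
\]
and this is \emph{not} the mixed Casimir. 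Two things go wrong: (i) the term $2(0,\Lambda_{\rm st})$ survives in the $\wedge^2(0,\fg)$-summand, and since $(\wedge^2\fg)^{\fg}=0$ for semisimple $\fg$ while $\Lambda_{\rm st}\neq 0$, this component alone already fails to be $\fg_\Delta$-invariant; (ii) the root cross terms you obtain form the \emph{antisymmetrization} $\sum_\alpha(E_{-\alpha}\otimes E_\alpha-E_\alpha\otimes E_{-\alpha})$, whereas the Casimir requires the symmetric combination $\sum_\alpha(E_{-\alpha}\otimes E_\alpha+E_\alpha\otimes E_{-\alpha})$. Because $\wedge^2(\fg\oplus\fg)$ splits $\fg_\Delta$-equivariantly as $\wedge^2(\fg,0)\oplus\bigl((\fg,0)\wedge(0,\fg)\bigr)\oplus\wedge^2(0,\fg)$, these two defects cannot cancel against each other, so the $E$ you actually computed is genuinely non-invariant and the argument does not close.

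The computation does close up, and yields exactly $\sum_a(e_a,0)\wedge(0,e^a)$ as you predicted, if the second sum in $\Rst$ carries the opposite sign, i.e.\ $\sum_{\alpha>0}(E_\alpha,E_\alpha)\wedge(0,-E_{-\alpha})$: equivalently, if $\Rst=\sum_a x_a\wedge\xi^a$ is built from a basis $(x_a)$ of $\fg_\Delta$ and its genuinely dual basis $(\xi^a)$ in $\fg^*_{\rm st}$ with respect to the split form $\langle x_1,y_1\rangle_\fg-\langle x_2,y_2\rangle_\fg$ on $\fg\oplus\fg$, for which $(0,-E_{-\alpha})$, not $(0,E_{-\alpha})$, is dual to $(E_\alpha,E_\alpha)$. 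So you must either locate and correct this sign (the lemma is certainly true by the cited reference, so the printed $\Rst$ presumably carries a typo), after which your Manin-triple verification is complete and is a genuine addition to what the paper writes; or, taking $\Rst$ literally as printed, fall back on the citation to \cite[Proposition 2.2]{DD} that you mention at the end, which is precisely all the paper itself does. A smaller point: for $\tfrac{1}{2}\sum_i(H_i,H_i)\wedge(H_i,-H_i)$ to contribute $-\sum_i(H_i,0)\wedge(0,H_i)$ with the $(H_i)$ self-dual, the basis must be orthonormal, not merely orthogonal.
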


It follows from the construction of $\pist$ that $U$ is a coisotropic subgroup of $(G,\pist)$. Moreover, the following lemma is proved in \cite[Section 6.1]{Tleaves}.
\begin{lemma}\label{submanifold}
The standard Poisson structure $\pist$ vanishes on $T$ and $BwB$ is a Poisson subvariety of $(G,\pist)$ for any $w\in W$.
\end{lemma}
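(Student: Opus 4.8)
The plan is to establish the two assertions in turn. The vanishing of $\pist$ on $T$ is a one-line computation: for $X\in\fg$ one has $l_gX=r_g(\Ad_gX)$ as tangent vectors at $g$, hence $l_g\Lambda-r_g\Lambda=r_g(\Ad_g\Lambda-\Lambda)$ for every $\Lambda\in\wedge^2\fg$; taking $g=t\in T$ and $\Lambda=\Lambda_{\rm st}$, and using $\Ad_te_\alpha=\alpha(t)e_\alpha$ and $\Ad_te_{-\alpha}=\alpha(t)^{-1}e_{-\alpha}$, one gets $\Ad_t(e_{-\alpha}\wedge e_\alpha)=e_{-\alpha}\wedge e_\alpha$ for every $\alpha>0$, so $\Ad_t\Lambda_{\rm st}=\Lambda_{\rm st}$ and $\pist(t)=0$.

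The core of the second assertion is the fact that $B$ is a Poisson--Lie subgroup of $(G,\pist)$, i.e.\ $\pist(b)\in\wedge^2T_bB$ for all $b\in B$. As $B$ is connected, this is equivalent to the infinitesimal condition $\delta_{\rm st}(\fb)\subseteq\wedge^2\fb$, where $\delta_{\rm st}(X)=(\ad_X\otimes1+1\otimes\ad_X)(\Lambda_{\rm st})$; the case $X\in\fh$ is the linearization of the computation above, and for $X=e_\beta$ ($\beta>0$), expanding $\delta_{\rm st}(e_\beta)=\sum_{\alpha>0}\tfrac{\langle\alpha,\alpha\rangle}{2}\big([e_\beta,e_{-\alpha}]\wedge e_\alpha+e_{-\alpha}\wedge[e_\beta,e_\alpha]\big)$ shows that every summand lies in $\wedge^2\fb$ except for multiples of $e_{-\mu}\wedge e_{\mu+\beta}$ ($\mu>0$, $\mu+\beta$ a root), whose total coefficient is $\tfrac{\langle\mu,\mu\rangle}{2}N_{\beta,\mu}+\tfrac{\langle\mu+\beta,\mu+\beta\rangle}{2}N_{\beta,-(\mu+\beta)}$ (writing $[e_\gamma,e_\delta]=N_{\gamma,\delta}e_{\gamma+\delta}$). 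The Jacobi identity on $e_\beta,e_\mu,e_{-\mu-\beta}$, combined with $h_{\mu+\beta}=\tfrac{\langle\mu,\mu\rangle}{\langle\mu+\beta,\mu+\beta\rangle}h_\mu+\tfrac{\langle\beta,\beta\rangle}{\langle\mu+\beta,\mu+\beta\rangle}h_\beta$, gives $N_{\beta,-(\mu+\beta)}=-\tfrac{\langle\mu,\mu\rangle}{\langle\mu+\beta,\mu+\beta\rangle}N_{\beta,\mu}$, and this makes the coefficient vanish. This is exactly the point at which the normalization $\langle\alpha,\alpha\rangle/2\in\{1,2,3\}$ is used; since the coefficient in question is an integer that vanishes over $\CC$, the same holds over $k$ of characteristic $p>3$. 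Thus $B$ (and symmetrically $B_-$) is a Poisson--Lie subgroup.

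Now fix $w\in W$, a representative $\dot w\in N_\sG(T)$, and a point $x=b_1\dot w b_2$ of $BwB$ ($b_1,b_2\in B$); I must check $\pist(x)\in\wedge^2T_x(BwB)$. Applying multiplicativity of $\pist$ twice,
\[
\pist(b_1\dot w b_2)=l_{b_1\dot w}\pist(b_2)+l_{b_1}r_{b_2}\pist(\dot w)+r_{\dot w b_2}\pist(b_1).
\]
Since $\pist(b_2)\in\wedge^2T_{b_2}B$ by the previous step and $b_1\dot wB\subseteq BwB$, the first term is tangent to $BwB$ at $x$; likewise $\pist(b_1)\in\wedge^2T_{b_1}B$ together with $B\dot w b_2\subseteq BwB$ handles the third term; and $l_{b_1}r_{b_2}$ is the differential of the automorphism $g\mapsto b_1gb_2$ of $BwB$, so the middle term is tangent to $BwB$ at $x$ provided $\pist(\dot w)\in\wedge^2T_{\dot w}(BwB)$. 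This reduces the whole claim to the point $\dot w$, where $T_{\dot w}(BwB)=r_{\dot w}(\fb+\Ad_{\dot w}\fb)$ — the inclusion $\supseteq$ is visible from the curves $s\mapsto\exp(sX)\dot w\exp(sX')$, and equality follows from $\dim(\fb+\Ad_{\dot w}\fb)=\dim\fb+\ell(w)=\dim BwB$ — and $\pist(\dot w)=r_{\dot w}(\Ad_{\dot w}\Lambda_{\rm st}-\Lambda_{\rm st})$. Since $\Ad_{\dot w}$ permutes root spaces by $w$, carries $h_\alpha$ to $h_{w\alpha}$ and preserves $\langle\,,\,\rangle_\fg$, the relation $[\Ad_{\dot w}e_\alpha,\Ad_{\dot w}e_{-\alpha}]=h_{w\alpha}$ forces, for $\alpha>0$, $(\Ad_{\dot w}e_{-\alpha})\wedge(\Ad_{\dot w}e_\alpha)=e_{-\gamma}\wedge e_\gamma$ if $w\alpha=\gamma>0$ and $=-e_{-\gamma}\wedge e_\gamma$ if $w\alpha=-\gamma<0$; summing over $\alpha>0$ and sorting by the sign of $w\alpha$ (using $\langle w\alpha,w\alpha\rangle=\langle\alpha,\alpha\rangle$) yields
\[
\Ad_{\dot w}\Lambda_{\rm st}-\Lambda_{\rm st}=-2\sum_{\substack{\beta\in\Delta_+\\ w^{-1}\beta<0}}\tfrac{\langle\beta,\beta\rangle}{2}\,e_{-\beta}\wedge e_\beta .
\]
For each such $\beta$ one has $e_\beta\in\fu\subseteq\fb$, and $-\beta=w(-w^{-1}\beta)\in w\Delta_+$, so $e_{-\beta}\in\fg_{-\beta}\subseteq\Ad_{\dot w}\fu\subseteq\Ad_{\dot w}\fb$; hence each summand lies in $\wedge^2(\fb+\Ad_{\dot w}\fb)$. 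Applying $r_{\dot w}$ gives $\pist(\dot w)\in\wedge^2T_{\dot w}(BwB)$, which completes the proof.

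The main obstacle is the Poisson--Lie property of $B$: once it is in hand, the rest is a formal manipulation with multiplicativity plus bookkeeping of the inversion set of $w^{-1}$. I expect the cobracket computation above to be the cleanest route, though one can alternatively deduce the Poisson--Lie property of $B$ — and the displayed formula for $\Ad_{\dot w}\Lambda_{\rm st}-\Lambda_{\rm st}$ — from the $\Ad_G$-invariance of the Casimir element $t$ of $(\fg,\langle\,,\,\rangle_\fg)$, upon writing $\Lambda_{\rm st}=r_--r_+$ where $t=t_0+r_++r_-$ is its decomposition into Cartan and triangular parts and noting that $\Ad_{\dot w}$ fixes $t_0$. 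In any case it is the particular normalization of the invariant form that forces the obstruction to vanish.
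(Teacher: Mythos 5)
Your proof is correct, and it is worth noting that the paper itself gives no argument for this lemma: it is quoted from Lu--Mouquin \cite[Section 6.1]{Tleaves}, where the statement falls out of the classification of the symplectic leaves of $\pist$ --- each double Bruhat cell $G^{u,v}=BuB\cap B_-vB_-$ is a union of leaves, so $BwB=\bigsqcup_{v}G^{w,v}$ is automatically a Poisson subvariety, and the points of $T$ sit in zero-dimensional leaves. Your route is more elementary and self-contained: the vanishing on $T$ from $\Ad_t$-invariance of $\Lambda_{\rm st}$; the Poisson--Lie property of $B$ from the cobracket condition $\delta_{\rm st}(\fb)\subseteq\wedge^2\fb$, where the cancellation of the coefficient of $e_{-\mu}\wedge e_{\mu+\beta}$ via the Jacobi identity and the relation expressing $h_{\mu+\beta}$ in terms of $h_\mu,h_\beta$ checks out; and the reduction of the claim for $BwB$ to the single point $\dot w$ by multiplicativity, combined with the identification $T_{\dot w}(BwB)=r_{\dot w}(\fb+\Ad_{\dot w}\fb)$ (correct by the dimension count $\dim\fb+\ell(w)$) and the formula for $\Ad_{\dot w}\Lambda_{\rm st}-\Lambda_{\rm st}$ supported on the inversion set of $w^{-1}$, each of whose summands lies in $(\Ad_{\dot w}\fb)\wedge\fb$. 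What your argument buys over the citation is independence from the leaf classification and an explicit accounting of where the normalization $\langle\alpha,\alpha\rangle/2\in\{1,2,3\}$ enters, which is exactly what one needs to justify the lemma in the paper's characteristic-$p$ setting rather than only over $\CC$.
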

\begin{definition}
{\rm The projection of $\pist$ to $G/U$ is a well-defined Poisson structure by Proposition \ref{qp}, which is denoted by $\pi_{\sG/\sU}$ and called the standard Poisson structure on $G/U$.}
\end{definition}
Let $\lambda$ be the $T$-action on $G/U$ defined by
\[
T\times (G/U)\longrightarrow (G/U),\hs (t,\; g.U) \longmapsto tg.U, \hs t \in T, \, g \in G.
\]
\begin{lemma}\label{TPOISSON}
$(G/U,\pi_{\sG/\sU})$ is a $T$-Poisson variety.
\end{lemma}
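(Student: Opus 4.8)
The plan is to reduce the statement to the corresponding fact on $G$. Recall that $\pi_{\sG/\sU}$ is already a well-defined Poisson structure on $G/U$, namely $\pi_{\sG/\sU}=p_{\sG/\sU}(\pist)$, by Proposition~\ref{qp} applied to the coisotropic subgroup $U$ of $(G,\pist)$; and by the same proposition $p_{\sG/\sU}\colon(G,\pist)\to(G/U,\pi_{\sG/\sU})$ is a surjective Poisson submersion. So the only thing left to check is that for each $t\in T$ the automorphism $\lambda(t)\colon gU\mapsto tgU$ of $G/U$ preserves $\pi_{\sG/\sU}$.

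First I would show that the left translation $l_t\colon G\to G$ is a Poisson automorphism of $(G,\pist)$ for every $t\in T$. The Poisson structure $\pist$ is multiplicative --- this is immediate from the formula $\pist(g)=l_g\Lambda_{\rm st}-r_g\Lambda_{\rm st}$ of Definition~\ref{sdpoisson}, since left and right translations commute --- so $\pist(tg)=l_t\pist(g)+r_g\pist(t)$. By Lemma~\ref{submanifold} the Poisson structure $\pist$ vanishes on $T$, hence $\pist(t)=0$ and $\pist(tg)=l_t\pist(g)$; this is precisely the statement that $l_t$ is a Poisson map, and since $l_{t^{-1}}$ is its Poisson inverse, $l_t$ is a Poisson automorphism.

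Then I would push this down along $p_{\sG/\sU}$. The projection intertwines the two left translations, $p_{\sG/\sU}\circ l_t=\lambda(t)\circ p_{\sG/\sU}$; differentiating this identity at a point $g\in G$ and evaluating on the bivector $\pist(g)$ yields
\[
d\lambda(t)\bigl(\pi_{\sG/\sU}(gU)\bigr)=dp_{\sG/\sU}\bigl(l_t\pist(g)\bigr)=dp_{\sG/\sU}\bigl(\pist(tg)\bigr)=\pi_{\sG/\sU}(tgU).
\]
Since every point of $G/U$ is of the form $gU$, this says $\lambda(t)$ preserves $\pi_{\sG/\sU}$, and as before it is a Poisson automorphism. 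As the $T$-action $\lambda$ is algebraic, it follows that $(G/U,\pi_{\sG/\sU})$ is a $T$-Poisson variety. (Equivalently, one can invoke the general principle that in a Poisson homogeneous space of a Poisson Lie group $(G,\pist)$ any group element $g_0$ with $\pist(g_0)=0$ acts by a Poisson automorphism, and apply it to $g_0=t\in T$ using Lemma~\ref{submanifold}.)

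I do not expect a genuine obstacle here: the whole point is the vanishing $\pist|_T=0$ supplied by Lemma~\ref{submanifold}, which is exactly what separates left translation by $T$ from left translation by an arbitrary element of $G$ (the latter being Poisson only in the Poisson--Lie sense of the homogeneous-space action, not a Poisson automorphism of $\pi_{\sG/\sU}$). The only routine points to nail down are the well-definedness of $\lambda$ and the compatibility $p_{\sG/\sU}\circ l_t=\lambda(t)\circ p_{\sG/\sU}$, both of which are immediate.
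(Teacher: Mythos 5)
Your proof is correct and follows essentially the same route as the paper: both arguments hinge on the multiplicativity of $\pist$ (in the paper phrased as the Poisson homogeneous space identity on $G/U$ from Proposition \ref{qp}) together with the vanishing of $\pist$ on $T$ from Lemma \ref{submanifold}. The only cosmetic difference is that you verify $l_t$ is Poisson upstairs on $G$ and then push down along $p_{\sG/\sU}$, whereas the paper applies the homogeneous-space identity directly on $G/U$; your closing parenthetical is exactly the paper's argument.
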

\begin{proof}
It follows from Proposition \ref{qp} that $(G/U,\pi_{\sG/\sU})$ is a Poisson homogeneous space, i.e. for $t\in T$ and $g\in G$,
\[\pi_{\sG/\sU}(tg.U)=\sigma_{g.U}\pist(t)+\lambda_t\pi_{\sG/\sU}(g.U),\]
where
$\sigma_{g.U}: T\rightarrow G/U,\ g.U\mapsto t'g.U,\ \forall\ t'\in T,$
and
$\lambda_{t}: G/U\rightarrow G/U,\ g'.U\mapsto tg'.U,\ \forall\ g'\in G.$
 As $\pist$ vanishes on $T$, the left translation of $T$ on $G/U$ preserves the Poisson structure $\pi_{\sG/\sU}$, i.e.
\[\pi_{\sG/\sU}(tg.U)=\lambda_t\pi_{\sG/\sU}(g.U).\]
\end{proof}
\subsection{Proof of Theorem A}\label{T-mixed}
Let $G$ be a complex semi-simple algebraic group and follow the same notation as in \S\ref{stPoisson}.
It is well-known that $G$ has the Bruhat decomposition
\[
G=\bigsqcup_{w\in W}BwB.
\]
 As $U$ is a closed subgroup of $B$, $G/U$ also has the Bruhat decomposition
\[
G/U=\bigsqcup_{w\in W}BwB/U.
\]
For each $w\in W$, it follows from Lemma \ref{submanifold} that $BwB$ is a Poisson subvariety of $(G,\pist)$ and $BwB/U$ is a Poisson subvariety of
$(G/U,\pi_{\sG/\sU})$. For $w\in W$, define $U_w=U\cap wU_-w^{-1}$. If we fix a representative $\dot w\in N_\sG(T)$, any $g\in BwB$ can be uniquely decomposed as $g=tx\dot wn$ with $t\in T$, $x\in U_w$ and $n\in U$.
For any $w\in W$, consider the isomorphism on $BwB$ defined by
\begin{equation}
J_{\dot w}:\ BwB\rightarrow T\times (BwB/T),\ \ tx\dot{w}n\mapsto (t,x\dot wn.T).
\end{equation}
and the isomorphism on $BwB/U$ defined by
\begin{equation}\label{eqkappa}
\kappa_{\dot w}: BwB/U \rightarrow T\times (BwB/B),\hs tx\dot w.U\mapsto (t,xw.B).
\end{equation}
Recall from Proposition \ref{qp} that $\pist$ also projects to well-defined Poisson structures on $G/T$ as well as $G/B$ , denoted respectively by $\pi_{\sG/\sT}$ and $\pi_{\sG/\sB}$.
Let $\rho$ be the torus action defined by
\begin{equation}\label{rho}
T\times T\longrightarrow T, \;\;\; (t,t') \longmapsto tt'.
\end{equation}
When $Q=B$ or $T$, let $\lambda$ be the torus action defined by
\begin{equation}\label{lambda}
T\times (BwB/Q)\longrightarrow (BwB/Q), \;\;\; (t,g.Q) \longmapsto tg.Q.
\end{equation}
Let $(H_i)$ be an orthogonal basis of $(\fh,\langle,\rangle_\fg)$ and let
\[A_0=\sum_i H_i\otimes H_i.\]
Similar to the proof of Lemma \ref{TPOISSON}, we know that $(G/Q,\pi_{\sG/\sQ})$ is a $T$-Poisson variety.
Then, $0\bowtie_{A_0}\pi_{\sG/\sQ}$ defined in (\ref{TMix}) is a $\mathbb{T}$-mixed Poisson structure on $T\times (BwB/Q)$.

To prove Theorem A, we first prove the following Proposition \ref{mixedG}, which expresses $J_{\dot w}(\pist)$ as a $\mathbb{T}$-mixed Poisson structure on $T\times (BwB/T)$.
\begin{proposition}\label{mixedG}
The isomorphism
\begin{equation}
J_{\dot w}:\ (BwB,\pist)\rightarrow (T\times (BwB/T),0\bowtie_{A_0}\pi_{\sG/\sT})
\end{equation}
is Poisson.
\end{proposition}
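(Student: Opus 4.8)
The plan is to work in the factorization coordinates in which $J_{\dot w}$ becomes the identity map, and then check an equality of bivector fields. Parametrize $BwB$ by $T\times U_w\times U$ via $(t,x,n)\mapsto tx\dot w n$ (an isomorphism of varieties, the decomposition being the unique one recalled above) and $BwB/T$ by $U_w\times U$ via $(x,n)\mapsto x\dot w n.T$; then $J_{\dot w}$ is literally $(t,x,n)\mapsto(t;x,n)$, so the assertion becomes: $\pist$, written in the coordinates $(t,x,n)$, coincides with $0\bowtie_{A_0}\pi_{\sG/\sT}$ written in the coordinates $(t;x,n)$. First I would cut the variables down: since $\pist$ vanishes on $T$ (Lemma \ref{submanifold}), multiplicativity gives $\pist(tx\dot w n)=l_t\pist(x\dot w n)$, while $0\bowtie_{A_0}\pi_{\sG/\sT}$ is invariant under the $\rho$-action on the first factor (Lemma \ref{mixedpoisson1}) and $J_{\dot w}$ intertwines left $T$-translation on $BwB$ with that action; hence it suffices to prove the identity of bivectors at the points $h=x\dot w n$, i.e.\ where $t=e$.

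At such a point I would compute both sides explicitly. For $\pist$, use $\pist(h)=l_h\bigl(\Lambda_{\rm st}-\Ad_{h^{-1}}\Lambda_{\rm st}\bigr)$ and decompose $\Lambda_{\rm st}-\Ad_{h^{-1}}\Lambda_{\rm st}\in\wedge^2\fg$ against the left-trivialized tangent frame
\[
l_h^{-1}T_h(BwB)\;=\;\Ad_{h^{-1}}\fh\;\oplus\;\Ad_{(\dot w n)^{-1}}\fu_w\;\oplus\;\fu ,
\]
whose three summands are the $\partial_t$-, $\partial_x$- and $\partial_n$-directions respectively (here $\fu_w=\Lie(U_w)$); reading off the coefficients gives $\pist$ in coordinates. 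On the other side, $0\bowtie_{A_0}\pi_{\sG/\sT}=(0,\pi_{\sG/\sT})+\sum_i(\rho(H_i),0)\wedge(0,\lambda(H_i))$, where $\pi_{\sG/\sT}=q_*\pist$ for the projection $q\colon BwB\to BwB/T$ (Proposition \ref{qp}) and $\lambda(H_i)$ at $h.T$ is $q_*$ of the tangent vector of $s\mapsto\exp(sH_i)h$. Splitting both bivectors by bidegree in the $T$- and $BwB/T$-directions, the purely $BwB/T$-direction parts agree tautologically, both being $q_*\pist(h.T)=\pi_{\sG/\sT}(h.T)$. The content therefore lies in two points: (i) the pure $T$-direction ($\partial_t\wedge\partial_t$) part of $\pist(h)$ vanishes, equivalently the component of $\Ad_h\Lambda_{\rm st}$ in $\wedge^2\fh$ (with respect to the canonical splitting $\fg=\fh\oplus\fm$, $\fm=\bigoplus_\alpha\fg_\alpha$) is zero; and (ii) the mixed part of $\pist(h)$, which a priori lies in $(\partial_t\text{-directions})\wedge(\text{horizontal directions})$, equals $\sum_i\rho(H_i)\wedge\lambda(H_i)|_{h.T}$, i.e.\ precisely the $A_0=\sum_i H_i\otimes H_i$ term.

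Both (i) and (ii) reduce to computing the $\fh$-components of $\Ad_x\Ad_{\dot w}\Ad_n$ applied to the root vectors $E_{\pm\alpha}$, using that $\Ad_{\dot w}$ permutes the root spaces ($\Ad_{\dot w}\fg_\beta=\fg_{w\beta}$) while $\Ad_n$ and $\Ad_x$ are unipotent and raise roots; what forces the surviving terms to cancel in (i) and to recombine into $\sum_i H_i\otimes H_i$ in (ii) is the particular scalars $\frac{\langle\alpha,\alpha\rangle}{2}$ in $\Lambda_{\rm st}$, together with the Casimir identity $\sum_\alpha\frac{\langle\alpha,\alpha\rangle}{2}(e_\alpha\otimes e_{-\alpha}+e_{-\alpha}\otimes e_\alpha)=\mathrm{Cas}-\sum_i H_i\otimes H_i$. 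This root-theoretic bookkeeping is the main obstacle. To keep it finite I would organize it as an induction on $\ell(w)$: writing $w=s_\alpha w'$ with $\ell(w)=\ell(w')+1$ and using the multiplication map $P_\alpha\times^B Bw'B\to G$ together with the fact that $(P_\alpha,\pist)$ is a Poisson--Lie subgroup of $(G,\pist)$, one reduces to the inductive hypothesis for $w'$ and the rank-one computation inside $P_\alpha$, where (i) holds automatically because $\wedge^2\fh=0$ for $\mathrm{SL}_2$ and (ii) is a short direct check. Once the two $T$-direction bidegrees are pinned down, the matching of the remaining parts with $\pi_{\sG/\sT}$ is formal.
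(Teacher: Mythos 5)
Your reduction to the points $t=e$ is fine, but the two steps you then lean on are exactly where the proof is missing. First, the matching of the purely $BwB/T$-parts is not ``tautological'': in your coordinates the differential of the natural projection $q\colon g\mapsto g.T$ does \emph{not} kill the $T$-coordinate directions (it sends the tangent vector of $s\mapsto\exp(sH)h$ to $\lambda(H)|_{h.T}$), so the horizontal component of $\pist(h)$ is not by definition $q_*\pist(h)=\pi_{\sG/\sT}(h.T)$. That equality only follows \emph{after} (i) and (ii) are known, via $q=\mu\circ J_{\dot w}$ with $\mu$ the action map and the observation that the mixed term $\sum_i\rho(H_i)\wedge\lambda(H_i)$ pushes forward to $\sum_i\lambda(H_i)\wedge\lambda(H_i)=0$; as written the step is circular. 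Second, (i) and (ii) — the actual content — are not proved, and the reductions you propose are shaky. The quantity that must vanish in (i) is the $\wedge^2\fh$-component of $\Ad_h\Lambda_{\rm st}-\Lambda_{\rm st}$ with respect to the coordinate splitting $\fh\oplus\Ad_x\fu_w\oplus\Ad_h\fu$ of the right-trivialized tangent space; the last two summands are not sums of root spaces (e.g.\ $\Ad_h\fu$ has nontrivial $\fh$-components), so this is a different projection from the one along $\fg=\fh\oplus\fm$, and your ``equivalent'' reformulation needs its own proof. Likewise the induction on $\ell(w)$ is only a promise: the base case is not ``automatic because $\wedge^2\fh=0$ for $\mathrm{SL}_2$'' — the relevant Cartan is the full $\fh$ of $G$ (the torus of $P_\alpha$ has full rank) — and the inductive step through the multiplication map from $P_\alpha\times Bw'B$ requires setting up the quotient Poisson structure and tracking how the factorization $tx\dot w n$ behaves under it, none of which is sketched.

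For comparison, the paper's proof avoids all root-theoretic bookkeeping. The vanishing of the pure $T$-part is packaged as the statement that $q_{\dot w}\colon tx\dot wn\mapsto t$ is Poisson onto $(T,0)$, proved by factoring $q_{\dot w}=j_\sT\circ j_{\dot w}$ and using multiplicativity of $\pist$ together with the coisotropy of $\dot wU\cap U_-\dot w$, the Poisson--Lie subgroup property of $B$, the coisotropy of $U$ in $B$, and $\pist|_T=0$. The mixed and horizontal parts are then read off by passing to the Drinfeld double through the diagonal embedding and pushing forward the explicit $R$-matrix terms of $\Pist$ under $\Psi_{\dot w}$: only $(0,\pist)$ and $\sum_i(H_i^r,0)\wedge(0,H_i^r)$ survive, yielding $\pi_{\sG/\sT}$ and $\sum_i\rho(H_i)\wedge\lambda(H_i)$ directly. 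If you want to keep your direct computational route, you must actually carry out (i) and (ii); adopting the coisotropy argument for (i) and the double for (ii) is the way the paper discharges precisely the ``bookkeeping'' your sketch defers.
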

\begin{proof}
First, we prove that the map
\[q_{\dot w}: (BwB,\pist)\rightarrow (T,0),\ \ tx\dot{w}n\mapsto t\]
is Poisson.
For any $g\in BwB$ and a fixed $\dot w\in N_\sG(T)$, $g$ can be uniquely written as
\[g=tx\dot{w}n=tn'\dot{w}x'\]
for the same $t\in T$, $x\in U_w$, $x'\in U\cap w^{-1}U_{-}w$ and $n,n'\in U$.
So, we can write
\[q_{\dot w}=j_\sT\circ j_{\dot w},\]
where
\[j_{\dot w}: (BwB,\pist)\rightarrow (B,\pist),\ tn'\dot{w}x'\mapsto tn',\ t\in T,\ n'\in U,\ x'\in U\cap w^{-1}U_-w,\]
and
\[j_\sT:(B,\pist)\rightarrow (T,0),\ tn'\mapsto t,\ t\in T, n'\in U.\]
It is sufficient to prove that $j_{\dot w}$ and $j_\sT$ are Poisson.

By the multiplicativity of $\pist$, one has
\[
\pist(tx\dot{w}n)=\pist(tn'\dot{w}x')=l_{tn'}\pist(\dot w x')+r_{\dot w x'}\pist(tn'),
\]
where $r_g$ denotes the right translation on $G$ and $l_h$ denotes the left translation on $G$.
On one hand, for a fixed $tn'\in B$, all the elements in $\dot wU\cap U_-\dot w$ go to the same point $tn'$ via $j_{\dot w}\circ l_{tn'}$, i.e. for any $\dot w x'\in \dot wU\cap U_-\dot w$
\[j_{\dot w}l_{tn'}(\dot w x')=tn'.\]
As $\dot wU\cap U_-\dot w$ is coisotropic in $(G,\pist)$, (see \cite[Lemma 10]{groupoid})
one has
\[j_{\dot w}l_{tn'}\pist(\dot wx')=0.\]
On the other hand, for a fixed $\dot w\in N_\sG(T)$ and $x'\in U\cap w^{-1}U_-w$, $j_{\dot w}\circ r_{\dot w x'}$ fixes $B$ pointwise, i.e. for any $tn'\in B$,
\[j_{\dot w}\circ r_{\dot w x'}(tn')=tn'.\]
As $B$ is a Poisson Lie subgroup of $(G,\pist)$, one has
\[j_{\dot w}(\pist(tn'\dot{w}x'))=j_{\dot w}r_{\dot w x'}\pist(tn')=\pist(tn').
\]
Moreover, as $U$ is a coisotropic subgroup of $(B,\pist)$ and $\pist$ vanishes on $T$, one has
\[j_{\sT}\pist(tn')=j_{\sT}l_t\pist(n')=0.\]
Therefore, $j_{\dot w}$ and $j_{\sT}$ are Poisson maps.

Then, we study the mixed terms.
Define
\[\Psi_{\dot w}: (BwB)\times G \rightarrow T\times (G/T),\ \ \ (tx\dot wn,g)\mapsto (t,g.T).\]
Recall from Lemma \ref{PE} that $(BwB,\pist)$ is Poisson isomorphic to $((BwB)_\Delta,\Pist)$,
where
 \[(BwB)_\Delta=\{(g,g):g\in BwB\}\subset G_\Delta.\]
 We can identify $J_{\dot w}\pist$ with $\Psi_{\dot w}(\Pist)$.
By the definition of $\Pist$, write
\begin{equation}
\Pist=(\pist,0)+(0,\pist)+\mu_1+\mu_2+\mu_3+\mu_4,
\end{equation}
where,
\[\mu_1=2\sum_{\alpha\in\Delta_+}(E^r_\alpha,0)\wedge(0,E^r_{-\alpha}),\ \ \ \mu_2=-2\sum_{\alpha\in\Delta_+}(E^l_\alpha,0)\wedge(0,E^l_{-\alpha})\]
and
\[\mu_3=\sum^n_{i=1}(H_i^r,0)\wedge(0,H_i^r),\ \ \  \mu_4=-\sum^n_{i=1}(H_i^l,0)\wedge(0,H_i^l).\]
Since the vector fields $E^l_\alpha$, $E^r_\alpha$ vanish when projected to $T$ and $H_i^l$ on $BwB$ vanishes when projected to $BwB/T$,
one has $\Psi_{\dot{w}}(\mu_1)=\Psi_{\dot w}(\mu_2)=\Psi_{\dot w}(\mu_4)=0$.
Additionally, one has
\[\Psi_{\dot w}(\mu_3)
=\sum_i(\rho(H_i),0)\wedge(0,\lambda(H_i)),
\]
where $\rho$ and $\lambda$ are Lie algebra actions induced by $(\ref{rho})$ and $(\ref{lambda})$ respectively.
Therefore, $J_{\dot w}\pist=0\bowtie_{A_0}\pi_{\sG/\sT}$.
\end{proof}

Furthermore, the following theorem expresses $\kappa_{\dot w}(\pi_{\sG/\sU})$ as a $\mathbb{T}$-mixed Poisson structure on $T\times (BwB/B)$.
\begin{theorem}\label{mixedGU}
The isomorphism
\[\kappa_{\dot w}:(BwB/U,\pi_{\sG/\sU})\rightarrow (T\times (BwB/B),0\bowtie_{A_0}\pi_{\sG/\sB})\]
is Poisson.
\end{theorem}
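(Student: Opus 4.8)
The plan is to descend the isomorphism of Proposition \ref{mixedG} along the quotient by $U$. Write $p\colon BwB\to BwB/U$ for the restriction to $BwB$ of the projection $G\to G/U$ and $q\colon BwB/T\to BwB/B$ for the restriction to $BwB/T$ of the projection $G/T\to G/B$. First I would record that the square with top edge $J_{\dot w}$, bottom edge $\kappa_{\dot w}$, left edge $p$ and right edge $\mathrm{id}_T\times q$ commutes: if $g=tx\dot wn$ with $t\in T$, $x\in U_w$, $n\in U$, then $g.U=tx\dot w.U$, so $\kappa_{\dot w}(g.U)=(t,xw.B)$, whereas $J_{\dot w}(g)=(t,x\dot wn.T)$ maps under $\mathrm{id}_T\times q$ to $(t,x\dot w.B)=(t,xw.B)$ because $n\in U\subset B$. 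Thus $(\mathrm{id}_T\times q)\circ J_{\dot w}=\kappa_{\dot w}\circ p$.

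Second, I would verify that $p$ and $q$ are Poisson and identify the various pushforwards. By Lemma \ref{submanifold}, $BwB$ is a Poisson subvariety of $(G,\pist)$; it is stable under right multiplication by $U$, hence $p$ realizes $BwB/U$ as the Poisson quotient and $\pi_{\sG/\sU}|_{BwB/U}=p_*(\pist|_{BwB})$. Running the same argument with $T$ and with $B$ in place of $U$ gives $\pi_{\sG/\sT}|_{BwB/T}=(p_{\sG/\sT})_*(\pist|_{BwB})$ and $\pi_{\sG/\sB}|_{BwB/B}=(p_{\sG/\sB})_*(\pist|_{BwB})$; since $p_{\sG/\sB}$ factors through $p_{\sG/\sT}$ via $q$, transitivity of pushforward yields $q_*(\pi_{\sG/\sT}|_{BwB/T})=\pi_{\sG/\sB}|_{BwB/B}$. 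Moreover $q$ is equivariant for the left-translation $T$-actions, so it intertwines the generating vector field of each $H_i\in\fh$ on $BwB/T$ with that on $BwB/B$. Combining these facts with the observation that $\mathrm{id}_T\times q$ acts as the identity on the $T$-factor, and reading off the definition (\ref{TMix}) of the $\mathbb{T}$-mixed structure term by term, I obtain
\[
(\mathrm{id}_T\times q)_*\bigl(0\bowtie_{A_0}\pi_{\sG/\sT}\bigr)=0\bowtie_{A_0}\pi_{\sG/\sB}.
\]

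Finally, I would assemble these ingredients:
\[
(\kappa_{\dot w})_*\bigl(\pi_{\sG/\sU}|_{BwB/U}\bigr)=(\kappa_{\dot w})_*p_*(\pist|_{BwB})=(\mathrm{id}_T\times q)_*(J_{\dot w})_*(\pist|_{BwB})=(\mathrm{id}_T\times q)_*\bigl(0\bowtie_{A_0}\pi_{\sG/\sT}\bigr)=0\bowtie_{A_0}\pi_{\sG/\sB},
\]
where the second equality is the commuting square of the first step and the third equality is Proposition \ref{mixedG}. Since $\kappa_{\dot w}$ is an isomorphism, this is precisely the statement that it is a Poisson isomorphism. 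I expect the only real content to lie in the second step — checking that the $\mathbb{T}$-mixed Poisson structure is compatible with the $T$-equivariant Poisson quotient $q$ (in particular that the cross term $\sum_i(\rho(H_i),0)\wedge(0,\lambda(H_i))$ pushes forward to the corresponding cross term on $BwB/B$); the rest is formal bookkeeping with pushforwards of bivector fields along surjective submersions, relying on the commuting square of the first step.
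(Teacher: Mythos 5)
Your proposal is correct and follows essentially the same route as the paper: the paper's proof is exactly this descent of Proposition \ref{mixedG} along the $U$-quotient via the commuting square $(\mathrm{id}_T\times q)\circ J_{\dot w}=\kappa_{\dot w}\circ p$, written there very tersely (with an abuse of notation and a typo $0\bowtie_{A_0}\pi_{\sG/\sU}$ where $0\bowtie_{A_0}\pi_{\sG/\sB}$ is meant). Your second step, checking that $\mathrm{id}_T\times q$ pushes the mixed structure $0\bowtie_{A_0}\pi_{\sG/\sT}$ to $0\bowtie_{A_0}\pi_{\sG/\sB}$ term by term, simply makes explicit what the paper leaves implicit.
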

\begin{proof}
Let $p_{\sG/\sU}$ be the natural projection from $G$ to $G/U$. When restricted to $BwB$, one has
\[J_{\dot w} \circ p_{\sG/\sU}=p_{\sG/\sU}\circ J_{\dot w}.\]
Therefore, on each $BwB/U$, one has
\[\kappa_{\dot w}(\pi_{\sG/\sU})=J_{\dot w} \circ p_{\sG/\sU}(\pist)=p_{\sG/\sU}(J_{\dot w}(\pist))=0\bowtie_{A_0}\pi_{\sG/\sU}.\]
\end{proof}
\subsection{Proof of Theorem B}
In \cite[Section 1.5]{Tleaves}, J.-H. Lu and V. Mouquin develop a general theory to study the $\mathbb{T}$-leaves of a class of $\mathbb{T}$-Poisson manifolds. Let $G$ be a simply-connected complex semi-simple group.
The general theory is applied to study the $T$-leaves of $(G/B,\pi_{\sG/\sB})$ in \cite[Theorem 1.1]{Tleaves}.
\begin{proposition}\label{TleavesGB}\cite[Theorem 1.1]{Tleaves}
The $T$-leaf decomposition of $(G/B,\pi_{\sG/\sB})$ coincides with the disjoint union of open Richardson varieties
\[G/B=\bigsqcup_{v\leq u}(BuB/B\cap B_-vB/B).\]
\end{proposition}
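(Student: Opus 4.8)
The plan is to identify each open Richardson variety $\mathcal{R}_{u,v}:=BuB/B\cap B_-vB/B$ (nonempty precisely when $v\le u$) with a single $T$-leaf of $\pi_{\sG/\sB}$; since $G/B=\bigsqcup_{v\le u}\mathcal{R}_{u,v}$, this is exactly the assertion. The argument has two parts: first, that every $\mathcal{R}_{u,v}$ is a $T$-Poisson subvariety of $(G/B,\pi_{\sG/\sB})$, so the $T$-leaf decomposition refines $\bigsqcup_{v\le u}\mathcal{R}_{u,v}$; and second, that inside each (irreducible) $\mathcal{R}_{u,v}$ the image of $\pi_{\sG/\sB}^{\sharp}$ together with the tangent directions of the $T$-orbits spans the whole tangent space at every point, which forces $\mathcal{R}_{u,v}$ to be one $T$-leaf. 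A preliminary observation used throughout is that $\pi_{\sG/\sB}$ vanishes at every $T$-fixed point $\dot w B$: projecting $\pist(\dot w)=l_{\dot w}\Lambda_{\rm st}-r_{\dot w}\Lambda_{\rm st}$ along $G\to G/B$, the left-translation term dies because $e_\alpha\in\fb$ for all $\alpha>0$, and the right-translation term becomes $-\sum_{\alpha>0}\tfrac{\langle\alpha,\alpha\rangle}{2}\,\xi_{-\alpha}\wedge\xi_\alpha$ evaluated at $\dot w B$, where $\xi_{\pm\alpha}$ is the vector field on $G/B$ generated by $e_{\pm\alpha}$ under the left $G$-action; for each $\alpha>0$ one of the two legs vanishes at $\dot w B$ (namely $\xi_{-\alpha}(\dot w B)=0$ unless $w^{-1}\alpha>0$, while $\xi_\alpha(\dot w B)=0$ unless $w^{-1}\alpha<0$), so each wedge, and hence the whole sum, is zero.

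For the first part, $BuB/B$ is a Poisson subvariety of $(G/B,\pi_{\sG/\sB})$ because $BuB$ is a Poisson subvariety of $(G,\pist)$ by Lemma \ref{submanifold} and $G\to G/B$ is a Poisson submersion by Proposition \ref{qp}. For the opposite cells, $B_-$ is a Poisson-Lie subgroup of $(G,\pist)$: the standard Poisson structure built from the opposite Borel equals $-\pist$, and $B_-$ is one of its standard Poisson-Lie Borel subgroups exactly as $B$ is a Poisson-Lie subgroup of $(G,\pist)$; hence $B_-$ is a Poisson-Lie subgroup of $(G,\pist)$ as well. Using that $(G/B,\pi_{\sG/\sB})$ is a Poisson homogeneous space of $(G,\pist)$ (Proposition \ref{qp}; the homogeneous-space identity for $G/B$ is as in the proof of Lemma \ref{TPOISSON}) together with the vanishing at the base point, for $b_-\in B_-$ we get $\pi_{\sG/\sB}(b_-\dot v B)=(a_{\dot v B})_{*}\pist(b_-)$, where $a_{\dot v B}\colon G\to G/B,\ g\mapsto g\dot v B$; since $\pist(b_-)\in\wedge^2 T_{b_-}B_-$ and $a_{\dot v B}$ maps $B_-$ into $B_-\dot v B/B$, this bivector is tangent to $B_-vB/B$, so $B_-vB/B$ is a Poisson subvariety too. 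Now $\mathcal{R}_{u,v}$ is visibly $T$-stable, and at any $x\in\mathcal{R}_{u,v}$ the bivector $\pi_{\sG/\sB}(x)$ lies in both $\wedge^2 T_x(BuB/B)$ and $\wedge^2 T_x(B_-vB/B)$. Because Schubert cells meet opposite Schubert cells transversally — the open Richardson variety is nonsingular of the expected dimension $\ell(u)-\ell(v)$ — one has $T_x\mathcal{R}_{u,v}=T_x(BuB/B)\cap T_x(B_-vB/B)$, and the elementary fact that a bivector lying in $\wedge^2 W_1$ and in $\wedge^2 W_2$ already lies in $\wedge^2(W_1\cap W_2)$ (its associated map $V^{*}\to V$ has image inside $W_1\cap W_2$, and a bivector lies in $\wedge^2$ of the image of that map) gives $\pi_{\sG/\sB}(x)\in\wedge^2 T_x\mathcal{R}_{u,v}$. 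Hence each $\mathcal{R}_{u,v}$ is a $T$-Poisson subvariety.

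For the second part — where the real work lies — I want that for every $x\in\mathcal{R}_{u,v}$ the subspace $\mathrm{Im}\,\pi_{\sG/\sB}^{\sharp}(x)+\mathfrak{t}\cdot x\subseteq T_x\mathcal{R}_{u,v}$ is all of $T_x\mathcal{R}_{u,v}$. Granting this, every $T$-leaf contained in $\mathcal{R}_{u,v}$ is open, hence also closed (the $T$-leaves partition $\mathcal{R}_{u,v}$), so by irreducibility $\mathcal{R}_{u,v}$ is a single $T$-leaf, which completes the proof. I would establish the rank identity by working in an explicit affine chart on $\mathcal{R}_{u,v}$ — for instance the Deodhar/Marsh--Rietsch parametrization, or coordinates obtained from a reduced word for $u$ together with the position of $v$ inside it — in which $\pi_{\sG/\sB}$ has a well-understood leading (log-canonical) term, so the rank of $\pi_{\sG/\sB}^{\sharp}$ and the directions generated by $\mathfrak{t}$ can be read off and shown to fill $T_x\mathcal{R}_{u,v}$; alternatively, one feeds $(G/B,\pi_{\sG/\sB})$ into the general theory of $\mathbb{T}$-leaves of \cite[Section 1.5]{Tleaves}, which was built precisely to carry out this kind of computation. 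The first part and the vanishing lemma are essentially formal; the genuine obstacle is this last verification that the characteristic-plus-torus distribution has full rank everywhere on $\mathcal{R}_{u,v}$.
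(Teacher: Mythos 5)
The paper does not prove this proposition at all: it is imported verbatim as \cite[Theorem 1.1]{Tleaves}, so there is no internal argument to compare yours against. Judged on its own terms, your proposal has a genuine gap. The first half (each open Richardson variety $\mathcal{R}_{u,v}$ is a $T$-Poisson subvariety, so the $T$-leaf decomposition refines $\bigsqcup_{v\le u}\mathcal{R}_{u,v}$) is essentially sound: the vanishing of $\pi_{\sG/\sB}$ at the $T$-fixed points, the Poisson-subvariety property of both families of cells, the transversality $T_x\mathcal{R}_{u,v}=T_x(BuB/B)\cap T_x(B_-vB/B)$, and the linear-algebra fact about bivectors in $\wedge^2 W_1\cap\wedge^2 W_2$ are all correct and give the ``$\subseteq$'' half of the identification. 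But the second half --- that $\mathrm{Im}\,\pi_{\sG/\sB}^{\sharp}(x)+\mathfrak t\cdot x$ exhausts $T_x\mathcal{R}_{u,v}$ at every point, which you yourself flag as ``where the real work lies'' --- is never carried out. You offer two routes: an explicit rank computation in a Deodhar/Marsh--Rietsch chart, or ``feeding $(G/B,\pi_{\sG/\sB})$ into the general theory of \cite[Section 1.5]{Tleaves}.'' The first is precisely the nontrivial content of the cited theorem and would require actually exhibiting the local form of $\pi_{\sG/\sB}$ on $\mathcal{R}_{u,v}$ and verifying nondegeneracy of a concrete skew-symmetric matrix at every point, not just generically; the second is circular, since applying that machinery to $G/B$ \emph{is} the proof of \cite[Theorem 1.1]{Tleaves}. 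As written, the proposal is a correct reduction of the theorem to its hardest step, plus an IOU for that step.

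A smaller point worth recording: your deduction ``each leaf is open, hence closed, hence equals $\mathcal{R}_{u,v}$ by irreducibility'' is fine once full rank is known, but it silently uses that the distribution spanned by the Hamiltonian vector fields together with the generators of the $T$-action is integrable with the $T$-leaves as its integral manifolds; you should either cite this (it is \cite[Section 2.2]{Tleaves}) or say a word about it, since $T$-leaves are defined as $\bigsqcup_{t\in T}t\Sigma$ for a symplectic leaf $\Sigma$ rather than directly as integral manifolds of that distribution.
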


However, the general theory cannot be applied to study the $T$-leaves of $(G/U,\pi_{\sG/\sU})$. Based on the work in \S\ref{T-mixed}, we give a proof of Theorem B.
First, we recall the following Proposition \ref{Tmix} proved by J.-H. Lu and Y. Mi in \cite[Lemma 2.23]{Integrablesystems}.
\begin{proposition}\label{Tmix}
Let $\mathbb T$ be a complex torus and $(X,\pi_X)$ a complex $\mathbb T$-Poisson manifold. Let $\ft$ be the Lie algebra of $\mathbb T$ and $A\in \ft\otimes \ft$. With respect to the diagonal action of $\mathbb T$ on $\mathbb T\times X$ , the $\mathbb T$-leaves of the
Poisson structure $0\bowtie_AX$ in $\mathbb T\times X$ are precisely all the $\mathbb T\times L$, where $L$ is a $\mathbb T$-leaf of $\pi_X$ in $X$.
\end{proposition}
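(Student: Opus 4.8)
The plan is to understand the symplectic and $\mathbb{T}$-leaves of $\Pi:=0\bowtie_A\pi_X$ through the projection $\mathrm{pr}_X\colon\mathbb{T}\times X\to X$, and then to verify at the level of tangent spaces that each $\mathbb{T}\times L$ is a single $\mathbb{T}$-leaf. Throughout, let $\rho$ and $\lambda$ denote the infinitesimal actions of $\ft$ on $\mathbb{T}$ (by translation) and on $X$, so that the diagonal $\mathbb{T}$-action on $\mathbb{T}\times X$ is generated by the vector fields $\delta(b)=(\rho(b),\lambda(b))$, $b\in\ft$.

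\emph{Step 1: reduction.} First I would check that $\mathrm{pr}_X$ is a $\mathbb{T}$-equivariant Poisson submersion: it is manifestly an equivariant submersion, and since the vector fields $(\rho(a_k),0)$ are tangent to the fibres of $\mathrm{pr}_X$, the pushforward of each mixed term $(\rho(a_k),0)\wedge(0,\lambda(b_k))$ in (\ref{TMix}) vanishes while $(0,\pi_X)$ pushes forward to $\pi_X$; hence $(\mathrm{pr}_X)_*\Pi=\pi_X$. Now a Poisson map carries each connected symplectic leaf into a symplectic leaf, and an equivariant map carries each $\mathbb{T}$-orbit onto a $\mathbb{T}$-orbit, so $\mathrm{pr}_X$ carries every $\mathbb{T}$-leaf of $\Pi$ into a $\mathbb{T}$-leaf of $\pi_X$; therefore each $\mathbb{T}$-leaf of $\Pi$ lies in $\mathrm{pr}_X^{-1}(L)=\mathbb{T}\times L$ for a unique $\mathbb{T}$-leaf $L$ of $\pi_X$. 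Conversely, for $(s,x)\in\mathbb{T}\times L$ the symplectic leaf of $\Pi$ through $(s,x)$ is contained in the $\mathrm{pr}_X$-preimage of the symplectic leaf of $\pi_X$ through $x$, hence in $\mathbb{T}\times L$; so $\mathbb{T}\times L$ is saturated by symplectic leaves of $\Pi$ and, being $\mathbb{T}$-invariant, is a disjoint union of $\mathbb{T}$-leaves of $\Pi$. It remains to prove that it is a \emph{single} $\mathbb{T}$-leaf.

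\emph{Step 2: $\mathbb{T}\times L$ is one $\mathbb{T}$-leaf.} Here I would use that, for a $\mathbb{T}$-Poisson variety, the tangent space at a point $q$ to the $\mathbb{T}$-leaf through $q$ is $\mathrm{Im}(\pi^{\#}_q)+\ft\cdot q$, where $\ft\cdot q$ is the span of the generating vector fields at $q$ (see \cite[Section 2.2]{Tleaves}); applied to $\pi_X$ this gives $T_xL=\mathrm{Im}((\pi_X)^{\#}_x)+\lambda(\ft)_x$ for $x\in L$. Expanding $\Pi^{\#}$ at $(s,x)$ from (\ref{TMix}) yields, for $(\xi,\eta)\in T_s^*\mathbb{T}\oplus T_x^*X$, an expression of the shape $\Pi^{\#}(\xi,\eta)=\bigl(-\rho(\alpha(\eta))_s,\ (\pi_X)^{\#}_x(\eta)+\lambda(\beta(\xi))_x\bigr)$, where $\alpha\colon T_x^*X\to\ft$ and $\beta\colon T_s^*\mathbb{T}\to\ft$ are linear maps built from $A$ and the two torus actions. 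I would then compute
\[
\mathrm{Im}(\Pi^{\#}_{(s,x)})+\delta(\ft)_{(s,x)}
\]
and show it equals $T_s\mathbb{T}\oplus T_xL$ at every $(s,x)\in\mathbb{T}\times L$, combining the identity for $T_xL$ above with the non-degeneracy of the pairing on $\ft$ determined by $A$ (which holds for the element $A_0=\sum_i H_i\otimes H_i$ relevant to our applications), the latter being what lets one isolate the ``pure $\mathbb{T}$'' directions $(v,0)$ out of the mixed terms. Granting this, since $L=\mathbb{T}\Sigma$ for a connected symplectic leaf $\Sigma$ of $\pi_X$ the set $\mathbb{T}\times L$ is connected, and it is partitioned into $\mathbb{T}$-leaves of $\Pi$ each of which, by the computation at all points of $\mathbb{T}\times L$, is an immersed submanifold whose tangent space agrees with that of $\mathbb{T}\times L$ everywhere, hence is open; a connected manifold that is a disjoint union of nonempty open subsets has only one, so $\mathbb{T}\times L$ is a single $\mathbb{T}$-leaf of $\Pi$, as desired.

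\emph{Main obstacle.} The crux is the tangent-space computation in Step 2: one must show that the Hamiltonian directions of $\Pi$ together with the diagonal torus directions span the full product $T_s\mathbb{T}\oplus T_xL$. This genuinely requires the interaction of the cross-terms coming from $A$ with the torus action $\lambda$ and the symplectic foliation of $\pi_X$; for instance when $A=0$ the diagonal action ties the $\mathbb{T}$-factor to $X$ and the sum above is strictly smaller than $T_s\mathbb{T}\oplus T_xL$, so the argument must use the structure of $A$ in an essential way. The remaining ingredients — that $\mathrm{pr}_X$ is an equivariant Poisson submersion, the saturation of $\mathbb{T}\times L$ by leaves, and the connectedness bookkeeping — are routine.
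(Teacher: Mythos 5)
The paper does not actually prove Proposition \ref{Tmix}; it quotes it from \cite[Lemma 2.23]{Integrablesystems}, so there is no internal proof to compare against and I am judging your argument on its own terms. Your overall strategy --- project to $X$ to trap each $\mathbb{T}$-leaf of $\Pi:=0\bowtie_A\pi_X$ inside some $\mathbb{T}\times L$, then show each $\mathbb{T}\times L$ is a single leaf by a pointwise tangent-space computation plus connectedness --- is the right one, but two things need repair. First, the general principle invoked in Step 1, that a Poisson map carries a symplectic leaf into a symplectic leaf, is false: the projection $(\mathbb{C}^2,\partial_x\wedge\partial_y)\to(\mathbb{C},0)$ is Poisson and maps the unique two-dimensional leaf onto a union of point leaves. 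What you should use instead is exactly the formula you write in Step 2: the $X$-component of $\Pi^{\#}_{(s,x)}(\xi,\eta)$ is $(\pi_X)^{\#}_x(\eta)+\lambda(\beta(\xi))_x\in\mathrm{Im}(\pi_X)^{\#}_x+\lambda(\ft)_x=T_xL$, and the generating vector fields of the diagonal action also project into $T_xL$, so the distribution defining the $\mathbb{T}$-leaves of $\Pi$ projects into the $\mathbb{T}$-leaf distribution of $\pi_X$; this yields Step 1 without the false lemma.

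Second, the crux of Step 2 is left at the level of ``granting this''. It should be carried out, and it is short: writing $A^{\#}\colon\ft^{*}\to\ft$, $\xi\mapsto\sum_k\langle\xi,a_k\rangle b_k$, one has $\Pi^{\#}(\xi,0)=(0,\lambda(A^{\#}\xi)_x)$; if $A^{\#}$ is surjective this produces $\{0\}\oplus\lambda(\ft)_x$, subtracting these from $\delta(b)=(\rho(b)_s,\lambda(b)_x)$ produces $T_s\mathbb{T}\oplus\{0\}$, and then $\Pi^{\#}(0,\eta)$ modulo its (now available) $\mathbb{T}$-component produces $\{0\}\oplus\mathrm{Im}(\pi_X)^{\#}_x$; combined with the inclusion from Step 1 this gives $\mathrm{Im}\,\Pi^{\#}_{(s,x)}+\delta(\ft)_{(s,x)}=T_s\mathbb{T}\oplus T_xL$ as required. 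You are entirely right that this needs nondegeneracy of $A$ and that the statement as literally quoted, for arbitrary $A\in\ft\otimes\ft$, is false: take $A=0$, $\mathbb{T}=X=\CC^{\times}$ acting by multiplication, $\pi_X=0$; then $0\bowtie_{0}0=0$ and its $\mathbb{T}$-leaves for the diagonal action are the one-dimensional orbits, not $\mathbb{T}\times X$. The hypothesis holds for $A_0=\sum_i H_i\otimes H_i$ used in the paper, but it is a genuine missing assumption in the statement, and your proof must invoke it explicitly. With the false principle in Step 1 replaced by the direct computation and the Step 2 computation actually executed under surjectivity of $A^{\#}$, the argument is complete.
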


\begin{theorem}\label{TheoremC}
The decomposition of $G/U$ into $T$-leaves of the Poisson structure $\pi_{\sG/\sU}$ is
\begin{equation}\label{TleaveGU}
G/U=\bigsqcup_{v\leq u}(BuB/U\cap B_-vB/U).
\end{equation}
\end{theorem}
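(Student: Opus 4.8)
The plan is to combine the structural result of Theorem~A (Theorem~\ref{mixedGU}) with the abstract description of $\mathbb{T}$-leaves of a $\mathbb{T}$-mixed Poisson structure in Proposition~\ref{Tmix}, and then to transport the known $T$-leaf decomposition of $(G/B,\pi_{\sG/\sB})$ from Proposition~\ref{TleavesGB} back to $G/U$. The first step is to note that the Bruhat decomposition $G/U = \bigsqcup_{w\in W} BwB/U$ breaks the problem into pieces: each $BwB/U$ is a $T$-stable Poisson subvariety of $(G/U,\pi_{\sG/\sU})$, so a $T$-leaf of $\pi_{\sG/\sU}$ contained in $BwB/U$ is the same thing as a $T$-leaf of the restricted Poisson structure, and conversely every $T$-leaf of $\pi_{\sG/\sU}$ lies entirely inside one Bruhat cell $BwB/U$. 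Thus it suffices to describe the $T$-leaves of $(BwB/U,\pi_{\sG/\sU})$ for each fixed $w$.

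Next I would apply Theorem~\ref{mixedGU}: the isomorphism $\kappa_{\dot w}\colon (BwB/U,\pi_{\sG/\sU}) \to (T\times(BwB/B), 0\bowtie_{A_0}\pi_{\sG/\sB})$ is Poisson and, by construction, $T$-equivariant with respect to the left-translation $T$-action on the source and the diagonal $T$-action (via $\rho$ on the $T$-factor and $\lambda$ on $BwB/B$) on the target. Here one must check that $\kappa_{\dot w}$ intertwines the left translation action of $T$ on $BwB/U$ with the diagonal action on $T\times(BwB/B)$ — this is a direct computation from the formula $tx\dot w.U \mapsto (t, xw.B)$, since left-multiplying by $t_0\in T$ sends $tx\dot w.U$ to $(t_0 t)(t^{-1}t_0^{-1} t_0 t x)\dot w.U$, and conjugating the unipotent part back into $U_w$ shows the $BwB/B$-component transforms by $\lambda$. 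Granting this equivariance, Proposition~\ref{Tmix} (with $X = BwB/B$, $\pi_X = \pi_{\sG/\sB}|_{BwB/B}$, $A = A_0$) tells us that the $T$-leaves of $0\bowtie_{A_0}\pi_{\sG/\sB}$ on $T\times(BwB/B)$ are exactly the sets $T\times L$ where $L$ runs over $T$-leaves of $\pi_{\sG/\sB}$ inside $BwB/B$.

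Then I would invoke Proposition~\ref{TleavesGB}: the $T$-leaves of $(G/B,\pi_{\sG/\sB})$ are the open Richardson varieties $BuB/B \cap B_-vB/B$ with $v\le u$; intersecting with the cell $BwB/B$ picks out precisely those with $u = w$, namely $BwB/B \cap B_-vB/B$ for $v\le w$. Pulling back through $\kappa_{\dot w}^{-1}$, the $T$-leaves of $(BwB/U,\pi_{\sG/\sU})$ are the preimages $\kappa_{\dot w}^{-1}\bigl(T\times (BwB/B\cap B_-vB/B)\bigr)$. The final bookkeeping step is to identify this preimage with $BwB/U \cap B_-vB/U$: since $\kappa_{\dot w}$ is an isomorphism onto $T\times(BwB/B)$ under which the projection $BwB/U \to BwB/B$ corresponds to the second projection, the preimage of $T\times(\,\cdot\,)$ is just the preimage under $p_{\sB/\sU}\colon BwB/U\to BwB/B$ of $BwB/B\cap B_-vB/B$, and one checks via the Bruhat/Birkhoff compatibility that this equals $BwB/U\cap B_-vB/U$. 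Taking the union over all $w\in W$ and all $v\le w$ yields the stated decomposition $G/U = \bigsqcup_{v\le u}(BuB/U\cap B_-vB/U)$.

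The main obstacle I anticipate is the last identification: checking that $p_{\sB/\sU}^{-1}(BuB/B\cap B_-vB/B) = BuB/U\cap B_-vB/U$, i.e.\ that the $B_-$-orbit stratification on $G/U$ is the pullback of the one on $G/B$ along the $T$-bundle $G/U\to G/B$. This should follow from the fact that $B_-vB/U$ is $U$-saturated on the right (equivalently $B_- v B$ is right $U$-invariant, which it is, being a union of $B_-$–$B$ double cosets), but one has to be careful that no collapsing occurs. The equivariance check for $\kappa_{\dot w}$ is routine but needs to be stated cleanly since Proposition~\ref{Tmix} requires the diagonal $T$-action, not an arbitrary one; everything else is formal manipulation of the already-established Theorem~A and the cited propositions.
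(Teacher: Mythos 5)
Your overall route is exactly the paper's: reduce via the Bruhat decomposition to a single cell $BwB/U$, use Theorem~\ref{mixedGU} to pass to the mixed structure $0\bowtie_{A_0}\pi_{\sG/\sB}$ on $T\times(BwB/B)$, combine Proposition~\ref{Tmix} with the Richardson-variety description of the $T$-leaves of $(G/B,\pi_{\sG/\sB})$ in Proposition~\ref{TleavesGB}, and pull back through $\kappa_{\dot w}^{-1}$; the paper's proof is precisely this, stated more tersely, and your final identification $\kappa_{\dot w}^{-1}\bigl(T\times(BwB/B\cap B_-vB/B)\bigr)=BwB/U\cap B_-vB/U$ is correct for the reason you give ($B_-vB$ is left $B_-$-stable and right $B$-stable, so membership of $tx\dot w$ in $B_-vB$ is detected by $x\dot w$ alone).

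The one step you rightly single out, however, is not established by your computation. With $\kappa_{\dot w}(tx\dot w.U)=(t,xw.B)$ as in (\ref{eqkappa}), left translation by $t_0$ sends $tx\dot w.U$ to $(t_0t)x\dot w.U$ --- your expression $(t_0t)(t^{-1}t_0^{-1}t_0tx)\dot w.U$ simplifies to this, and no conjugation of the unipotent part occurs --- so $\kappa_{\dot w}$ intertwines left translation with the $T$-action on the \emph{first factor only}, not with the diagonal action that Proposition~\ref{Tmix} requires. This is repairable in two ways: either replace $\kappa_{\dot w}$ by $tx\dot w.U\mapsto(t,txw.B)$, i.e.\ compose with the shear $S(t,y.B)=(t,ty.B)$, which is diagonal-equivariant and still Poisson because $S$ preserves $0\bowtie_{A_0}\pi_{\sG/\sB}$ (the correction term $\sum_i(0,\lambda(H_i))\wedge(0,\lambda(H_i))$ vanishes since $A_0$ is symmetric and $\pi_{\sG/\sB}$ is $T$-invariant); or keep $\kappa_{\dot w}$ and note that $S$ conjugates the first-factor action into the diagonal action while fixing each $T\times L$ (as $L$ is $T$-stable), so the two actions yield the same $T$-leaf partition $\{T\times L\}$. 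With either repair your argument closes; note the paper itself glosses over this point (and in the proof of Theorem~E asserts diagonal equivariance of $\kappa_{\dot w_0}$ outright), so your instinct to check it was sound --- only the check itself needs correcting.
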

\begin{proof}
Recall that
\[G/U=\bigsqcup_{u\in W} BuB/U\]
such that $BuB$ is a Poisson subvariety of $(G/U,\pi_{\sG/\sU})$ for each $u\in W$. To study the $T$-leaf decomposition of $(G/U,\pi_{\sG/\sU})$,
it is sufficient to study the $T$-leaves of $(BuB/U,\pi_{\sG/\sU})$ for each fixed $u\in W$. It follows from Proposition \ref{mixedGU}, Proposition \ref{TleavesGB} and Proposition \ref{Tmix} that for each $v\leq u\in W$, the subvariety
\[\kappa_{\dot w}^{-1}(T\times (BuB/B\cap B_-vB/B))=BuB/U\cap B_-vB/U\]
is a single $T$-leaf of $(BuB/U,\pi_{\sG/\sU})$. Therefore, (\ref{TleaveGU}) gives $T$-leaf decomposition of $(G/U,\pi_{\sG/\sU})$.
\end{proof}
\sectionnew{General construction of Frobenius splittings via Poisson $\mathbb{T}$-Pfaffians}
\subsection{Poisson algebras in positive characteristics}\label{ppc}
Denote by $k$ an algebraically closed field with ${\rm char}(k) = p > 2$.
The purpose of this section is to explain a few aspects of Poisson algebras over $k$,
and in particular the notion of Poisson $\mathbb{T}$-Pfaffians, $\mathbb{T}$ being an algebraic torus,
which in the case of characteristic $0$
is introduced in \cite{Tleaves}. Note that a special feature
is that, given a Poisson algebra $A$ and any element $a \in A$, the ideal $\langle a^p \rangle$ is a Poisson ideal since
\[
\{a^pb,c\}=a^p\{b,c\}+ pa^{p-1}b\{a,c\}=a^p\{b,c\} \in \langle a^p \rangle, \hs b, c \in A.
\]
\begin{notation}
{\rm	Let $A$ be a $k$-algebra and $n$ be a non-negative integer. Recall that an $n$-derivation on $A$ is an element $\alpha \in \rm{Hom}_k(\wedge^n A, A)$ such that $\alpha$ is a derivation in each of its arguments. Following the notation in \cite[Section 3.1]{CAP}, the vector space consisting of all $n$-derivations on $A$ will be denoted by $\mathfrak{X}^{n}(A)$. By convention, $\mathfrak{X}^{0}(A)=A$.}
\end{notation}

There is a natural $A$-module structure on $\mathfrak{X}^{n}(A)$: for $b \in A$ and $\alpha \in\mathfrak{X}^{n}(A)$,
\[
(b \alpha) (a_1, \ldots, a_n )=b (\alpha (a_1, \ldots, a_n)), \ \ a_1, \ldots, a_n \in A.
\]
Suppose $A$ is finitely generated by $r$ elements. By the skew-symmetry of multi-derivations, $\mathfrak{X}^{n}(A)=0$ for all $n > r$. Hence there exists a largest $n$ such that $\mathfrak{X}^{n}(A) \neq 0$. The module of top degree multi-derivations on a finitely generated $k$-algebra $A$, denoted by $\mathfrak{X}^{top}(A)$, is then defined to be $\mathfrak{X}^{n}(A)$ for such largest $n$.

We now briefly recall the definition of the wedge product and the Schouten bracket on $\mathfrak{X}(A)= \bigoplus_{n=0}^{\infty} \mathfrak{X}^{n}(A)$. Let $\alpha \in \mathfrak{X}^{m}(A)$, $\beta \in \mathfrak{X}^{n}(A)$ and $a_1, \ldots, a_{m+n} \in A$. Then $\alpha \wedge \beta \in \mathfrak{X}^{m+n}(A)$ is defined by
\[
(\alpha \wedge \beta)(a_1, \ldots , a_{m+n}) = \sum_{\sigma \in S_{m,n}} {\rm sgn} (\sigma) \alpha(a_{\sigma(1)}, \ldots, a_{\sigma (m)}) \beta (a_{\sigma(m+1)}, \ldots, a_{\sigma (m+n)}),
\]
where $S_{m,n}$ denotes the set of all $(m,n)$-shuffles, while $[\alpha,\beta]\in \mathfrak{X}^{m+n-1}(A)$ is
defined by
\[
\begin{aligned}
& [\alpha, \beta](a_1, \ldots, a_{m+n-1}) \\
= &\sum_{\sigma \in S_{n,m-1}} {\rm sgn} (\sigma) \alpha (\beta (a_{\sigma(1)}, \ldots, a_{\sigma(n)} ) , a_{\sigma (n+1)} , \ldots , a_{\sigma(n+m-1)} ) \\
& - (-1)^{(m-1)(n-1)} \sum_{\sigma \in S_{m,n-1}} {\rm sgn} (\sigma) \beta (\alpha (a_{\sigma(1)}, \ldots, a_{\sigma(m)} ) , a_{\sigma (m+1)} , \ldots , a_{\sigma(m+n-1)} ).
\end{aligned}
\]
Being a bi-derivation, any Poisson bracket on $A$ is of the form
\[
\{a, \, b\} = \pi(a, b), \hs a, b \in A,
\]
for a unique $\pi \in \fX^2(A)$, and the condition ${\rm char}(k) \neq 2$ implies that the Jacobi identity of
$\{- , - \}$ is equivalent to  $[\pi,\pi]=0$.
\begin{notation}
{\rm For any integer $r \geq 1$, let
	\[
	S_{[2r]}=\{ \sigma \in S_{2r}: \sigma(1) < \sigma(3) < \ldots < \sigma(2r-1) \mbox{ and } \sigma(2i-1) < \sigma(2i), \,
	i = 1, \ldots, r \},
	\]
	and for $\pi \in \fX^2(A)$, set $\pi^{[r]} \in \fX^{2r}(A)$ by
	\[
	\pi^{[r]}(a_1, \ldots, a_{2r})=  \sum_{\sigma \in S_{[2r]}} ({\rm sgn} \sigma) \pi(a_{\sigma(1)}, a_{\sigma(2)}) \ldots \pi(a_{\sigma(2r-1)}, a_{\sigma(2r)}).
	\]}
\end{notation}

\begin{lemma}\label{le-pi-r}
	For any $\pi \in \fX^2(A)$ and any integer $r \geq 1$, one has $\pi^{\wedge r} = r! \pi^{[r]}$.
\end{lemma}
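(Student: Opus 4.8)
The plan is to prove the identity by induction on $r$. For $r=1$ the set $S_{[2]}$ consists of the identity permutation only, since the conditions $\sigma(1)<\sigma(2)$ leave no freedom; hence $\pi^{[1]}=\pi=\pi^{\wedge 1}$ and $1!=1$. For the inductive step I would write $\pi^{\wedge r}=\pi^{\wedge(r-1)}\wedge\pi$ and invoke the inductive hypothesis $\pi^{\wedge(r-1)}=(r-1)!\,\pi^{[r-1]}$, so that the claim reduces to the single identity $\pi^{[r-1]}\wedge\pi=r\,\pi^{[r]}$.

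To prove this last identity, expand $(\pi^{[r-1]}\wedge\pi)(a_1,\dots,a_{2r})$ using the definition of $\wedge$ as a sum over $(2r-2,2)$-shuffles $\rho$, and then expand $\pi^{[r-1]}(a_{\rho(1)},\dots,a_{\rho(2r-2)})$ as a sum over $\tau\in S_{[2r-2]}$. Writing $\sigma=\rho\circ\tilde\tau$, where $\tilde\tau$ is $\tau$ extended by the identity on $\{2r-1,2r\}$, one has ${\rm sgn}(\sigma)={\rm sgn}(\rho)\,{\rm sgn}(\tau)$, and $(\rho,\tau)\mapsto\sigma$ is a bijection onto the set of permutations $\sigma$ of $\{1,\dots,2r\}$ satisfying $\sigma(2i-1)<\sigma(2i)$ for all $i=1,\dots,r$ and $\sigma(1)<\sigma(3)<\cdots<\sigma(2r-3)$ — that is, with the last ``block'' $\{\sigma(2r-1),\sigma(2r)\}$ unconstrained relative to the others. (The inverse recovers $\rho(2r-1)=\sigma(2r-1)$, $\rho(2r)=\sigma(2r)$, sorts the rest, and sets $\tilde\tau=\rho^{-1}\sigma$, which one checks lies in $S_{[2r-2]}$.) Thus
\[
(\pi^{[r-1]}\wedge\pi)(a_1,\dots,a_{2r})=\sum_{\sigma}({\rm sgn}\,\sigma)\,\pi(a_{\sigma(1)},a_{\sigma(2)})\cdots\pi(a_{\sigma(2r-1)},a_{\sigma(2r)}),
\]
the sum being over that larger index set.

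Finally I would observe that each fully ordered configuration, i.e. each element of $S_{[2r]}$, arises from exactly $r$ of these $\sigma$'s, one for each choice of which of the $r$ blocks is singled out as the last one; and that permuting the $r$ blocks by some $\omega\in S_r$ corresponds to the permutation of $\{1,\dots,2r\}$ acting as $\omega$ on odd-indexed slots and simultaneously as $\omega$ on even-indexed slots, whose sign is $({\rm sgn}\,\omega)^2=1$, while the product $\pi(a_{\sigma(1)},a_{\sigma(2)})\cdots\pi(a_{\sigma(2r-1)},a_{\sigma(2r)})$ is unchanged because $A$ is commutative. Hence the displayed sum equals $r$ times the sum over $S_{[2r]}$, which is $r\,\pi^{[r]}(a_1,\dots,a_{2r})$; combining with the inductive hypothesis gives $\pi^{\wedge r}=(r-1)!\cdot r\cdot\pi^{[r]}=r!\,\pi^{[r]}$. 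The only genuine obstacle is bookkeeping: keeping track of signs under composition of shuffles and verifying the $r$-to-one multiplicity with sign and value invariance under block permutations; the skew-symmetry of $\pi$ and commutativity of $A$ enter only to make these formulas well defined and these invariances valid. (Note the statement is claimed as an equality, not after dividing by $r!$, so it remains correct — reading as $\pi^{\wedge r}=0$ — even when $p\mid r!$.)
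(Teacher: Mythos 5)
Your proof is correct, but it takes a slightly different route than the paper's. The paper argues in one step: it expands $\pi^{\wedge r}(a_1,\ldots,a_{2r})$ as a sum over the set $R^r_2$ of permutations $\sigma \in S_{2r}$ satisfying $\sigma(2i-1)<\sigma(2i)$ for all $i$, and then observes that sorting the $r$ blocks $\{\sigma(2i-1),\sigma(2i)\}$ by their first entries is an even permutation which leaves the product of $\pi$-values unchanged (commutativity of $A$), so each element of $S_{[2r]}$ is hit exactly $r!$ times. You instead induct on $r$, reducing everything to the single identity $\pi^{[r-1]}\wedge\pi = r\,\pi^{[r]}$, which you establish by composing $(2r-2,2)$-shuffles with elements of $S_{[2r-2]}$ and then counting, for each element of $S_{[2r]}$, the $r$ choices of which block is singled out as the last one. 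The core mechanism --- block transpositions have sign $+1$ and the product of the $\pi$-factors commutes --- is identical in both arguments, so neither gains extra generality; the paper's version is shorter, while your inductive version makes explicit the iterated-shuffle bookkeeping that the paper compresses into the phrase ``by definition'' when it writes $\pi^{\wedge r}$ directly as a sum over $R^r_2$. Your closing remark, that the statement is an equality and hence remains valid (reading as $\pi^{\wedge r}=0$) when $p \mid r!$, is a fair observation and consistent with how the lemma is used.
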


\begin{proof} Denote by $R^r_2$ the set of all elements $\sigma \in S_{2r}$ such that $\sigma(2i-1) < \sigma (2i)$ for all $i=1, \ldots , r$. In particular, $R^2_2$ is just the set of all $(2,2)$-shuffles. By definition,
	for any $a_1, \ldots, a_{2r} \in A$, one has
	\[
	\pi^{\wedge r} (a_1, \ldots, a_{2r}) = \sum_{\sigma \in R^r_2} ({\rm sgn} \sigma) \pi(a_{\sigma(1)}, a_{\sigma(2)}) \ldots \pi(a_{\sigma(2r-1)}, a_{\sigma(2r)}).
	\]
	For each $\sigma \in R^r_2$, we can reorder the factors in the right hand side of the above
	expression so we may assume that $\sigma(1) < \sigma(3) < \ldots < \sigma(2r-1)$, i.e., $\sigma \in S_{[2r]}$. Thus
	\begin{align*}
	\pi^{\wedge r} (a_1, \ldots, a_{2r})&= r! \sum_{\sigma \in S_{[2r]}} ({\rm sgn} \sigma) \pi(a_{\sigma(1)}, a_{\sigma(2)}) \ldots \pi(a_{\sigma(2r-1)}, a_{\sigma(2r)})\\
	& = r!\pi^{[r]}(a_1, \ldots, 2_{2r}).
	\end{align*}
\end{proof}

\begin{definition}
{\rm	The rank of a Poisson structure $\pi$ on $A$, denoted by $\rank (\pi)$, is defined to be $2r$, where $r$ is the largest integer such that $\pi^{[r]} \neq 0$.}
\end{definition}

Let  $A$ be a finitely generated $k$-algebra and
let $\mathfrak{m}$ be a maximal ideal of $A$. Recall that $\mathfrak{m} / \mathfrak{m}^2$ is a vector space over
the field $A / \mathfrak{m}$. As $A$ is finitely generated and  $k$ is algebraically closed,
$A / \mathfrak{m} \cong k$ \cite[Corollary 7.10]{atiyah-macdonald}. Let $\alpha \in \mathfrak{X}^n (A)$ and $a_1, \ldots, a_n \in A$. Suppose $a_i \in \mathfrak{m}^2$ for some $i$. Then by the Leibniz rule, one see that $\alpha (a_1, \ldots, a_n)$ lies in $\mathfrak{m}$. Therefore, $\alpha$ induces an alternating $k$-linear map
\[
\alpha_{\mathfrak{m}} : \wedge^n (\mathfrak{m} / \mathfrak{m}^2) \rightarrow A/ \mathfrak{m}.
\]
Assume now that $A$ is a Poisson $k$-algebra with Poisson structure $\pi \in \fX^2(A)$. Then $\pi$
induces an alternating $k$-linear map for each maximal ideal $\mathfrak{m}$ of $A$
\[
\pi_{\mathfrak{m}} :  (\mathfrak{m} / \mathfrak{m}^2) \wedge (\mathfrak{m} / \mathfrak{m}^2) \rightarrow k.
\]
Consequently, one has the skew-symmetric linear transformation
\[
\pi^{\sharp}_{\mathfrak{m}} :  \mathfrak{m} / \mathfrak{m}^2 \rightarrow ( \mathfrak{m} / \mathfrak{m}^2 )^{*}.
\]
of $k$-vector spaces for each maximal ideal $\mathfrak{m}$ of $A$.

\begin{definition}
{\rm	The rank of a Poisson structure $\pi$ at a maximal ideal $\mathfrak{m}$ of $A$, denoted by $\rank (\pi_{\mathfrak{m}})$, is defined to be the rank of $\pi^{\sharp}_{\mathfrak{m}} $. It is an even integer as we are assuming that ${\rm char}(k) \neq 2$.}
\end{definition}

Let $V$ be a $k$-vector space and let $\omega \in \wedge^2 V$. Similar to the case of multi-derivation, we have the element $\omega^{[l]} \in \wedge^{2l} V$. Following previous notation, define $\omega^{[l]} \in \wedge^{2l} V$ by
\[
\omega^{[l]}(v_1, \ldots, v_{2l})=  \sum_{\sigma \in S_{[2l]}} ({\rm sgn} \sigma) \omega(v_{\sigma(1)}, v_{\sigma(2)}) \ldots \omega(v_{\sigma(2l-1)}, v_{\sigma(2l)}), \ \ v_i \in V^{*}.
\]

\begin{remark}
	{\rm It is easy to see that $(\pi^{[l]} )_{\mathfrak{m}} = (\pi_{\mathfrak{m}})^{[l]} \in \wedge^{2l} ( \mathfrak{m} / \mathfrak{m}^2 )^{*}$. Therefore there is no ambiguity to write $\pi^{[l]}_m$.}
\end{remark}

\begin{proposition}
	Let $\pi$ be a Poisson structure on $A$. Then $\rank(\pi) \geq \rank (\pi_{\mathfrak{m}})$ for every maximal ideal $\mathfrak{m}$. Moreover, if $A$ is reduced, then there exists some maximal ideal $\mathfrak{m}$ such that $\rank(\pi) = \rank (\pi_{\mathfrak{m}}) $.
\end{proposition}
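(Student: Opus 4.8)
The plan is to prove the two assertions separately, treating the inequality $\rank(\pi) \geq \rank(\pi_{\mathfrak{m}})$ as a pointwise statement and the equality (under the reducedness hypothesis) as a genericity statement. For the inequality, fix a maximal ideal $\mathfrak{m}$ and suppose $\rank(\pi_{\mathfrak{m}}) = 2s$, so that $\pi_{\mathfrak{m}}^{[s]} \neq 0$ in $\wedge^{2s}(\mathfrak{m}/\mathfrak{m}^2)^*$. Since $(\pi^{[s]})_{\mathfrak{m}} = (\pi_{\mathfrak{m}})^{[s]}$ by the Remark preceding the Proposition, the multi-derivation $\pi^{[s]} \in \fX^{2s}(A)$ cannot be identically zero (an $n$-derivation whose induced map $\alpha_{\mathfrak{m}}$ on $\wedge^n(\mathfrak{m}/\mathfrak{m}^2)$ is nonzero is in particular nonzero). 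Hence $\rank(\pi) \geq 2s = \rank(\pi_{\mathfrak{m}})$ by the definition of $\rank(\pi)$ as twice the largest $r$ with $\pi^{[r]} \neq 0$.

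For the equality when $A$ is reduced, let $2r = \rank(\pi)$, so $\pi^{[r]} \in \fX^{2r}(A)$ is a nonzero $2r$-derivation. I would first argue that a nonzero multi-derivation $\alpha \in \fX^n(A)$ on a reduced finitely generated $k$-algebra must have $\alpha_{\mathfrak{m}} \neq 0$ for \emph{some} maximal ideal $\mathfrak{m}$: indeed, choosing generators $a_1, \dots, a_t$ of $A$, a multi-derivation is determined by its values on $n$-tuples of generators, so there is some tuple $(a_{i_1}, \dots, a_{i_n})$ with $b := \alpha(a_{i_1}, \dots, a_{i_n}) \neq 0$ in $A$; since $A$ is reduced, $b$ is not nilpotent, so by the Nullstellensatz there is a maximal ideal $\mathfrak{m}$ with $b \notin \mathfrak{m}$, and then $\alpha_{\mathfrak{m}}(\bar a_{i_1} \wedge \cdots \wedge \bar a_{i_n}) = \bar b \neq 0$ in $A/\mathfrak{m} \cong k$. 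Applying this to $\alpha = \pi^{[r]}$ gives a maximal ideal $\mathfrak{m}$ with $(\pi^{[r]})_{\mathfrak{m}} = (\pi_{\mathfrak{m}})^{[r]} \neq 0$, whence $\rank(\pi_{\mathfrak{m}}) \geq 2r$. Combined with the inequality already proved, this yields $\rank(\pi_{\mathfrak{m}}) = 2r = \rank(\pi)$.

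One technical point deserves care: the claim that a nonzero $n$-derivation on $A$ stays nonzero when passed to the cotangent space at a well-chosen point. The subtlety is that $\alpha$ being nonzero means some value $\alpha(a_{i_1}, \dots, a_{i_n})$ is a nonzero element of $A$, but this element could a priori lie in every maximal ideal if $A$ were not reduced (that is exactly where the hypothesis enters — a nonzero nilpotent lies in the nilradical, hence in every maximal ideal). With reducedness, the intersection of all maximal ideals is the nilradical, which is $0$, so some maximal ideal avoids $b$. I expect this Nullstellensatz-type argument — and bookkeeping the identification $(\pi^{[r]})_{\mathfrak{m}} = (\pi_{\mathfrak{m}})^{[r]}$ together with the fact that evaluating $\alpha$ on generators suffices — to be the only real content; the rest is formal manipulation of the definitions of $\rank(\pi)$ and $\rank(\pi_{\mathfrak{m}})$. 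The main obstacle, such as it is, is simply making the passage from "nonzero as an element of $A$" to "nonzero in some residue field" rigorous, which is precisely the role of the reducedness assumption and explains why the second half of the statement needs it while the first half does not.
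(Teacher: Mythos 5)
Your argument is correct and is essentially the paper's proof run in the direct rather than contrapositive direction: both halves rest on the identification $(\pi^{[r]})_{\mathfrak{m}}=(\pi_{\mathfrak{m}})^{[r]}$ together with, for the equality part, the fact that a reduced finitely generated $k$-algebra has zero Jacobson radical (= nilradical), so a nonzero value of $\pi^{[r]}$ avoids some maximal ideal. The only point to tidy is that $\alpha_{\mathfrak{m}}$ is defined on $\wedge^{n}(\mathfrak{m}/\mathfrak{m}^{2})$, so before evaluating you should replace each $a_{i_j}$ by $a_{i_j}-\lambda_{i_j}$ with $\lambda_{i_j}\in k$ its residue modulo $\mathfrak{m}$, which changes nothing since derivations kill constants --- this is precisely the $A=k\oplus\mathfrak{m}$ step the paper spells out.
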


\begin{proof}
	Suppose $\rank (\pi)=2r$. Then $\pi^{[ r+1]} =0$, hence $\pi^{[r+1]}_{\mathfrak{m}} =0 $, which implies that $\rank (\pi_{\mathfrak{m}}) \leq 2r$.
	
	To prove the second part, we assume the contrary. Suppose $\rank (\pi_\mathfrak{m}) < 2r$ for every maximal ideal $\mathfrak{m}$, so that $\pi^{[r]}_\mathfrak{m} =0$. By definition, it means that $\pi^{[r]}_\mathfrak{m} (b_1, \ldots , b_{2r})=0$ for all $b_i \in \mathfrak{m}$. In other words, one has $\pi^{[r]}(b_1, \ldots, b_{2r}) \in \mathfrak{m}$ for all $b_i \in \mathfrak{m}$.
	Fix any maximal ideal $\mathfrak{m}$, one has $A=k \bigoplus \mathfrak{m}$. Let $a_1, \ldots, a_{2r} \in A$ and write $a_i=c_i+b_i$, where $c_i \in k$ and $b_i \in \mathfrak{m}$. Then
	\[
	\pi^{[r]}(a_1, \ldots, a_{2r}) = \pi^{[r]} (c_1+b_1 , \ldots, c_{2r}+b_{2r}) =\pi^{[r]}(b_1, \ldots, b_{2r}) \in \mathfrak{m}.
	\]
	This holds for every maximal ideal $\mathfrak{m}$. Therefore $\pi^{[r]}(a_1, \ldots, a_{2r}) $ lies in the Jacobson radical of $A$. Since $A$ is a finitely generated $k$-algebra, its Jacobson radical equals its nil-radical, which is zero by our assumption that $A$ is reduced. So $\pi^{[r]}=0$, contradicting that the rank of $\pi$ is $2r$.
\end{proof}

\begin{example}
{\rm	Let $A=k[x,y,z]/I$, where $I=\langle x^2\rangle$. Then $A$ is not reduced since $x$ is a nilpotent element. Let $\pi \in \mathfrak{X}^2(A)$ be defined by
	\[
	\pi(x,y)=x, \ \pi(x,z)=x, \ \pi(y,z)=0.
	\]
	Then for any $f \in k[x,y,z]$, one has $\pi(x,f) \in \langle x \rangle$. Therefore $\pi$ is well-defined because $\pi(x^2,f)=2x \pi(x,f) \in I$. Since $\pi \neq 0$, one must have $\rank(\pi) > 0$. Actually it can be easily seen that $\rank(\pi)=2$.
	On the other hand, maximal ideals of $A$ are of the form $\mathfrak{m}=\langle x,y-y_0,z-z_0\rangle$, where $y_0,z_0 \in k$. Fix arbitrary $y_0$ and $z_0$. Then $\pi_m$ is given by
	\[
	\begin{aligned}
	&\pi_m(x,y-y_0)=\pi(x,y-y_0)=\pi(x,y)=x \in m,\\
	&\pi_m(x,z-z_0)=\pi(x,z-z_0)=\pi(x,z)=x \in m,\\
	&\pi_m(y-y_0,z-z_0)=\pi(y,z)=0.
	\end{aligned}
	\]
	Hence $\pi_\mathfrak{m}$ is trivial, implying that $\rank(\pi_\mathfrak{m})=0$, for any maximal ideals of $A$.}
\end{example}

\subsection{Frobenius Poisson $\mathbb{T}$-Pfaffians}

Recall that an algebraic torus $\mathbb{T}$ over $k$ is an algebraic group isomorphic to $(k^{*})^n$ for some non-negative integer $n$.
A character of $\mathbb{T}$ is an algebraic group homomorphism $\lambda : \TT \rightarrow k^*$. The set of all characters of $\TT$ forms an abelian group with the group operation being pointwise multiplication and is denoted by $X^{*}(\TT)$, called the character group of $\TT$.
Any identification $\TT \cong (k^{*})^n$ gives coordinates $(t_1, \ldots, t_n)$ on $\TT$. Let
$\lambda \in X^*(\TT)$. Then
$\lambda$ is a morphism of varieties, so $\lambda(t_1, \ldots, t_n)$ should be a Laurent monomial in these variables. As $\lambda$ is also a group homomorphism, $\lambda$ should be a Laurent monomial. Hence
\displaymath
\lambda(t_1, \ldots, t_n)={t_1}^{r_1} \ldots {t_n}^{r_n}
\enddisplaymath
for some $r_1, \ldots, r_n \in \mathbb{Z}$. On the other hand, it can be easily seen that all $\lambda$ of the form above are characters of $\TT$.  One thus sees that $X^{*}(\TT)$ is isomorphic to $\mathbb{Z}^n$, a free abelian group of rank $n$.
For the rest of the chapter, we will identify $X^{*}(\TT)$ with $\mathbb{Z}^n$. Also, we write $t^{\lambda}$ for $\lambda(t)$.

Suppose $\lambda \in X^{*}(\TT)$, then it induces a map $\lambda: \mathfrak{t} \rightarrow k$, where $\mathfrak{t}$ is the Lie algebra of $\mathbb{T}$, given by
\[
\lambda(h)=\sum_{i=1}^n \lambda_i (h_i),
\]
where $\lambda=(\lambda_1, \ldots , \lambda_n )$ and $h=(h_1 ,\ldots, h_n)$.

Suppose now $\TT$ acts on a vector space $V$. Let $v \in V$. If there exists some function $\lambda: \TT \rightarrow k^{*}$ such that $tv=\lambda (t)v$ for all $t \in \TT$, we say that $v$ is a weight vector with weight $\lambda$. The set of all $\lambda$-weight vectors form a vector subspace of $V$, called the weight space and is denoted by $V_{\lambda}$.

\begin{definition}
	{\rm Let $A$ be a $k$-algebra with a $\TT$-action. The action is said to be rational if $A$ can be decomposed into a direct sum of weight spaces, i.e.
	\[
	A=\bigoplus_{\lambda \in X^{*}(T)} V_{\lambda}.
	\]
	Note that the coordinate ring of an affine $\TT$-variety over $k$ is an algebra with a rational $\TT$-action.
    }
\end{definition}

\begin{definition}\label{gvf}
{\rm Suppose $\TT$ acts on $A$ rationally. Let $a \in A$. From the weight-space decomposition of $A$, we can write $a=\sum_{i=1}^n a_i$, where $a_i \in V_{\lambda_i}$ for some $\lambda_i \in X^{*}(\TT)$. Let $h \in \mathfrak{t}$, the $k$-derivation on $A$ generated by $h$, denoted by $\partial_h$, is defined as
\[
\partial_h (a)= \sum_{i=1}^n \lambda_i (h)a_i.
\]
}
\end{definition}

\begin{definition} {\rm Let $A$ be a Poisson $k$-algebra.
	A Poisson $\TT$-action on $A$ is a $\TT$-action on $A$ by Poisson automorphisms. A rational Poisson $\TT$-action is a rational $\TT$-action on $A$ which is at the same time a Poisson $\TT$-action.}
\end{definition}

\begin{lemma}\label{9}
	For $h \in \mathfrak{t}$, the derivation $\partial_h$ induced from a rational Poisson torus action is a Poisson derivation.
\end{lemma}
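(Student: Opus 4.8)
The plan is to reduce the statement to the observation that a rational Poisson $\TT$-action forces the Poisson bracket to be graded with respect to the weight-space decomposition $A=\bigoplus_{\lambda\in X^{*}(\TT)}A_\lambda$, after which the bracket-Leibniz identity for $\partial_h$ can be checked one pair of weight components at a time.

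First I would prove the grading property $\{A_\lambda,A_\mu\}\subseteq A_{\lambda+\mu}$ for all $\lambda,\mu\in X^{*}(\TT)$. For $a\in A_\lambda$ and $b\in A_\mu$, the hypothesis that every $t\in\TT$ acts by a Poisson automorphism gives
\[
t\cdot\{a,b\}=\{t\cdot a,\,t\cdot b\}=t^{\lambda}t^{\mu}\{a,b\}=t^{\lambda+\mu}\{a,b\}.
\]
Writing $\{a,b\}=\sum_\nu c_\nu$ with $c_\nu\in A_\nu$ and projecting this equality onto the $\nu$-component, one gets $t^{\nu}c_\nu=t^{\lambda+\mu}c_\nu$ for all $t\in\TT$; if $c_\nu\neq 0$ this forces $\nu=\lambda+\mu$, so $\{a,b\}\in A_{\lambda+\mu}$. (The same computation applied to the commutative product, which $\TT$ also preserves, shows $A_\lambda A_\mu\subseteq A_{\lambda+\mu}$ and re-confirms that $\partial_h$ is a derivation for the multiplication, which is the part already built into Definition~\ref{gvf}.)

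Next, fix $h\in\ft$ and arbitrary $a,b\in A$, and expand $a=\sum_i a_i$, $b=\sum_j b_j$ into weight components with $a_i\in A_{\lambda_i}$ and $b_j\in A_{\mu_j}$. Bilinearity of the bracket gives $\{a,b\}=\sum_{i,j}\{a_i,b_j\}$, and by the grading property each $\{a_i,b_j\}$ is homogeneous of weight $\lambda_i+\mu_j$, so Definition~\ref{gvf} yields $\partial_h\{a,b\}=\sum_{i,j}(\lambda_i+\mu_j)(h)\{a_i,b_j\}$. On the other hand $\partial_h a=\sum_i\lambda_i(h)a_i$ and $\partial_h b=\sum_j\mu_j(h)b_j$, so by bilinearity $\{\partial_h a,b\}=\sum_{i,j}\lambda_i(h)\{a_i,b_j\}$ and $\{a,\partial_h b\}=\sum_{i,j}\mu_j(h)\{a_i,b_j\}$. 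Adding the last two identities and using the additivity of $\lambda\mapsto\lambda(h)$ (part of the induced map $\ft\to k$) matches the expression for $\partial_h\{a,b\}$, giving $\partial_h\{a,b\}=\{\partial_h a,b\}+\{a,\partial_h b\}$, which is exactly the Poisson-derivation property.

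I do not expect a genuine obstacle; the only step needing care is the first one, where one must ensure that $\{a,b\}$, a priori only an element of $A$, is genuinely homogeneous — this is where rationality of the action (so that $A$ is the direct sum, not merely the union, of its weight spaces) is used. Everything after that is bookkeeping with finite sums.
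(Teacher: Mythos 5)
Your proof is correct and follows essentially the same route as the paper: expand $a$ and $b$ into weight components, use $\{A_\lambda,A_\mu\}\subseteq A_{\lambda+\mu}$, and compare both sides term by term. The only difference is that you explicitly justify the grading property from the Poisson-automorphism hypothesis, a step the paper's proof asserts without comment.
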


\begin{proof}
	Let $a=\sum_{i=1}^n a_i$ and $b=\sum_{j=1}^m b_i$. One has $\{a_i, b_j \} \in V_{\lambda_i + \mu_j}$, therefore
	\[
	\partial_h \{ a , b \}= \partial_h ( \sum_{i,j} \{ a_i ,b_j \} )= \sum_{i,j} ( \lambda_i +\mu_j ) (h) \{ a_i ,b_j \}.
	\]
	On the other hand,
	\begin{eqnarray*}
		&& \{ \partial_h (a), b \}+ \{a, \partial_h (b) \}\\
		&=&\{ \sum_i \lambda_i (h) a_i, \sum_j b_j \}+ \{\sum_i a_i , \sum_j \mu_j (h) b_j \} \\
		&=&\sum_{i,j} \lambda_i (h)  \{ a_i, b_j \}+ \sum_{i,j} \mu_j (h)  \{a_i , b_j \} \\
		&=&\sum_{i,j} ( \lambda_i +\mu_j ) (h) \{ a_i ,b_j \}.
	\end{eqnarray*}
\end{proof}

\begin{definition}
	{\rm Let $(A,\pi)$ be a finitely generated Poisson algebra with a rational Poisson $\TT$-action. Suppose the rank of the Poisson structure on $A$ is $2r$.\emph{ A Poisson $\TT$-Pfaffian }of $(A,\pi)$ is a non-zero element $\sigma \in \mathfrak{X}^{top}(A)$ of the form
	\[
	\sigma={\pi}^{[r]} \wedge \partial_{h_1} \wedge \ldots \wedge \partial_{h_l},
	\]
	where $h_1 , \ldots , h_l \in \mathfrak{t}$.}
\end{definition}

For $a \in A$, define $X_a \in \mathfrak{X}^{1}(A)$ by $X_a(b) = \{a, b\}$ for $b \in A$. It is easy to see that
$X_a$ is a Poisson derivation of $A$. We will refer to $X_a$ as the {\it Hamiltonian derivation} of $a$.

\begin{proposition}\label{10}
	The following formula holds for all positive integers $l$ and all $a_1, \ldots ,a_{2l} \in A$:
	\displaymath
	\pi^{[ l]} (a_1 , \ldots, a_{2l}) = -(\pi^{[l-1]} \wedge X_{a_{2l} }) (a_1, \ldots , a_{2l-1}).
	\enddisplaymath
	In particular, $\pi^{[r]} \wedge X_a=0$ for the Hamiltonian derivation $X_a$, where $r$ is the half rank of $\pi$.
\end{proposition}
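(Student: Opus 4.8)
The plan is to rewrite the desired identity as a Laplace (cofactor) expansion of a Pfaffian and then match it, term by term, against the definition of the wedge product. The starting observation is that for fixed $a_1, \ldots, a_{2l} \in A$ the quantity $\pi^{[l]}(a_1, \ldots, a_{2l})$ is nothing but the Pfaffian of the antisymmetric matrix $\big(\pi(a_i, a_j)\big)_{1\le i,j\le 2l}$ with entries in $A$: indeed $S_{[2l]}$ is in canonical bijection with the set of perfect matchings of $\{1, \ldots, 2l\}$ via $\sigma \mapsto \{\{\sigma(1),\sigma(2)\},\ldots,\{\sigma(2l-1),\sigma(2l)\}\}$, and the corresponding summand is the usual signed product of entries over a matching. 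With the convention $\pi^{[0]}=1\in A=\mathfrak{X}^0(A)$ this also covers the case $l=1$.

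The key step is to establish the expansion
\[
\pi^{[l]}(a_1, \ldots, a_{2l}) \;=\; \sum_{j=1}^{2l-1}(-1)^{2l-1-j}\,\pi(a_j,a_{2l})\,\pi^{[l-1]}(a_1,\ldots,\widehat{a_j},\ldots,a_{2l-1}),
\]
where $\widehat{a_j}$ denotes omission. To prove it, partition $S_{[2l]}$ by the index $j\in\{1,\ldots,2l-1\}$ matched with $2l$. For each $j$, deleting the pair $\{j,2l\}$ and relabelling the remaining indices by $\{1,\ldots,2l-2\}$ through the unique order-preserving bijection $\tau\colon\{1,\ldots,2l-2\}\to\{1,\ldots,2l-1\}\setminus\{j\}$ gives a bijection onto $S_{[2l-2]}$, $\sigma\mapsto\sigma'$; since the factors $\pi(\cdot,\cdot)$ commute, the product attached to $\sigma$ equals $\pi(a_j,a_{2l})$ times the product over the pairs of $\sigma'$ evaluated on the list $a_1,\ldots,\widehat{a_j},\ldots,a_{2l-1}$, so that summing over all $\sigma$ with $2l$ matched to $j$ produces $\pi(a_j,a_{2l})\,\pi^{[l-1]}(a_1,\ldots,\widehat{a_j},\ldots,a_{2l-1})$. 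The one point requiring care is the sign. After an even permutation of the $l$ two-element blocks of $\sigma$ — which changes neither the product nor ${\rm sgn}\,\sigma$ — one may assume the block $\{j,2l\}$ is listed last; the resulting sequence is then $\rho\circ\bar\sigma'$, where $\bar\sigma'$ extends $\sigma'$ by fixing the last two positions and $\rho$ is the sequence $(1,\ldots,j-1,j+1,\ldots,2l-1,j,2l)$, obtained from the identity by moving $j$ rightward across $2l-1-j$ slots, so ${\rm sgn}\,\rho=(-1)^{2l-1-j}$; hence ${\rm sgn}\,\sigma=(-1)^{2l-1-j}\,{\rm sgn}\,\sigma'$. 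This sign bookkeeping (which is precisely the Laplace expansion of the Pfaffian along its last row) is the step I expect to be the main obstacle; everything else is a direct unwinding of the definitions.

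It remains to identify the right-hand side of the proposition. Applying the definition of the wedge product with $\alpha=\pi^{[l-1]}\in\mathfrak{X}^{2l-2}(A)$ and $\beta=X_{a_{2l}}\in\mathfrak{X}^{1}(A)$, the $(2l-2,1)$-shuffles are parametrised by $j=\sigma(2l-1)\in\{1,\ldots,2l-1\}$ with ${\rm sgn}\,\sigma=(-1)^{2l-1-j}$, whence
\[
(\pi^{[l-1]}\wedge X_{a_{2l}})(a_1,\ldots,a_{2l-1}) \;=\; \sum_{j=1}^{2l-1}(-1)^{2l-1-j}\,\pi^{[l-1]}(a_1,\ldots,\widehat{a_j},\ldots,a_{2l-1})\,\{a_{2l},a_j\}.
\]
Since $\{a_{2l},a_j\}=\pi(a_{2l},a_j)=-\pi(a_j,a_{2l})$, comparing with the expansion above yields $(\pi^{[l-1]}\wedge X_{a_{2l}})(a_1,\ldots,a_{2l-1})=-\pi^{[l]}(a_1,\ldots,a_{2l})$, which is the claimed formula. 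Finally, for the last assertion let $r$ be the half-rank of $\pi$, so that $\pi^{[r+1]}=0$ by definition of the rank; taking $l=r+1$ in the identity just proved gives, for every $a\in A$ and all $a_1,\ldots,a_{2r+1}\in A$,
\[
(\pi^{[r]}\wedge X_a)(a_1,\ldots,a_{2r+1}) \;=\; -\,\pi^{[r+1]}(a_1,\ldots,a_{2r+1},a) \;=\; 0,
\]
so $\pi^{[r]}\wedge X_a=0$ in $\mathfrak{X}^{2r+1}(A)$.
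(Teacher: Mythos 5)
Your proof is correct and follows essentially the same route as the paper's: both arguments reduce the identity to grouping the sum defining $\pi^{[l]}$ according to the index paired with $2l$ and matching it against the $(2l-2,1)$-shuffle expansion of $\pi^{[l-1]} \wedge X_{a_{2l}}$, with the ``in particular'' claim obtained by taking $l=r+1$ so that $\pi^{[r+1]}=0$. Your packaging of the key step as the Laplace expansion of the Pfaffian, with explicit sign bookkeeping, is if anything cleaner than the paper's write-up, but the underlying combinatorics is identical.
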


\begin{proof}
	By definition, one has
	\[
	\begin{aligned}
	& (\pi^{[l-1]} \wedge X_{a_{2l} }) (a_1, \ldots , a_{2l-1}) \\
	=& \sum_{\sigma \in S_{2l-2,1}} ({\rm sgn} \sigma) \pi^{[l-1]} (a_{\sigma(1)} , \ldots, a_{\sigma (2l-2)  }) X_{a_{2l} } (a_{\sigma (2l-1)}) \\
	=& - \sum_{\sigma \in S_{2l-2,1}} ({\rm sgn} \sigma) \pi^{[l-1]} (a_{\sigma(1)} , \ldots, a_{\sigma (2l-2)  }) \pi (a_{\sigma (2l-1)} , a_{2l}) \\
	=& - \sum_{ \substack{  \sigma \in S_{2l-2,2} \\ \sigma(2l)=2l } }
	({\rm sgn} \sigma) \pi^{[l-1]} (a_{\sigma(1)} , \ldots, a_{\sigma (2l-2)  }) \pi (a_{\sigma (2l-1)} , a_{\sigma(2l)}).
	\end{aligned}
	\]
	For each $\sigma \in S_{2l-2,2}$ with $\sigma(2l)=2l$, we know that
	\displaymath
	\begin{aligned}
		& ({\rm sgn} \sigma) \pi^{[l-1]} (a_{\sigma(1)} , \ldots, a_{\sigma (2l-2)  }) \pi (a_{\sigma (2l-1)} , a_{\sigma(2l)}) \\
		=& \sum_{\tau} {\rm sgn} ( \tau \sigma )  \pi(a_{\tau \sigma (1)},a_{\tau \sigma (2)}) \ldots \pi (a_{\tau \sigma (2l-3)},a_{\tau \sigma (2l-2)}) \pi (a_{\sigma (2l-1)} , a_{\sigma(2l)}),
	\end{aligned}
	\enddisplaymath
	where $\tau$ runs through all permutations on $\{\sigma(1) , \ldots, \sigma(2l-2) \}$ such that $\tau \sigma(1) < \tau \sigma (3) < \ldots < \tau \sigma (2l-3)$ and $\tau \sigma (2i-1) <  \tau \sigma (2i)$ for $i=1, \ldots, l-1$.
	If we extend each $\tau $ to a permutation in $S_{2l}$ by setting $\tau\sigma(2l-1)=\sigma(2l-1)$ and $\tau \sigma(2l)=\sigma (2l)=2l$, then
	\displaymath
	(\pi^{[l-1]} \wedge X_{a_{2l} }) (a_1, \ldots , a_{2l-1}) = -\sum_{\tau \sigma} {\rm sgn} ( \tau \sigma )  \pi(a_{\tau \sigma (1)},a_{\tau \sigma (2)}) \ldots \pi (a_{\tau \sigma (2l-1)} , a_{\tau \sigma(2l)}),
	\enddisplaymath
	where $\tau \sigma $ runs through all permutations in $R^l_2$ such that $\tau \sigma(1) < \tau \sigma (3) < \ldots < \tau \sigma (2l-3)$ and $\tau \sigma(2l)=2l$.
	Recall that $S_{[2r]}$ is the subset of permutations in $R^l_2$ such that $\sigma(1)< \sigma(3) \ldots <\sigma(2l-1)$. By rearranging the order of the terms, we obtain
	\[
	(\pi^{[l-1]} \wedge X_{a_{2l} }) (a_1, \ldots , a_{2l-1})=\sum_{  \sigma \in S_{[2r]} }
	({\rm sgn} \sigma) \pi (a_{\sigma(1)}, a_{\sigma(2)}) \ldots \pi (a_{\sigma(2l-1)} , a_{\sigma(2l)} ).
	\]
	On the other hand, still by definition
	\[
	\pi^{[l]} (a_1 , \ldots, a_{2l}) =  \sum_{  \sigma \in S_{[2r]} }
	({\rm sgn} \sigma) \pi (a_{\sigma(1)}, a_{\sigma(2)}) \ldots \pi (a_{\sigma(2l-1)} , a_{\sigma(2l)} ).
	\]
	Therefore,
	\[
	\pi^{[l]} (a_1 , \ldots, a_{2l}) = -\pi^{[l-1]} \wedge X_{a_{2l} } (a_1, \ldots , a_{2l-1}).
	\]
\end{proof}

\begin{proposition}\label{hamiltonvanish}
	Let $\sigma$ be a Poisson $\TT$-Pfaffian and $V$ a Hamiltonian derivation or a derivation generated by some element of the Lie algebra of $\mathbb{T}$ for the torus action. One has $[V, \sigma] = 0$.
\end{proposition}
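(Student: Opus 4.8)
The plan is to reduce the statement to two facts: that the Schouten bracket with a vector field is an ungraded derivation of the wedge product, and that both admissible classes of $V$ are Poisson derivations of $A$. Write $\sigma = \pi^{[r]} \wedge \partial_{h_1} \wedge \cdots \wedge \partial_{h_l}$, where $2r = \rank(\pi)$.

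First I would record that $[V, \pi^{[r]}] = 0$ for \emph{any} Poisson derivation $V$ of $A$. The point is that the Schouten bracket of the degree-one element $V$ with the $2r$-derivation $\pi^{[r]}$ is the Lie derivative,
\[
[V, \pi^{[r]}](a_1, \ldots, a_{2r}) = V\bigl(\pi^{[r]}(a_1, \ldots, a_{2r})\bigr) - \sum_{k=1}^{2r} \pi^{[r]}(a_1, \ldots, V(a_k), \ldots, a_{2r}),
\]
as follows from the defining formula of the Schouten bracket with $m=1$. Expanding $\pi^{[r]}$ via its defining sum of products of $\pi(\cdot,\cdot)$ and using the Leibniz rule for $V$ on $A$ together with $V\pi(a,b) = \pi(Va,b) + \pi(a,Vb)$, the two sums cancel term by term. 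It is essential to argue directly here rather than through $\pi^{\wedge r} = r!\,\pi^{[r]}$ of Lemma \ref{le-pi-r}, since $r$ is not controlled relative to $p$ and $r!$ may vanish modulo $p$; this is the one genuinely delicate point of the proof. Now a Hamiltonian derivation $X_a$ is a Poisson derivation (as noted after the definition of Poisson $\TT$-Pfaffian), and a derivation $\partial_h$ with $h \in \mathfrak{t}$ coming from a rational Poisson torus action is a Poisson derivation by Lemma \ref{9}; so $[V, \pi^{[r]}] = 0$ in both cases.

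Next, using that $[V,-]$ is a derivation of the wedge product of multiderivations (with no sign, since $V$ has shifted degree zero in the Gerstenhaber algebra $\mathfrak{X}(A)$),
\[
[V, \sigma] = [V, \pi^{[r]}] \wedge \partial_{h_1} \wedge \cdots \wedge \partial_{h_l} + \sum_{j=1}^{l} \pi^{[r]} \wedge \partial_{h_1} \wedge \cdots \wedge [V, \partial_{h_j}] \wedge \cdots \wedge \partial_{h_l}.
\]
The first term vanishes by the previous paragraph. For the remaining terms, if $V = \partial_h$ then $[\partial_h, \partial_{h_j}] = 0$, because $\partial_h$ and $\partial_{h_j}$ act as scalars on each weight space and hence commute as operators (this is where the torus enters), so every term is zero. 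If $V = X_a$ is Hamiltonian, a short computation using Lemma \ref{9} gives $[X_a, \partial_{h_j}](b) = \{a, \partial_{h_j}(b)\} - \partial_{h_j}\{a,b\} = -\{\partial_{h_j}(a), b\}$, i.e. $[X_a, \partial_{h_j}] = -X_{\partial_{h_j}(a)}$, again a Hamiltonian derivation; moving this factor next to $\pi^{[r]}$ at the cost of a sign produces $\pm\bigl(\pi^{[r]} \wedge X_{\partial_{h_j}(a)}\bigr) \wedge \partial_{h_1} \wedge \cdots \wedge \widehat{\partial_{h_j}} \wedge \cdots \wedge \partial_{h_l}$, which vanishes because $\pi^{[r]} \wedge X_c = 0$ for every $c \in A$ by Proposition \ref{10}, $2r$ being the rank of $\pi$. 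Hence all terms vanish and $[V, \sigma] = 0$.

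In summary, the only step requiring care is the characteristic-$p$ argument that $[V, \pi^{[r]}] = 0$, which must be carried out via the multiderivation identity for $\pi^{[r]}$ directly; the rest is bookkeeping with the derivation property of the Schouten bracket and one application each of Lemma \ref{9} and Proposition \ref{10}.
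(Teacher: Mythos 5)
Your proof is correct and takes essentially the same approach as the paper's: expand $[V,\sigma]$ by the derivation property of $[V,-]$ over the wedge product, kill the $\pi$-part because $V$ is a Poisson derivation (Lemma \ref{9} for the torus case, the standard fact $[X_a,\pi]=0$ for the Hamiltonian case), and kill each $[V,\partial_{h_j}]$ term via Proposition \ref{10} in the Hamiltonian case or by commutativity in the torus case. Your direct term-by-term check that $[V,\pi^{[r]}]=0$ is just a more explicit (and characteristic-safe) rendering of the paper's leading term $[V,\pi]\wedge\pi^{[r-1]}\wedge\partial_{h_1}\wedge\ldots\wedge\partial_{h_l}$, an identity that in fact holds with integer coefficients, so the two arguments coincide in substance.
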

\begin{proof}
	By definition,
	\[
	[V, \sigma ] =  [V ,\pi ] \wedge \pi^{[r-1]} \wedge \partial_{h_1} \wedge \ldots \wedge \partial_{h_l} \\
	+ \sum_{i=1}^l \pi^{[r]} \wedge \partial_{h_1} \wedge \ldots \wedge [V, \partial_{h_i} ] \wedge \ldots \wedge \partial_{h_l}.
	\]
	If $V$ is Hamiltonian, then $[V,\pi]$ is zero. On the other hand, $V$ is Hamiltonian implies $[V,\partial_{h_i}]$ is Hamiltonian. Then $\pi^{[r]} \wedge [V,\partial_{h_i}]$ is also zero by Proposition \ref{10}.	If $V$ is generated by some element of the Lie algebra of $\mathbb{T}$ for the torus action, then both $[V,\pi]$ and $[V,\partial_{h_i}]$ are zero.
\end{proof}

\begin{remark}
{\rm	It is straightforward to see that the results in this section generalize to $\mathbb{T}$-Poisson varieties.}
\end{remark}

\subsection{Rank of Poisson CGL extensions}\label{CGL}
We first recall from \cite[Section 1.3]{PoissonCGL} the definition of Poisson CGL extensions.
\begin{definition}
	\label{1}
	{\rm A Poisson CGL extension is a polynomial algebra $k[x_1, \ldots, x_n]$ with a $\TT$-Poisson structure for which $x_1, \ldots, x_n$ are $\TT$-eigenvectors, where $\TT$ is an algebraic torus, together with elements $h_1, \ldots, h_n \in \mathfrak{t}$ such that the following conditions are satisfied for $1 \leq i \leq n$:
	
	(a) For $i<j$, $\{ x_i , x_j\} = \lambda_j(h_i)x_ix_j+\delta_i(x_j)$ for some $\lambda_i \in \mathfrak{t}^{*}$ and some locally nilpotent derivation $\delta_i$ on $k[x_{i+1}, \ldots, x_n]$ with $\delta_i(x_j) \in k[x_{i+1}, \ldots, x_{j-1}]$.
	
	(b) The $h_i$-eigenvalue of $x_i$ is non-zero, i.e. $\lambda_i(h_i) \neq 0$.}
\end{definition}

\begin{remark}
{\rm	A Poisson CGL extension is essentially an affine space equipped with a special $\mathbb{T}$-Poisson structure. Thus the notions of Poisson $\mathbb{T}$-Pfaffians and Frobenius Poisson $\mathbb{T}$-Pfaffians make sense for Poisson CGL extensions. In this case, the vector field $\partial_{h}$ generated by an element $h \in \mathfrak{t}$ is given by
	\[
	\partial_h = \lambda_1(h)x_1\partial_1 + \lambda_2(h)x_2\partial_2 + \ldots + \lambda_n(h)x_n\partial_n.
	\]
}
\end{remark}

\begin{definition}\label{PCGLV}
{\rm
The Poisson bivector of a Poisson CGL extension, which is written as
\[
\pi=\sum_{i<j}( \lambda_j (h_i)x_ix_j + \delta_i (x_j) ) \partial_i \wedge \partial_j,
\]
is called \emph{a Poisson CGL bivector field}.}
\end{definition}
\begin{remark}\label{lc}
{\rm
The bivector field defined by
\[
\pi_0= \sum_{i<j} \lambda_j (h_i) x_i x_j \partial_i \wedge \partial_j.
\]
is a Poisson bivector field, which is called \emph{the log-canonical part} of $\pi$.
The bivector field $\sum_{i<j}(  \delta_i (x_j) ) \partial_i \wedge \partial_j$ is not necessarily a Poisson vector field, called\emph{ the $\delta$-part} of $\pi$.}
\end{remark}

\begin{proposition}
	\label{5}
	The rank of the Poisson bivector $\pi$ of a Poisson CGL extension is greater than or equal to that of $\pi_0$.
\end{proposition}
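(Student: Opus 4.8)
The plan is to realise $\pi_0$ as the top-weight part of $\pi$ with respect to a carefully chosen grading on $A=k[x_1,\dots,x_n]$, and then to show that the operation $(-)^{[r]}$ respects this grading, so that $\pi_0^{[r]}$ is recovered as a graded component of $\pi^{[r]}$. Concretely, I would pick positive integers with $d_1\gg d_2\gg\dots\gg d_n$ and grade $A$ by setting $\deg x_i=d_i$. For instance $d_i=M^{\,n-i}$ works once $M$ exceeds the maximal total degree $D$ of the finitely many polynomials $\delta_i(x_j)$, $1\le i<j\le n$: since $\delta_i(x_j)\in k[x_{i+1},\dots,x_{j-1}]$, every monomial $m$ occurring in $\delta_i(x_j)$ satisfies $\deg m\le D\,M^{\,n-i-1}<M^{\,n-i}=d_i<d_i+d_j$. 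With this grading $A=\bigoplus_{m\ge 0}A_m$, and the free $A$-module $\fX^k(A)$ with basis $\{\partial_{i_1}\wedge\dots\wedge\partial_{i_k}:i_1<\dots<i_k\}$ becomes graded once we declare $\deg\partial_i=-d_i$; the wedge product is compatible with this grading.

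Next, in the decomposition $\pi=\pi_0+\pi_\delta$ of Remark \ref{lc}, the bivector field $\pi_0=\sum_{i<j}\lambda_j(h_i)x_ix_j\,\partial_i\wedge\partial_j$ is homogeneous of weight $0$, since each summand has weight $(d_i+d_j)+(-d_i-d_j)=0$, whereas every monomial of $\delta_i(x_j)$ has weight strictly less than $d_i+d_j$, so $\pi_\delta\in\bigoplus_{w<0}\fX^2(A)_w$. Hence, for homogeneous elements $a_1,\dots,a_{2r}\in A$, in the defining formula
\[
\pi^{[r]}(a_1,\dots,a_{2r})=\sum_{\sigma\in S_{[2r]}}({\rm sgn}\,\sigma)\,\pi(a_{\sigma(1)},a_{\sigma(2)})\cdots\pi(a_{\sigma(2r-1)},a_{\sigma(2r)}),
\]
the homogeneous component of $\pi^{[r]}(a_1,\dots,a_{2r})$ lying in $A_{\deg a_1+\dots+\deg a_{2r}}$ is obtained by using $\pi_0$ rather than $\pi_\delta$ in every factor, because $\pi_\delta(a,b)$ always lies in weight strictly below $\deg a+\deg b$; that component therefore equals $\pi_0^{[r]}(a_1,\dots,a_{2r})$. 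Since every element of $A$ is a $k$-linear combination of homogeneous ones and multiderivations are multilinear, this identifies the weight-$0$ graded component of $\pi^{[r]}\in\fX^{2r}(A)$ with $\pi_0^{[r]}$.

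Finally, writing $2r=\rank(\pi_0)$, one has $\pi_0^{[r]}\neq 0$ by the definition of the rank; consequently $\pi^{[r]}\neq 0$, since it has a nonzero graded component, and therefore $\rank(\pi)\ge 2r=\rank(\pi_0)$, which is exactly the assertion.

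I expect the only delicate point to be positive characteristic: one cannot write $\pi^{[r]}=\frac{1}{r!}\pi^{\wedge r}$, so the compatibility of $(-)^{[r]}$ with the grading, equivalently the extraction of the top-weight part of $\pi^{[r]}$, must be argued directly from the shuffle formula as above — or, alternatively, from the characteristic-free expansion $(\pi_0+\pi_\delta)^{[r]}=\sum_{k=0}^{r}\pi_0^{[k]}\wedge\pi_\delta^{[r-k]}$, obtained by splitting $S_{[2r]}$ according to which of the $r$ pairs are evaluated against $\pi_0$ and which against $\pi_\delta$. Everything else is routine bookkeeping with the grading and the choice of the weights $d_i$.
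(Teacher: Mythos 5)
Your proof is correct. It uses the same decomposition $\pi=\pi_0+\pi_\delta$ and the same leading-term philosophy as the paper, but a different bookkeeping device: the paper expands $\pi^{[r]}=\sum_{l}\pi_0^{[l]}\wedge w^{[r-l]}$ with $w=\pi-\pi_0$, evaluates on coordinate differentials $dx_{i_1},\ldots,dx_{i_{2r}}$, and notes that every mixed term omits the largest-index variable among its arguments (since $\delta_a(x_b)\in k[x_{a+1},\ldots,x_{b-1}]$), so the coefficient of the square-free monomial $x_{i_1}\cdots x_{i_{2r}}$ in $\pi^{[r]}$ coincides with that in $\pi_0^{[r]}$, which is a nonzero scalar times that monomial when $\pi_0^{[r]}\neq 0$. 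You instead introduce a weighted grading $\deg x_i=M^{\,n-i}$ with $M$ exceeding the degrees of the finitely many $\delta_i(x_j)$, so that $\pi_0$ is the weight-zero part and $\pi_\delta$ has strictly negative weight, and you identify $\pi_0^{[r]}$ with the weight-zero graded component of $\pi^{[r]}$; working factor-by-factor in the shuffle formula on homogeneous arguments makes this manifestly characteristic-free and even avoids the expansion identity. What each buys: your associated-graded argument is more conceptual and robust (it exhibits $\pi_0^{[r]}$ literally as a graded piece of $\pi^{[r]}$), at the mild cost of choosing weights via a degree bound; the paper's coefficient comparison is weight-free and its intermediate statement --- that the coefficient of $x_{j_1}\cdots x_{j_{2r}}\,\partial_{j_1}\wedge\cdots\wedge\partial_{j_{2r}}$ in $\pi^{[r]}$ equals that in $\pi_0^{[r]}$ --- is precisely what is reused later in the proof of Theorem \ref{A}. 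Your grading statement recovers that coefficient identity too (the weight-zero part of each component is exactly the corresponding square-free monomial term), so nothing needed downstream is lost.
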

\begin{proof}
	Suppose $\pi^{[ r]}=0$ for some non-negative integer $r$. Let $w=\pi - \pi_0 = \sum_{i<j} \delta_i (x_j) \partial_i \wedge \partial_j$. Then
	\[
	\pi^{[r]}= \pi_0^{[r]} + \sum_{l=0}^{r-1}  \pi_0^{[l]} \wedge w^{[r-l]}.
	\]
	Let $1 \leq  i_1 <  \ldots < i_{2r } \leq n$. For $0 \leq l \leq r-1$, one has
	\[
	\begin{aligned}
	&\pi_0^{[l]} \wedge w^{[r-l]} (dx_{i_1} \wedge \ldots \wedge dx_{i_{2r}}) \\
	=&\sum_{\sigma \in S_{2l,2r-2l}} \pi_0^{[j]}(dx_{i_{\sigma(1)}} \wedge \ldots \wedge dx_{i_{\sigma(2l)}}) w^{[r-j]} (dx_{i_{\sigma(2l+1)}} \wedge \ldots \wedge dx_{i_{\sigma(2r)}}).
	\end{aligned}
	\]
	For each $\sigma \in S_{2l,2r-2l}$, we know that $\pi_0^{[l]}(dx_{i_{\sigma(1)}} \wedge \ldots \wedge dx_{i_{\sigma(2l)}})$ is a scalar multiple of $x_{i_{\sigma(1)}} \ldots x_{i_{\sigma(2l)}}$. However $w^{[r-l]} (dx_{i_{\sigma(2l+1)}} \wedge \ldots \wedge dx_{i_{\sigma(2r)}})$ does not involve the variable $x_{i_{\sigma(2r)}}$ since $\delta_i(x_j)$ does not involve the variable $x_j$ for every $i < j$. Therefore the coefficient of $x_{i_{1}} \ldots x_{i_{2r}} $ in $\pi_0^{[l]} \wedge w^{[r-l]} (dx_{i_1} \wedge \ldots \wedge dx_{i_{2r}})$ is zero. As $\pi_0(dx_{i_1} \wedge \ldots \wedge dx_{i_{2r}})$ is a scalar multiple of $x_{i_{1}} \ldots x_{i_{2r}}$, our assumption implies that $\pi_0(dx_{i_1} \wedge \ldots \wedge dx_{i_{2r}})=0$ for every $1 \leq  i_1 <  \ldots < i_{2r } \leq n$. Hence $\pi_0^{[r]}=0$.
\end{proof}

Let $(k[x_1, \ldots, x_n], \pi)$ be a Poisson CGL extension.  Let $e_i=x_i \partial_i$ and $V$ be the $n$-dimensional $k$-vector space with basis $\{ e_1, \ldots, e_n \}$. Then $\pi_0$, the log-canonical term of $\pi$, is represented by the skew-symmetric matrix $\Omega-\Omega^T$, where $\Omega$ is the $n \times n$ strictly upper triangular matrix with entries $\lambda_i (h_j)$ for $i<j$. Suppose the rank of the matrix $\pi_0$ is $2r$, let $v_1, \ldots, v_{n-2r}$ be $n-2r$ elements in $V$. Note that with the basis $e_i$'s, the $v_i$'s can be regarded as size $n$ column vectors. One can define an $n \times (2n-2r)$-matrix $M$ as
\[
M= \begin{pmatrix}
\Omega-\Omega^T &v_1 & v_2 & \ldots & v_{n-2r}  \\
\end{pmatrix}.
\]

\begin{lemma}\label{fullrank}
	The element $\pi_0^{[r]} \wedge v_1 \wedge \ldots \wedge v_{n-2r} \in \wedge^{n} V$ is non-zero if and only if $M$ is of full rank.
\end{lemma}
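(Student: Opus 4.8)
The plan is to show that $\pi_0^{[r]}$ is, up to a nonzero scalar, a \emph{decomposable} element of $\wedge^{2r}V$ whose linear span is exactly the column space of $\Omega-\Omega^{T}$, and then to reduce the assertion to the elementary fact that a wedge of $n$ vectors in an $n$-dimensional space is nonzero iff those vectors are linearly independent.

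First I would put $\pi_0$ into normal form. Being an alternating form of rank $2r$, $\pi_0\in\wedge^{2}V$ admits a basis $f_1,\dots,f_n$ of $V$ with $\pi_0=\sum_{i=1}^{r}f_{2i-1}\wedge f_{2i}$. Expanding $\pi_0^{\wedge r}$ and deleting every summand containing a repeated factor $f_k$ (which vanishes) leaves $\pi_0^{\wedge r}=r!\,f_1\wedge\cdots\wedge f_{2r}$, so by Lemma \ref{le-pi-r}, whose proof applies verbatim in the exterior algebra $\wedge^{\bullet}V$,
\[
\pi_0^{[r]}=f_1\wedge\cdots\wedge f_{2r}\in\wedge^{2r}V .
\]
Hence $\pi_0^{[r]}$ is nonzero and decomposable; set $W:=\Span\{f_1,\dots,f_{2r}\}$, a $2r$-dimensional subspace of $V$.

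Second I would identify $W$ with the column space of $\Omega-\Omega^{T}$. Since $\pi_0$ is represented by $\Omega-\Omega^{T}$, the column space of $\Omega-\Omega^{T}$ is (up to the transpose, which is harmless because the matrix is skew-symmetric, so column space equals row space) the image of the contraction map $V^{*}\to V,\ \xi\mapsto\iota_\xi\pi_0$; evaluating this map in the Darboux basis gives $\iota_{f_{2i-1}^{*}}\pi_0=f_{2i}$, $\iota_{f_{2i}^{*}}\pi_0=-f_{2i-1}$ for $i\le r$, and $\iota_{f_j^{*}}\pi_0=0$ for $j>2r$, so the image is $W$. Consequently the column space of $M$ equals $W+\Span\{v_1,\dots,v_{n-2r}\}$, and $M$ has full rank iff $W+\Span\{v_1,\dots,v_{n-2r}\}=V$; here ``full rank'' means rank $n$, because $M$ has $2n-2r\ge n$ columns.

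Finally I would conclude. Using $\pi_0^{[r]}=f_1\wedge\cdots\wedge f_{2r}$,
\[
\pi_0^{[r]}\wedge v_1\wedge\cdots\wedge v_{n-2r}=f_1\wedge\cdots\wedge f_{2r}\wedge v_1\wedge\cdots\wedge v_{n-2r}\in\wedge^{n}V ,
\]
and this is nonzero iff the $n$ vectors $f_1,\dots,f_{2r},v_1,\dots,v_{n-2r}$ are linearly independent, iff they form a basis of $V$, iff they span $V$ (the count $2r+(n-2r)=n$ makes these three conditions equivalent, and also forces $v_1,\dots,v_{n-2r}$ to be independent and to span a complement of $W$). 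The last condition is precisely $W+\Span\{v_1,\dots,v_{n-2r}\}=V$, so comparing with the previous paragraph proves the equivalence. I do not anticipate a genuine obstacle; the only points requiring care are the identification of the image of the contraction map with the column space of $\Omega-\Omega^{T}$, and the fact that ``full rank'' for a matrix with more columns than rows means full \emph{row} rank.
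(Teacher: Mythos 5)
Your argument follows essentially the same route as the paper's proof: both exhibit $\pi_0^{[r]}$ as a nonzero decomposable element of $\wedge^{2r}V$ whose span is the column space of $\Omega-\Omega^{T}$ (the paper phrases this as ``$\pi_0\in\wedge^2\,\mathrm{im}(\pi_0)$, so $\pi_0^{[r]}$ is a nonzero multiple of $w_1\wedge\cdots\wedge w_{2r}$'', the $w_i$ being columns of the matrix), and then both reduce the statement to linear independence of $n$ vectors in the $n$-dimensional space $V$. Your contraction-map identification of $\Span\{f_1,\dots,f_{2r}\}$ with the column space and your remark that full rank means full row rank are fine; in fact your normal-form computation supplies the nonvanishing of $\pi_0^{[r]}$ that the paper leaves implicit.

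The one step that needs repair is the deduction of $\pi_0^{[r]}=f_1\wedge\cdots\wedge f_{2r}$ from $\pi_0^{\wedge r}=r!\,f_1\wedge\cdots\wedge f_{2r}$ together with Lemma \ref{le-pi-r}: this requires cancelling $r!$, which is impossible when $\mathrm{char}(k)=p\le r$, since then both sides of $r!\,\pi_0^{[r]}=r!\,f_1\wedge\cdots\wedge f_{2r}$ vanish and the identity is vacuous. The ground field here has positive characteristic ($p>2$, and $p>3$ in the applications), while $r$ is half the rank of a Poisson CGL extension in $n$ variables and can easily exceed $p$ (for instance on the big cell of $G/B$ with $w_0$ long and $p=5$), so this case genuinely occurs in the situations the lemma is used for. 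The fix is immediate, and it is exactly the reason the paper works throughout with the divided power $\pi^{[r]}$ rather than $\pi^{\wedge r}$: compute $\pi_0^{[r]}$ directly from its defining shuffle sum in the basis $f_1,\dots,f_n$. For $\pi_0=\sum_{i=1}^{r}f_{2i-1}\wedge f_{2i}$ each square-free monomial in the expansion occurs with coefficient exactly $1$, so $\pi_0^{[r]}=f_1\wedge\cdots\wedge f_{2r}$ in any characteristic. With that substitution your proof is complete and coincides in substance with the paper's.
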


\begin{proof}
	Denote the columns of $\pi_0$ by $w_1, \ldots, w_n$. Since the rank of $\pi_0$ is $2r$, without loss of generality, we may assume that the image of $\pi_0$ is $Span\{w_1, \ldots, w_{2r} \}$. Since $\pi_0 \in \wedge^2  \ im(\pi_0)$, $\pi_0^{[r]}$ is a nonzero multiple of $w_1 \wedge \ldots \wedge w_{2r}$. Hence $\pi_0^{[r]} \wedge v_1 \wedge \ldots \wedge v_{n-2r} \neq 0$ if and only if $w_1, \ldots, w_{2r} , v_1, \ldots, v_{n-2r}$ are linearly independent, which is equivalent to saying that $M$ is of full rank.
\end{proof}

\begin{lemma}\label{polydegree}
	Suppose $\pi^{[r]} \wedge \partial_{h_{i_1}} \wedge \ldots \wedge \partial_{h_{i_{n-2r}}} = f \partial_{1} \wedge \ldots \wedge \partial_{n}$ is non-zero for some integers $\{i_1, \ldots, i_{n-2r} \} \subset [1,n]$, where $f \in k[x_1, \ldots, x_n]$. Let
	$g = c x_1^{t_1}\ldots x_n^{t_n}$ be a non-zero monomial term in $f$, where $c \in k$. Then either $(t_1, t_2, \ldots, t_n) = (1, 1, \ldots, 1)$ or there exists some integer $s$ in $[1, n]$ such that $t_s = 0$ and $t_{j} \leq 1$ for all $j > s$.
\end{lemma}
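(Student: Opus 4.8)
The plan is to write out $\pi^{[r]}\wedge\partial_{h_{i_1}}\wedge\cdots\wedge\partial_{h_{i_{n-2r}}}$ explicitly in the polynomial ring $A=k[x_1,\dots,x_n]$, read off the shape of every monomial that can occur in $f$, and then inspect its exponent vector. The two local formulas I will use are $\pi=\sum_{i<j}\bigl(\lambda_j(h_i)x_ix_j+\delta_i(x_j)\bigr)\partial_i\wedge\partial_j$ with $\delta_i(x_j)\in k[x_{i+1},\dots,x_{j-1}]$, and $\partial_h=\sum_{m=1}^n\lambda_m(h)x_m\partial_m$ for $h\in\ft$. First I would use Lemma~\ref{le-pi-r} to replace $\pi^{[r]}$ by $\tfrac1{r!}\pi^{\wedge r}$ and expand $\pi^{\wedge r}\wedge\partial_{h_{i_1}}\wedge\cdots\wedge\partial_{h_{i_{n-2r}}}$; since $\mathfrak{X}^{n}(A)$ is free of rank one on $\partial_1\wedge\cdots\wedge\partial_n$, the coefficient of $\partial_1\wedge\cdots\wedge\partial_n$ is exactly $f$, and the only surviving contributions come from choosing $r$ pairwise disjoint index pairs $\{a_l,b_l\}$ with $a_l<b_l$, whose union I call $S$, with the remaining $n-2r$ indices $T:=[1,n]\setminus S$ being supplied, one each, by the $\partial_{h_{i_k}}$. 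Hence $f$ is a sum of scalar multiples of $\bigl(\prod_{l=1}^r(\lambda_{b_l}(h_{a_l})x_{a_l}x_{b_l}+\delta_{a_l}(x_{b_l}))\bigr)\cdot\prod_{c\in T}x_c$, and, expanding the factors $\pi_{a_lb_l}$ further, every monomial $g$ appearing in $f$ equals a nonzero scalar times $\prod_{l\in L}x_{a_l}x_{b_l}\cdot\prod_{l\in D}m_l\cdot\prod_{c\in T}x_c$ for some partition $\{1,\dots,r\}=L\sqcup D$, where each $m_l$ with $l\in D$ is a monomial in the variables $x_{a_l+1},\dots,x_{b_l-1}$ only.

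Next I would split into cases on $D$. If $D=\emptyset$, then since the pairs $\{a_l,b_l\}$ ($l\in L$) together with $T$ partition $[1,n]$, the monomial $g$ is a scalar times $\prod_{i=1}^n x_i$, so $(t_1,\dots,t_n)=(1,\dots,1)$. If $D\neq\emptyset$, put $s:=\max_{l\in D}b_l$ and fix $l^*\in D$ with $b_{l^*}=s$; I claim this $s$ works. First, $t_s=0$: the variable $x_s$ is not among the $x_c$ with $c\in T$ (as $s\in S$), not among the $x_{a_l},x_{b_l}$ with $l\in L$ (the pairs being disjoint, $s$ lies only in the pair $\{a_{l^*},b_{l^*}\}$, which is indexed by $D$), and not in any $m_l$ with $l\in D$ (that would need $a_l<s<b_l$, hence $b_l>s$ with $l\in D$, contradicting the maximality of $s$; and $m_{l^*}$ involves only variables of index $<b_{l^*}=s$). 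Next, $t_j\le 1$ for every $j>s$: fix such a $j$. No $m_l$ involves $x_j$, since that would require $a_l<j<b_l$, giving $b_l>s$ with $l\in D$. If $j\in T$ then $x_j$ occurs exactly once, in $\prod_{c\in T}x_c$; if $j\notin T$ then $j$ lies in exactly one pair $\{a_l,b_l\}$, and necessarily $l\in L$, because $l\in D$ would force $j\in\{a_l,b_l\}$ hence $b_l>s$ with $l\in D$ — a contradiction; so $x_j$ occurs exactly once, in $x_{a_l}x_{b_l}$. In either case $t_j=1\le 1$.

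Finally, since $f$ is the sum of the contributions described above, any monomial $g$ with nonzero coefficient in $f$ coincides with the monomial of at least one such contribution, so the conclusion for contributions yields the conclusion for $g$. I do not expect a genuine difficulty here: the argument is elementary once the shape of the monomials of $f$ is pinned down. The only place that needs care is the case $D\neq\emptyset$, namely ruling out a hidden extra occurrence of $x_s$, or a second occurrence of $x_j$ for $j>s$, inside one of the $\delta$-monomials $m_l$ — and this is precisely where the support condition $\delta_i(x_j)\in k[x_{i+1},\dots,x_{j-1}]$ and the choice $s=\max_{l\in D}b_l$ are used.
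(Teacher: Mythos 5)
Your argument is, in substance, the same as the paper's: you partition the $n$ indices into log-canonical pairs, $\delta$-pairs and the singletons supplied by the $\partial_{h_{i_k}}$, take $s$ to be the largest ``second index'' $b_l$ of a $\delta$-pair, and invoke the support condition $\delta_i(x_j)\in k[x_{i+1},\ldots,x_{j-1}]$ to kill $x_s$ and bound the exponents of $x_j$ for $j>s$; the paper does exactly this, only organizing the expansion as $\pi^{[r]}=\sum_l \pi_0^{[l]}\wedge w^{[r-l]}$ and indexing the terms by permutations $\sigma\in S_n$ as in expression (\ref{threeproduct}), and your case $D=\emptyset$ versus $D\neq\emptyset$ matches its case $l=r$ versus $l<r$. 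The final remark that a monomial with non-zero coefficient in $f$ must occur in at least one contribution correctly handles possible cancellations.

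The one step that does not survive scrutiny as written is the very first: invoking Lemma \ref{le-pi-r} to replace $\pi^{[r]}$ by $\tfrac{1}{r!}\pi^{\wedge r}$. The ground field has characteristic $p>2$ (indeed $p>3$ in the applications), and nothing bounds $r$ by $p$; when $r\geq p$ one has $r!=0$ in $k$, so $\pi^{\wedge r}=r!\,\pi^{[r]}=0$ even though $\pi^{[r]}$ may be non-zero, and the substitution is meaningless. This situation genuinely occurs in the paper's applications (Bott--Samelson varieties and $G/U$ of large dimension with a fixed small $p$). The repair is immediate and changes nothing downstream: expand $\pi^{[r]}$ directly from its definition as a signed sum over the shuffles $S_{[2r]}$ (equivalently, pair with $dx_1\wedge\ldots\wedge dx_n$ as the paper does); each resulting term is still a product of the coefficients $\lambda_{b_l}(h_{a_l})x_{a_l}x_{b_l}+\delta_{a_l}(x_{b_l})$ over $r$ pairwise disjoint pairs times the factors $\lambda_c(h_{i_k})x_c$ on the complementary indices, which is exactly the description your monomial analysis needs. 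With that substitution your proof is complete and coincides with the paper's.
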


\begin{proof}
	Following the notation in the proof of Proposition \ref{5}, write
	\[
	\pi^{[r]} \wedge \partial_{h_{i_1}} \wedge \ldots \wedge \partial_{h_{i_{n-2r}}}=  \sum_{l=0}^{r}  \pi_0^{[l]} \wedge w^{[r-l]} \wedge \partial_{h_{i_1}} \wedge \ldots \wedge \partial_{h_{i_{n-2r}}}.
	\]
	Then the pairing with $dx_1 \wedge \ldots \wedge dx_n$ of each term in the expansion of the expression
	\[
	\pi_0^{[l]} \wedge w^{[r-l]} \wedge \partial_{h_{i_1}} \wedge \ldots \wedge \partial_{h_{i_{n-2r}}}
	\]
	is equal to a scalar multiple of
	\begin{equation}\label{threeproduct}
	\left (\prod_{j = 1}^{l} \lambda_{\sigma(2j - 1)}(h_{\sigma(2j)})x_{\sigma(2j-1)}x_{\sigma(2j)}\right ) \left ( \prod_{j = l+1}^{r} \delta_{\sigma(2j - 1)}(x_{\sigma(2j)}) \right ) \left (\prod_{j = 1}^{n - 2r} \lambda_{\sigma(2r + j)}(h_{i_j})x_{\sigma(2r + j)} \right )
	\end{equation}
	for some $\sigma \in S_n$ such that $\sigma(2j - 1) < \sigma(2j)$ for $j = l + 1, l+2, \ldots, r$.
	To prove the statement, it suffices to prove that when $g = c x_1^{t_1}\ldots x_n^{t_n}$ is a non-zero monomial term in the product above, where $c \in k$, then either $(t_1, t_2, \ldots, t_n) = (1, 1, \ldots, 1)$ or there exists some integer $s$ in $[1, n]$ such that $t_s = 0$ and $t_{j} \leq 1$ for all $j > s$.
	
	In the case when $l = r$, it is obvious $t_1 = \ldots = t_n = 1$.
	In the case when $l < r$, choose $j$ in $l+1,l+2,\ldots, r$ so that $\sigma(2j)$ is maximal among $\sigma(2l + 2), \sigma(2l+4), \ldots, \sigma(2r)$. Let $s$ be $\sigma(2j)$. Then in the second bracket in expression \ref{threeproduct}, the degrees of $x_s, x_{s+1}, \ldots, x_n$ are all zero.
	In the product of the first and the third bracket in expression \ref{threeproduct}, the degree of $x_s$ is zero and the degrees of $x_{s+1}, \ldots, x_n$ are at most one.
\end{proof}

\subsection{Proof of Theorem C}
For the convenience of the readers, we first restate the Theorem C.
\begin{theorem}\label{A}
If the rank of a Poisson CGL bivector field $\pi_X$ is equal to that of $(\pi_X)_0$, the log-canonical part of $\pi_X$, then a Frobenius Poisson $\TT$-Pfaffian exists.
\end{theorem}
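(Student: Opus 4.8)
The plan is to exploit the explicit form of a Poisson CGL bivector field to reduce Theorem~\ref{A} to a combinatorial statement about monomials. The underlying variety of a Poisson CGL extension $(k[x_1,\dots,x_n],\pi_X)$ is the affine space $\mathbb A^n$, on which $\omega$ is trivialized by $dx_1\wedge\cdots\wedge dx_n$; hence every element of $\mathfrak X^{top}(k[x_1,\dots,x_n])=\mathfrak X^n$ has the form $\sigma=f\,\partial_1\wedge\cdots\wedge\partial_n$ for a unique $f\in k[x_1,\dots,x_n]$, and $\sigma^{p-1}$ corresponds to the section $f^{p-1}$ of $\omega^{1-p}$. By the standard description of Frobenius splittings of affine space (\cite[\S1.3]{BK}), and since scalar multiples of Poisson $\TT$-Pfaffians are again Poisson $\TT$-Pfaffians while $k$ is algebraically closed, the near-splitting $\phi_\sigma$ is a scalar multiple of a Frobenius splitting precisely when the coefficient of $(x_1\cdots x_n)^{p-1}$ in $f^{p-1}$ is non-zero. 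So it suffices to build a Poisson $\TT$-Pfaffian $\sigma=\pi_X^{[r]}\wedge\partial_{h_1}\wedge\cdots\wedge\partial_{h_{n-2r}}=f\,\partial_1\wedge\cdots\wedge\partial_n$, where $2r=\rank(\pi_X)$, whose polynomial $f$ has this property.

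\textbf{Existence of a non-zero Pfaffian.} Put $V=\bigoplus_i k\,e_i$ with $e_i=x_i\partial_i$, so that the log-canonical part $(\pi_X)_0$ is represented (with respect to the $e_i$) by the skew matrix $\Omega-\Omega^T$ of the discussion preceding Lemma~\ref{fullrank}, and $\partial_h=\sum_i\lambda_i(h)e_i$ for $h\in\mathfrak t$. The crucial linear-algebra input is that the subspace $L=\{\partial_h:h\in\mathfrak t\}\subseteq V$ satisfies $L+\mathrm{im}((\pi_X)_0)=V$: a covector annihilating both $L$ and $\mathrm{im}((\pi_X)_0)$ satisfies, because $\Omega$ is strictly triangular, a triangular linear system whose $i$-th diagonal coefficient is $\pm\lambda_i(h_i)\neq0$ by condition (b) of Definition~\ref{1}, and hence must vanish. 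Since $\rank((\pi_X)_0)=2r$ by hypothesis, I may choose $h_1,\dots,h_{n-2r}\in\mathfrak t$ so that $\partial_{h_1},\dots,\partial_{h_{n-2r}}$ complete $\mathrm{im}((\pi_X)_0)$ to a basis of $V$; by Lemma~\ref{fullrank}, $(\pi_X)_0^{[r]}\wedge\partial_{h_1}\wedge\cdots\wedge\partial_{h_{n-2r}}\neq0$ in $\wedge^nV$. Now expand $\pi_X^{[r]}=(\pi_X)_0^{[r]}+\sum_{l=0}^{r-1}(\pi_X)_0^{[l]}\wedge w^{[r-l]}$ as in the proof of Proposition~\ref{5}, where $w$ is the $\delta$-part. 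The term $(\pi_X)_0^{[r]}\wedge\partial_{h_1}\wedge\cdots\wedge\partial_{h_{n-2r}}$ contributes to $f$ a non-zero scalar multiple of $x_1\cdots x_n$ (each factor being a scalar times a single variable or a product of two variables), while, by the argument in the proof of Lemma~\ref{polydegree}, every term involving $w$ contributes only monomials that omit some variable. Hence the coefficient of $x_1\cdots x_n$ in $f$ is non-zero; in particular $f\neq0$, and $\sigma$ is an honest Poisson $\TT$-Pfaffian.

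\textbf{The splitting condition.} It remains to check the monomial condition on $f^{p-1}$. By Lemma~\ref{polydegree}, write $f=c\,x_1\cdots x_n+\sum_\alpha c_\alpha x^\alpha$ with $c\neq0$, each $\alpha$ in the sum having an index $s_\alpha$ with $\alpha_{s_\alpha}=0$ and $\alpha_j\le1$ for all $j>s_\alpha$. A monomial $(x_1\cdots x_n)^{p-1}$ in $f^{p-1}$ comes from a multiset of $p-1$ monomials of $f$ whose exponent vectors sum to $(p-1,\dots,p-1)$. If some chosen monomial is one of the $x^\alpha$, let $s$ be the largest index $s_\alpha$ among the chosen lower monomials; then the $s$-th coordinate of each chosen exponent vector is at most $1$, and it is $0$ for at least one of them, so the $s$-th coordinate of the sum is at most $p-2$, a contradiction. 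Therefore the only contribution is $(c\,x_1\cdots x_n)^{p-1}$, so the coefficient of $(x_1\cdots x_n)^{p-1}$ in $f^{p-1}$ is $c^{p-1}\neq0$, and a scalar multiple of $\phi_\sigma$ is a Frobenius splitting; that is, $\sigma$ is a Frobenius Poisson $\TT$-Pfaffian.

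\textbf{Main obstacle.} The delicate point, appearing in both places where it matters, is controlling the interaction of the $\delta$-part with the log-canonical part: isolating the coefficient of $x_1\cdots x_n$ in $f$ (to ensure the Pfaffian is non-zero) and ruling out that products of the lower monomials of $f$ could reconstitute $(x_1\cdots x_n)^{p-1}$. Both reductions rest on the strict triangularity encoded in $\delta_i(x_j)\in k[x_{i+1},\dots,x_{j-1}]$, which is exactly what Lemma~\ref{polydegree} distills; the remainder is bookkeeping with shuffles and with the combinatorics of exponent vectors.
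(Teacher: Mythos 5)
Your proposal is correct and follows essentially the same route as the paper: the reduction via the splitting criterion of \cite[\S 1.3]{BK}, the use of Lemma \ref{polydegree} and the proof of Proposition \ref{5} to show that only the log-canonical part contributes the monomial $x_1\cdots x_n$ (and that $(x_1\cdots x_n)^{p-1}$ in $f^{p-1}$ can only arise from that monomial), and Lemma \ref{fullrank} to reduce existence to a spanning statement in $V$. The only divergence is the spanning step, where you show $\{\partial_h : h\in\mathfrak t\}+\mathrm{im}((\pi_X)_0)=V$ by noting that a covector killing both satisfies a triangular system with diagonal entries $\pm\lambda_i(h_i)\neq 0$, whereas the paper proves full rank of the $n\times 2n$ block matrix by induction on $n$ with explicit column operations; your variant is a slightly more direct argument for the same linear-algebra fact and is correct.
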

\begin{proof}
Suppose $\pi^{[r]} \wedge \partial_{h_{i_1}} \wedge \ldots \wedge \partial_{h_{i_{n-2r}}} = f \partial_{1} \wedge \ldots \wedge \partial_{n}$ is non-zero. Then by \cite[Theorem 1.3.8]{BK}, a non-zero scalar multiple of it defines a Frobenius splitting if and only if the coefficient of $(x_1\ldots x_n)^{p-1}$ in $f^{p-1}$ is a non-zero constant. By Lemma \ref{polydegree}, we know that the coefficient of $(x_1\ldots x_n)^{p-1}$ in $f^{p-1}$ is equal to the $p-1$ power of the coefficient of $x_1\ldots x_n$ in $f$. Write $\pi_0^{[r]} \wedge \partial_{h_{i_1}} \wedge \ldots \wedge \partial_{h_{i_{n-2r}}} = g \partial_{1} \wedge \ldots \wedge \partial_{n}$. From Proposition \ref{5} we see that the coefficient of $x_{j_1} \ldots x_{j_{2r}}\partial_{j_1} \wedge \ldots \wedge \partial_{j_{2r}}$ in $\pi^{[r]}$ is equal to that in $\pi_0^{[r]}$ for all $\{ j_1, \ldots , j_{2r} \}$. Therefore the coefficient of $x_1 \ldots x_n$ in $f$ is equal to that in $g$. But $g$ is just a scalar multiple of $x_1\ldots x_n$. So to prove the statement, we only need to prove that $\pi_0^{[ r]} \wedge \partial_{h_{i_1}} \wedge \ldots \wedge \partial_{h_{i_{n-2r}}}$ is zero.
	
	We know that $\partial_{h_j}$ is represented by the size $n$ column vector with the $i$-th entry being $\lambda_i (h_j)$. Denote the $n \times n$ strictly upper triangular matrix with $i,j$-th entry being $\lambda_i (h_j)$, where $i<j$, by $\Omega$. Denote the $n \times n$ strictly lower triangular matrix with $i,j$-th entry being $\lambda_i (h_j)$, where $i>j$, by $L$. Then the matrix representing $(\partial_{h_1} \ \ldots \ \partial_{h_n})$ is written as $\Omega+L+D$, where $D$ is the non-singular diagonal matrix with entries $\lambda_i (h_i)$.
	Therefore, by Lemma \ref{fullrank}, to prove the statement, we only need to prove that the following $n \times 2n$ matrix
	\[
	M= \begin{pmatrix}
	\Omega-\Omega^T & \Omega+L+D  \\
	\end{pmatrix}
	\]
	is of full rank.
	When $n=1$, the conclusion is trivial. Suppose the conclusion holds for a general $n$. For $n+1$, denote the size $n$ column vector with $i$-th entry $\lambda_i (h_{n+1})$ by $v_1$, denote the size $n$ row vector with $i$-th entry $\lambda_{n+1} (h_i)$ by $v_2^T$. Then one has
	\[
	\Omega'= \begin{pmatrix}
	\Omega & v_1  \\
	0 & 0   \\
	\end{pmatrix}
	\  , \
	D'= \begin{pmatrix}
	D & 0  \\
	0 & \lambda_{n+1} (h_{n+1})   \\
	\end{pmatrix}
	\mbox{ and }
	L'= \begin{pmatrix}
	L & 0  \\
	v_2^T & 0   \\
	\end{pmatrix}.
	\]
	Therefore the $(n+1) \times (2n+2)$ matrix $M'$ is
	\[
	M'= \begin{pmatrix}
	\Omega-\Omega^T & v_1 & \Omega+L+D & v_1  \\
	-{v_1}^T & 0 & {v_2}^T & \lambda_{n+1} (h_{n+1})  \\
	\end{pmatrix}.
	\]
	Denote the length $2n$ row vector $( -{v_1}^T \ {v_2}^T )$ by $w$. If we move the $(n+1)$-th column of $M'$ to the last, $M'$ becomes
	\[
	\begin{pmatrix}
	M & v_1 & v_1 \\
	w & \lambda_{n+1} (h_{n+1}) & 0 \\
	\end{pmatrix}.
	\]
	Subtract the last column from the second last column gives,
	\[
	\begin{pmatrix}
	M & 0 & v_1 \\
	w & \lambda_{n+1} (h_{n+1}) & 0 \\
	\end{pmatrix}.
	\]
	By definition we know that $\lambda_{n+1} (h_{n+1}) \neq 0$. Hence the last row of the matrix above cannot be a linear combination of the first $n$ rows since the second last entry is zero for each of the first $n$ rows. Therefore $M'$ is also of full rank.
\end{proof}
\subsection{Proof of Theorem D}
To prepare for the proof of Theorem D, we first cite the following Proposition \ref{finitec} and prove the following Lemma \ref{cs}.

\begin{proposition}\label{finitec}\cite{KM}
	Let $X$ be a non-singular variety split by $\sigma \in H^0(X, \omega_X^{1-p})$. Then for all compatibly split subvarieties $Y$, one has $Y \subset Z(\sigma)$.
\end{proposition}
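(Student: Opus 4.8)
The plan is to read $Z(\sigma)$ as the scheme-theoretic zero locus of the global section $\sigma$ of the line bundle $\omega_X^{1-p}$, so that ``$Y\subseteq Z(\sigma)$'' means precisely that $\sigma$ vanishes identically on $Y$; since $\sigma$ induces a splitting (call it $\phi_\sigma$) it is nonzero, hence $Z(\sigma)$ is a proper closed subscheme and only proper $Y$ can occur. Now $\sigma|_Y$ is a section of a line bundle on the reduced scheme $Y$, so it is zero iff it vanishes at every closed point of a dense open subset of $Y$; and since $k$ is perfect, $Y$ is smooth over $k$ on a dense open. Thus it suffices to prove: for every closed point $x$ of $X$ at which $Y$ is smooth, the germ $\sigma_x$ lies in $\mathcal I_{Y,x}\,(\omega_X^{1-p})_x$.

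The next step is to localise and complete at such a point: put $R=\widehat{\O}_{X,x}\cong k[[x_1,\dots,x_n]]$, choosing the regular system of parameters so that $\mathcal I_{Y,x}=(x_1,\dots,x_c)$, where $c\geq 1$ is the codimension of $Y$ in $X$ (possible because $Y$ is smooth at $x$ inside the smooth $X$). Trivialise $\omega_X^{1-p}$ near $x$ by $(dx_1\wedge\cdots\wedge dx_n)^{\otimes(1-p)}$ and write $\sigma=g\,(dx_1\wedge\cdots\wedge dx_n)^{\otimes(1-p)}$ with $g\in R$; the goal then becomes $g\in(x_1,\dots,x_c)$, since $Z(\sigma)$ is cut out near $x$ by $g$. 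By the Cartier-operator description of the correspondence $\sigma\mapsto\phi_\sigma$ (see \cite[Proposition 1.3.7 and Theorem 1.3.8]{BK}), in these coordinates $\phi_\sigma$ is the map $r\mapsto\phi_0(gr)$, $r\in R$, where $\phi_0\colon F_*R\to R$ is the additive, Frobenius-semilinear operator with $\phi_0(x^e)=x^{(e-(p-1)\mathbf 1)/p}$ when $e\equiv(p-1)\mathbf 1\pmod p$ and $\phi_0(x^e)=0$ otherwise (here $\mathbf 1=(1,\dots,1)$); the hypothesis that $\phi_\sigma$ is a splitting means $\phi_0(g)=1$, possibly after rescaling $\sigma$ by a nonzero scalar (which does not change $Z(\sigma)$). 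Compatibility of $Y$ with $\phi_\sigma$ gives $\phi_\sigma(F_*\mathcal I_{Y,x})\subseteq\mathcal I_{Y,x}$; in particular $\phi_0(g\,x_1h)\in(x_1,\dots,x_c)$ for every $h\in R$.

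The crux is then a short monomial computation. Suppose for contradiction that $g\notin(x_1,\dots,x_c)$; then $g$ has a monomial $a_\nu x^\nu$ with $a_\nu\ne 0$ and $\nu_1=\cdots=\nu_c=0$. Take $h=x^\mu$ to be the monomial with $\mu_1=p-2$, with $\mu_i=p-1$ for $2\le i\le c$, and with $\mu_i$ the least nonnegative residue of $p-1-\nu_i$ modulo $p$ for $i>c$. Since $x_1h=x^{\mu+(1,0,\dots,0)}$ is a single monomial, multiplication by it permutes monomials and produces no cancellation, so the coefficient of any monomial $x^q$ in $\phi_0(g\,x_1h)$ equals the coefficient in $g$ of the monomial with exponent $pq+(p-1)\mathbf 1-(1,0,\dots,0)-\mu$. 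Choosing $q=(0,\dots,0,q_{c+1},\dots,q_n)$ with $q_i=(\nu_i+\mu_i-(p-1))/p$ (a nonnegative integer by the choice of $\mu$), a direct check gives $pq+(p-1)\mathbf 1-(1,0,\dots,0)-\mu=\nu$, so the coefficient of $x^q$ in $\phi_0(g\,x_1h)$ is $a_\nu\ne 0$; as the first $c$ coordinates of $q$ vanish, this forces $\phi_0(g\,x_1h)\notin(x_1,\dots,x_c)$. But $x_1h\in\mathcal I_{Y,x}$, contradicting compatibility. Hence $g\in(x_1,\dots,x_c)$, i.e. $\sigma$ vanishes along $Y$ near $x$; letting $x$ range over the closed points of the smooth locus of $Y$ gives $\sigma|_Y=0$, that is $Y\subseteq Z(\sigma)$. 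The passage $\O_{X,x}\to\widehat\O_{X,x}$ is harmless by faithful flatness, which preserves and reflects both compatible splitting and the membership $g\in\mathcal I_{Y,x}$. The one genuine obstacle is bookkeeping: correctly pinning down, via the Cartier isomorphism, the local function $g$ attached to $\sigma$ and its scalar normalisation, and then verifying that multiplying $g$ by the single monomial $x_1h$ cannot kill the isolated coefficient $a_\nu$ — both are routine once the set-up is in place.
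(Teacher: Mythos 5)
Your argument is correct, but note that the paper itself gives no proof of this proposition: it is simply quoted from Kumar--Mehta \cite{KM}, so there is nothing in the paper to compare against line by line. What you have written is a correct, self-contained proof along the standard lines: reduce to a closed point $x$ where $Y$ is smooth, pass to $\widehat{\mathcal{O}}_{X,x}\cong k[[x_1,\dots,x_n]]$ with $\mathcal{I}_{Y,x}=(x_1,\dots,x_c)$, invoke the local description $\phi_\sigma(r)=\phi_0(gr)$ from \cite[\S 1.3]{BK}, and run a monomial computation; I have checked that your choices of $\mu$ and $q$ do satisfy $pq+(p-1)\mathbf 1-(1,0,\dots,0)-\mu=\nu$ with $q_i\in\mathbb{Z}_{\geq 0}$, and that no cancellation can occur since each target monomial of $\phi_0$ has a unique preimage exponent. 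Three small points. First, $\phi_0$ is $p^{-1}$-semilinear over $k$, so the coefficient of $x^q$ in $\phi_0(g\,x_1h)$ is $a_\nu^{1/p}$ rather than $a_\nu$; harmless since $k$ is perfect. Second, the statement must implicitly exclude $Y=X$ (which is always compatibly split yet never contained in $Z(\sigma)$); your remark that ``only proper $Y$ can occur'' should be a standing hypothesis rather than something inferred from the conclusion. Third, you are proving more than needed: for the set-theoretic inclusion it suffices to take a smooth point $x\in Y$ with $\sigma(x)\neq 0$, where $g$ is a unit and only the case $\nu=0$ (i.e.\ $\phi_0(g\,x_1^{p-1}x_2^{p-1}\cdots x_n^{p-1})$ having nonzero constant term) is required; your extra generality costs nothing but could be trimmed. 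Finally, the completion step is fine, though ``faithful flatness'' alone does not yield that the extended map $\hat\phi$ still preserves $\mathcal{I}_{Y,x}\widehat{\mathcal{O}}_{X,x}$ --- one also needs the ($\mathfrak{m}$-adic) continuity of $\hat\phi$, which is standard but deserves a word.
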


\begin{lemma}
	\label{cs}
	Let $Y$ be a closed subvariety of a non-singular variety $X$ with ideal sheaf $\mathcal{I}_Y$. Let $V \in H^0(X,\mathcal{T}_X)$. Then $Y$ is invariant under $V$ if and only if $\mathcal{I}_{Y,p}$ is invariant under $V_p$ in $\mathcal{O}_{X,p}$ for every closed point $p$ of some open subset $\mathcal{U}$ of $X$.
\end{lemma}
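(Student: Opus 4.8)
The plan is to reformulate the assertion ``$Y$ is invariant under $V$'' as the vanishing of a single morphism of coherent sheaves on $Y$, and then to observe that such vanishing is detected at closed points. Throughout, ``$Y$ invariant under $V$'' means that the derivation $V\colon\mathcal{O}_X\to\mathcal{O}_X$ carries the ideal sheaf $\mathcal{I}_Y$ into itself.

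First I would work on an affine chart $\operatorname{Spec}A\subseteq X$ on which $Y$ is cut out by an ideal $I$ and $V$ restricts to a derivation of $A$; on such a chart invariance means $V(I)\subseteq I$. For $a,b\in I$ the Leibniz rule gives $V(ab)=aV(b)+bV(a)\in I$, so the composite $I\xrightarrow{\,V\,}A\to A/I$ always kills $I^{2}$ and descends to a map $\bar V\colon I/I^{2}\to A/I$; moreover $a\mapsto V(a)\bmod I$ is $\mathcal{O}_X$-linear, so these glue to a global morphism of coherent $\mathcal{O}_Y$-modules
\[
\bar V\in H^0\!\big(Y,\ \mathcal{H}om_{\mathcal{O}_Y}(\mathcal{I}_Y/\mathcal{I}_Y^{2},\,\mathcal{O}_Y)\big),
\]
and ``$Y$ is invariant under $V$'' holds precisely when $\bar V=0$. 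The identical Leibniz computation at the stalk of a closed point $p$ shows that ``$\mathcal{I}_{Y,p}$ is invariant under $V_p$ in $\mathcal{O}_{X,p}$'' is equivalent to the vanishing of the germ $\bar V_p$; in particular this is automatic when $p\notin Y$, since then $\mathcal{I}_{Y,p}=\mathcal{O}_{X,p}$.

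Given this, the forward implication is immediate: if $\bar V=0$ then every germ $\bar V_p$ vanishes, so $\mathcal{I}_{Y,p}$ is invariant under $V_p$ at every closed point of $X$, a fortiori at the closed points of any open $\mathcal{U}$ (for this direction one may take $\mathcal{U}=X$). For the converse I would argue as follows. Since $X$, hence $Y$, is of finite type over the field $k$, the scheme $Y$ is Jacobson, so the closed points of $\mathcal{U}\cap Y$ are dense in $\mathcal{U}\cap Y$, hence dense in $Y$ once $\mathcal{U}$ is dense in $X$ (as it is in the intended applications). Let $\mathcal{G}\subseteq\mathcal{O}_Y$ be the image subsheaf of $\bar V$; it is coherent, so $\operatorname{Supp}\mathcal{G}$ is closed in $Y$, and the hypothesis says it avoids the given dense set of closed points. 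A nonempty closed subset of a Jacobson scheme contains a closed point, so $\operatorname{Supp}\mathcal{G}$ is empty, i.e.\ $\bar V=0$. Equivalently, in affine terms: for $a\in I$, the element $V(a)\bmod I$ of the reduced ring $A/I$ vanishes at every closed point of the dense open $\mathcal{U}\cap\operatorname{Spec}(A/I)$, so its zero locus is a closed set containing a dense subset and therefore is everything; hence $V(a)\bmod I$ lies in the nilradical of $A/I$ and is $0$, so $V(I)\subseteq I$ on every chart.

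The only direction with content is this converse, and the step I expect to require care is the passage from ``$\bar V$ kills a dense set of closed points of $\mathcal{U}$'' to ``$\bar V=0$''. It relies on two facts: that closed points are very dense (finite type over a field, hence Jacobson), and that $\mathcal{H}om_{\mathcal{O}_Y}(\mathcal{I}_Y/\mathcal{I}_Y^{2},\mathcal{O}_Y)$ carries no nonzero section supported on a proper closed subset — which holds because a $\mathcal{H}om$ into the reduced sheaf $\mathcal{O}_Y$ is torsion-free, so that a section vanishing on a dense open vanishes identically. One should also note the mild standing assumption that $\mathcal{U}\cap Y$ be dense in $Y$: this is automatic when $\mathcal{U}$ is a dense open of $X$ and $Y$ is irreducible, which is the situation in which the lemma is invoked in the proof of Theorem D.
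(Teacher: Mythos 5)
Your proposal is correct and takes essentially the same route as the paper: the paper likewise reduces to an affine chart and, for $a\in I$, deduces $V(a)\in I$ from the vanishing of $V(a)\bmod I$ in the localizations at the closed points of a dense open, using density of closed points (Jacobson/Nullstellensatz) and reducedness of $A/I$ --- your affine restatement is exactly this argument, and the packaging via $\bar V\in\mathcal{H}om_{\mathcal{O}_Y}(\mathcal{I}_Y/\mathcal{I}_Y^2,\mathcal{O}_Y)$ is only cosmetic. Two small caveats: the sentence ``a nonempty closed subset of a Jacobson scheme contains a closed point, so ${\rm Supp}\,\mathcal{G}$ is empty'' is not valid by itself (that closed point could lie outside $\mathcal{U}$), and is rescued only by the torsion-freeness/no-section-supported-on-a-proper-closed-subset fact you state afterwards; and, like the paper (whose proof silently inserts ``since $\mathcal{U}$ is dense''), the lemma really requires $\mathcal{U}\cap Y$ dense in $Y$, which is \emph{not} automatic from $\mathcal{U}$ being dense in $X$ and $Y$ irreducible (e.g.\ $Y$ could lie in $X\setminus\mathcal{U}$), though it does hold in the application where $\mathcal{U}\cap D=D^{reg}$.
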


\begin{proof}
	It suffices to prove the statement for the affine case. Let $X={
\rm Spec}(A)$, where $A$ is a finitely generated $k$-algebra which is also an integral domain. Suppose $Y$ is given by an ideal $I$ of $A$. Suppose in addition that for every $a \in I$, $[V_p, a] \in I_p$ for all $p \in \mathcal{U}$. That means $[V_p,a]$ is zero in the quotient $A_p/I_p \cong (A/I)_p$. Since $\mathcal{U}$ is dense, $[V,a]$ actually vanishes in $A/I$, which means $[V,a] \in I$. The other direction is obvious.
\end{proof}

\begin{theorem}\label{E}
	Let $X$ be a non-singular variety over $k$, $V \in H^0 (X, \mathcal{T}_X)$ and $\sigma \in H^0 (X, \omega_X^{-1})$. Suppose $X$ is split by $\sigma^{p-1}$ and $[V,\sigma]=0$. Then all compatibly split subvarieties of $\sigma^{p-1}$ are invariant under $V$. Moreover, if the Frobenius splitting is defined by a Poisson $\TT$-Pfaffian, then all compatibly split subvarieties are $\TT$-Poisson subvarieties.
\end{theorem}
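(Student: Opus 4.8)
The plan is to first prove the statement for an arbitrary vector field $V$ with $[V,\sigma]=0$, and then obtain the $\TT$-Poisson conclusion by feeding into that result the two families of derivations for which Proposition \ref{hamiltonvanish} guarantees $[V,\sigma]=0$ --- namely the Hamiltonian derivations and the generating vector fields of $\ft$ --- together with a separate argument for $\TT$-stability.

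For the first assertion I would reduce to the case of prime divisors. The key point is that $[V,\sigma]=0$ forces $V$ to preserve the ideal sheaf of $\mathrm{div}(\sigma)$: on an affine open $\calU$ trivializing $\omega_X^{-1}=\wedge^n\calT_X$ (with $n=\dim X$), write $\sigma|_\calU=g\,e$ for a nowhere-vanishing section $e$ and $g\in\calO_X(\calU)$, so that $\mathrm{div}(\sigma)\cap\calU$ is cut out by $g$; expanding the Schouten bracket gives $0=[V,g\,e]=V(g)\,e+g\,[V,e]$, and since $[V,e]$ is again a section of the line bundle $\wedge^n\calT_X$ it is a local multiple of $e$, whence $V(g)\in(g)$. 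Because $\sigma^{p-1}$ is an honest Frobenius splitting, $\mathrm{div}(\sigma)$ is reduced (\cite[\S1.3]{BK}); write $\mathrm{div}(\sigma)=D_1+\cdots+D_m$ with the $D_i$ prime. At the generic point $\eta_i$ of $D_i$ the local ring $\calO_{X,\eta_i}$ is a DVR, and by reducedness the localization of $\mathcal{I}_{\mathrm{div}(\sigma)}$ at $\eta_i$ is its maximal ideal; so $V$ preserves $\mathcal{I}_{D_i}$ on an open set meeting $D_i$ densely, and Lemma \ref{cs} yields that $D_i$ is $V$-invariant. Each $D_i$ is itself compatibly split (\cite[\S1.3]{BK}). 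For a general compatibly split subvariety $Y$, Proposition \ref{finitec} gives $Y\subseteq\mathrm{div}(\sigma)$, and by the structure theory of compatibly split subvarieties (\cite[\S1.3]{BK}) $Y$ is an irreducible component of an intersection $D_{i_1}\cap\cdots\cap D_{i_c}$; thus $\mathcal{I}_Y$ coincides with $\mathcal{I}_{D_{i_1}}+\cdots+\mathcal{I}_{D_{i_c}}$ in a neighbourhood of the generic point of $Y$. Since $V$ preserves each $\mathcal{I}_{D_{i_j}}$, it preserves this sum, hence preserves $\mathcal{I}_Y$ on a dense open of $Y$, and Lemma \ref{cs} finishes the first assertion.

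For the ``moreover'', write $\sigma=\pi^{[r]}\wedge\partial_{h_1}\wedge\cdots\wedge\partial_{h_l}$. By Proposition \ref{hamiltonvanish}, $[\partial_h,\sigma]=0$ for every $h\in\ft$, and for every affine open $\calU$ and every $a\in\calO_X(\calU)$ the Hamiltonian derivation $X_a$ of $\calU$ satisfies $[X_a,\sigma|_\calU]=0$; applying the already-proven first assertion (over $X$ for $\partial_h$, and over $\calU$ for $X_a$) shows that every compatibly split $Y$ is invariant under every $\partial_h$ and every $X_a$. Invariance under all $X_a$ says precisely that $\mathcal{I}_Y$ is a Poisson ideal sheaf, i.e.\ that $Y$ is a Poisson subvariety. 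For $\TT$-stability, observe that $\pi$ and the generating vector fields $\partial_{h_i}$ are $\TT$-invariant (the torus acts by Poisson automorphisms and is abelian), hence $\sigma$ and therefore the splitting $\phi_\sigma$ are $\TT$-invariant; consequently $t(Y)$ is compatibly split for every $t\in\TT$. As there are only finitely many compatibly split subvarieties, the stabilizer $\{t\in\TT:t(Y)=Y\}$ is a closed subgroup of finite index in the connected torus $\TT$, hence all of $\TT$, so $Y$ is $\TT$-stable. Combining, $Y$ is a $\TT$-Poisson subvariety.

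I expect the main obstacle to be the passage, in the first assertion, from the single identity $[V,\sigma]=0$ to invariance of the ideal of each prime component of $\mathrm{div}(\sigma)$: this is exactly where one uses that $\sigma^{p-1}$ is a splitting rather than merely a near-splitting, since the reducedness of $\mathrm{div}(\sigma)$ is what makes the valuative argument at the $\eta_i$ work --- in characteristic $p$ it would fail were $\sigma$ to vanish along some $D_i$ to order divisible by $p$. Once the prime divisors are controlled, lifting to general compatibly split subvarieties via the local normal-crossings picture and then deducing the Poisson and torus statements are essentially formal.
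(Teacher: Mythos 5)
Your divisor step is sound (and is a pleasant coordinate-free variant of the paper's local-coordinate computation: the paper instead works at a smooth point of a compatibly split divisor $D$, writes $\sigma=z_1^mh\,\partial_{z_1}\wedge\cdots\wedge\partial_{z_n}$ with $p\nmid m$, and extracts $g_1\in(z_1)$ from $[V,\sigma]=0$), and your ``moreover'' part is fine --- indeed your explicit argument for $\TT$-stability via \cite{KM} and connectedness of $\TT$ is more careful than the paper, which passes directly from invariance under the generating vector fields to the $\TT$-Poisson conclusion. The genuine gap is the passage from divisors to an arbitrary compatibly split subvariety $Y$: you invoke a ``structure theory'' according to which $Y$ is an irreducible component of an intersection $D_{i_1}\cap\cdots\cap D_{i_c}$ of the compatibly split divisors, so that $\mathcal{I}_Y=\mathcal{I}_{D_{i_1}}+\cdots+\mathcal{I}_{D_{i_c}}$ near the generic point of $Y$. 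No such theorem holds, even for splittings of the form $\sigma^{p-1}$. For example, take $X=\mathbb{A}^3$ and $\sigma=z(xy-z^2)\,\partial_x\wedge\partial_y\wedge\partial_z$; the coefficient of $(xyz)^{p-1}$ in $\bigl(z(xy-z^2)\bigr)^{p-1}$ is $1$, so $\sigma^{p-1}$ is a Frobenius splitting. The compatibly split divisors are $D_1=\{z=0\}$ and $D_2=\{xy=z^2\}$; $D_1\cap D_2$ is the union of the lines $\{x=z=0\}$ and $\{y=z=0\}$, each of which is compatibly split, and hence their intersection, the origin, is compatibly split as well. But the origin is not an irreducible component of $D_1$, $D_2$ or $D_1\cap D_2$, and $\mathcal{I}_{D_1}+\mathcal{I}_{D_2}=(z,\,xy)$ is strictly smaller than the ideal of the origin even after localizing there. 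So the step ``thus $\mathcal{I}_Y$ coincides with $\sum_j\mathcal{I}_{D_{i_j}}$ generically'' fails, and with it your deduction that a general compatibly split $Y$ is $V$-invariant.

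What is missing is an inductive mechanism, and this is exactly how the paper proceeds: it argues by induction on $\dim X$. Having shown each compatibly split divisor $D_i$ is $V$-invariant, $V$ restricts to a vector field on $D_i^{reg}$; the splitting $\sigma^{p-1}$ restricts to a splitting of $D_i^{reg}$ (again the $(p-1)$-st power of an anticanonical section, the residue of $\sigma$, with vanishing bracket against the restricted $V$) which compatibly splits $Y_i=Y\cap D_i^{reg}$; the induction hypothesis then makes each $Y_i$ invariant, and invariance of $Y$ is recovered from that of the $Y_i$'s. To repair your proof you should replace the structural claim by an induction of this type (or some other argument handling points of $Y$ such as the origin above, where $Y$ is not cut out, even generically, by the compatibly split divisors through it).
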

\begin{proof}
	Let $D$ be a compatibly split divisor. We first prove that $D$ must be invariant under $V$. Since  $D^{reg}$, the smooth locus of $D$, is open in $D$, we only need to prove the statement locally on $D^{reg}$. Let $p$ be a closed point of $D^{reg}$. Choose local parameters $z_1, \ldots, z_n$ of $X$ such that $D$ is locally defined by $z_1=0$. Denote $\langle z_1 \rangle$ by $I$. Then
	\[
	V=\sum_{i=1}^n g_i \frac{\partial}{\partial z_i}, \mbox{ for some $g_i \in O_{X,p} \subset k[[z_1, \ldots, z_n]]$}
	\]
	and
	\[
	\sigma=f \frac{\partial}{\partial z_1} \wedge \ldots  \wedge \frac{\partial}{\partial z_n},\mbox{ for some $f \in O_{X,p} \subset k[[z_1, \ldots, z_n]]$}.
	\]
	By Proposition \ref{finitec}, this implies that $f=t_1^m h$ for some $h$ and $m \geq 1$. Here we choose $m$ to be maximal. Since $\sigma$ defines a Frobenius splitting, $m$ cannot be a multiple of $p$. On the other hand, by direct calculation, $[V,\sigma]=0$ implies
	\[
	0 =\sum_{i=1}^n \frac{\partial}{\partial z_i} (g_if) = \sum_{i=1}^n \frac{\partial}{\partial z_i} (g_iz_1^mh)
	=mg_1hz_1^{m-1} + z_1^m\sum_{i=1}^n \frac{\partial}{\partial z_i} (g_ih).
	\]
	Therefore $g_1h \in I$. Because $m$ is chosen to be maximal, it follows that $g_1 \in I$. Let $l \in O_{X,p}$, then
	\[
	[V,z_1l]=\sum_{i=1}^n g_i \frac{\partial}{\partial z_i}(z_1l)=g_1l + \sum_{i=1}^n z_1g_i \frac{\partial l}{\partial z_i} \in I.
	\]
	Hence $D$ is invariant under $V$.
	
	Next we prove the statement by induction on the dimension of $X$. The case where ${\rm dim}(X)=1$ follows from above. Suppose the statement holds for all $X$ with ${\rm dim}(X)<n$, we consider the case where ${\rm dim}(X)=n$. By our assumption, $X$ is split by a $p-1$ power. The compatibly split codimension $1$ subvarieties are exactly the irreducible components of $Z(\sigma)$, say, $D_1, \ldots, D_m$. Let $Y$ be a compatibly split subvariety, then $Y \subset \bigcup D_i$. Let $Y_i$ be the scheme-theoretic intersection of $Y$ and $D_i^{reg}$. Since $D_i$ and $Y$ are compatibly split, the splitting of $X$ restricts to a splitting of $D_i^{reg}$, compatibly splitting $Y_i$. Since ${\rm dim}(D_i^{reg})<n$, by induction hypothesis, we conclude that $Y_i$ is invariant under $V$.
	
	The ideal sheaf $\mathcal{I}_Y$ is the product of the $\mathcal{I}_{Y_i}$'s. Let $f_i$ be a section of $\mathcal{I}_{Y_i}$. By the product rule one has $[V,f_1 \ldots f_m]= \sum_{i=1}^m f_1 \ldots \widehat{f}_i \ldots f_m [V,f_i]$. Therefore $Y$ is invariant under $V$.
	
	To prove the second part of the theorem, suppose the splitting is defined by a Poisson $\TT$-Pfaffian $\sigma$ and let $Y$ be a compatibly-split subvariety. Then $[V,\sigma]=0$ by Proposition \ref{hamiltonvanish}, where $V$ is any Hamiltonian vector field or generating vector field of the $\TT$-action. Hence, by the first part of the theorem, $Y$ is invariant under all Hamiltonian vector fields or generating vector fields of the $\TT$-action. But this just means $Y$ is a $\TT$-Poisson subvariety.
\end{proof}

\sectionnew{A Frobenius Splitting of $G/U$ via Poisson $T$-Pfaffian}
\subsection{Frobenius Splitting of Bott-Samelson varieties via Poisson $T$-Pfaffians}\label{bottdef}
Let $G$ be a simply-connected semi-simple algebraic group over an algebraically closed field $k$ of arbitrary characteristic.
Let $\bfu = (s_{\alpha_1}, \ldots, s_{\alpha_n}) = (s_1, \ldots, s_n)$ be any word in $W = N_\sG(T) / T$, the Weyl group of $G$. Recall that the Bott-Samelson variety $Z_\bfu$ associated to the word $\bfu$ is defined to be the quotient space of $P_\bfu = P_{\alpha_1} \times \cdots \times P_{\alpha_n} \subset G^n$ by the $B^n$ action on $P_\bfu$ given by
\[
(p_1, \ldots , p_n) \cdot (b_1, \ldots, b_n)=(p_1b_1,\, b_1^{-1}p_2b_2, \,\ldots, \, b_{n-1}^{-1}p_nb_n),
\]
where $p_i \in P_{\alpha_i}$ and $b_i \in B$ for all $1 \leq i \leq n$. It is well known that $Z_{\bfu}$ is an
$n$-dimensional non-singular projective variety \cite[Section 2.2]{BK}.

Recall that a {\it sub-expression} of a word $\bfu=(s_1,s_2,\ldots,s_n)$ is an element $\gamma \in \{e, s_1 \} \times \{e, s_2 \} \times \ldots \times \{e, s_n \}$, where $e$ is the identity element in $W$.  The set of all sub-expressions of $\bfu$ is denoted by $\Upsilon_{\bfu}$. Each $\gamma=(\gamma_1, \ldots, \gamma_n) \in \Upsilon_{\bfu}$ determines an open embedding to $Z_{\bf u}$ via
\begin{equation}\label{eq-phi-gamma}
\Phi^{\gamma}: \;\; k^n \rightarrow Z_\textbf{u}, \;\; (z_1,\ldots,z_n) \mapsto [u_{-\gamma_1(\alpha_1)}(z_1)\dot{\gamma_1},\ldots,u_{-\gamma_n(\alpha_n)}(z_n)\dot{\gamma_n}].
\end{equation}
The image of $\Phi^{\gamma}$ will be denoted by $\calO^{\gamma}$. The atlas $\{ (\calO^{\gamma}, \Phi^{\gamma}): \gamma \in \Upsilon_\bfu \}$ will be called the standard atlas on $Z_{\bfu}$.

There is a natural $T$-action on $Z_{\bfu}$ given by
\begin{equation}\label{TBS}
t\cdot[p_1,p_2,...,p_n]=[tp_1,p_2,...,p_n],\ \ t\in T,\ p_i\in P_{\alpha_i},\ 1\leq i\leq n.
\end{equation}
Moreover, by direct computation we see that $\calO^{\gamma}$ is $T$-invariant for each $\gamma \in \Upsilon_{\bfu}$ with \[
t \cdot \Phi^{\gamma}(z_1,\ldots,z_n)=\Phi^{\gamma}(t^{-\gamma^1(\alpha_1)}z_1,\ldots,t^{-\gamma^n(\alpha_n)}z_n),
\]
where $\gamma^i :=\gamma_1\gamma_2\ldots \gamma_i \in W$ for $1 \leq i \leq n$.

For the case $k = \CC$, recall from \cite[Section 2.2]{Bott-Samelson} that the product Poisson structure $\pi_{\rm st}^n$ on $P_{\mathbf{u}}$ projects to a well-defined Poisson structure $\pi_{\bf u}$ on $Z_{\bf u}$, which is invariant under the $T$-action.  Therefore, $(Z_{\bfu},\pi_{\bfu})$ is a $T$-Poisson variety. The following theorem is proved in \cite[Theorem 4.14]{Bott-Samelson} \cite[Theorem 5.12]{Bott-Samelson} for the case $k = \CC$.

\begin{theorem}\label{A1}
	For any $\gamma = (\gamma_1, \ldots, \gamma_n) \in \Upsilon_u$,
	in the coordinates $(z_1,\ldots z_n)$ on
	the affine chart $\calO^{\gamma}$ of $Z_{\bf u}$, the Poisson structure $\pi_{\bf u}$  is given by
	\[
	\{z_i,\, z_k \} = \begin{cases}
	\langle \gamma^i(\alpha_i), \, \gamma^k(\alpha_k) \rangle z_iz_k,  &\mbox{if }\gamma_i=e \\
	-\langle \gamma^i(\alpha_i), \, \gamma^k(\alpha_k) \rangle z_iz_k -\langle \alpha_i, \alpha_i \rangle \delta_i(z_k) &\mbox{if }\gamma_i=s_i
	\end{cases},
	\quad 1\leq i < k \leq n,
	\]
	where $\delta_i$ is some vector field on $Z_{(s_{i+1}, \ldots, s_n)}$, and
	$(z_{i+1}, \ldots, z_n)$ are regarded as coordinates on $\calO^{(\gamma_{i+1}, \ldots, \gamma_n)}
	\subset Z_{(s_{i+1}, \ldots, s_n)}$ in the expression $\delta_i(z_k)$ for $k \geq i+1$. Moreover, $(\CC[ \calO^{\gamma}], \pi_{\bf u}|_{\calO^{\gamma}})$ is a Poisson CGL extension for each $\gamma \in \Upsilon_{\bfu}$ with the $h_i$-eigenvalue of $z_i$ given by $\lambda_i(h_i) = \langle \alpha_i, \alpha_i \rangle$ if $\gamma_i = e$ and $\lambda_i(h_i) = - \langle \alpha_i, \alpha_i \rangle$ if $\gamma_i = s_i$.
\end{theorem}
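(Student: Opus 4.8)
The plan is to prove the bracket formula by induction on the length $n$ of $\bfu = (s_1, \ldots, s_n)$, peeling off the first letter $s_1$. Write $\bfu' = (s_2, \ldots, s_n)$. The assignment $[p_1, p_2, \ldots, p_n] \mapsto p_1 B$ realizes $Z_\bfu$ as the associated bundle $P_{\alpha_1} \times_B Z_{\bfu'}$ over $P_{\alpha_1}/B \cong \Pset^1$ with fibre $Z_{\bfu'}$, where $B$ acts on $Z_{\bfu'}$ by $b \cdot [p_2, \ldots, p_n] = [b p_2, p_3, \ldots, p_n]$ (for $n = 1$ the fibre is a point and there is nothing to prove). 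The key observation is that the standard chart $\calO^\gamma$ is exactly the restriction of a local trivialization: the section $z_1 \mapsto u_{-\gamma_1(\alpha_1)}(z_1)\dot\gamma_1$ over the affine chart of $\Pset^1$, together with the chart $\calO^{(\gamma_2, \ldots, \gamma_n)} \subset Z_{\bfu'}$ carrying coordinates $(z_2, \ldots, z_n)$, recovers precisely the parametrization $\Phi^\gamma$; this also makes $-\gamma^i(\alpha_i)$ the $T$-weight of $z_i$, consistently with the displayed $T$-action on $\calO^\gamma$. Since the $B$-action on $(Z_{\bfu'}, \pi_{\bfu'})$ is a Poisson action --- it lifts to left multiplication on the first factor of $P_{\bfu'}$, which is Poisson by multiplicativity of $\pist$ --- the pushforward structure $\pi_\bfu$ restricted to such a trivialization splits as $\pi_{\bfu'}$ along the fibre, plus the bivector of $P_{\alpha_1}/B$ along the base (which vanishes, being one-dimensional), plus a connection term coupling $z_1$ with $z_2, \ldots, z_n$; this local description either follows from the general theory of Poisson actions or can be checked directly from the definition of $\pi_\bfu$ as the pushforward of $\pist^n$.

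Granting this, the brackets $\{z_k, z_l\}$ with $2 \le k < l \le n$ come from the fibre and are supplied by the inductive hypothesis applied to $\bfu'$ with sub-expression $(\gamma_2, \ldots, \gamma_n)$: by $W$-invariance of $\langle \, , \, \rangle$ one may apply $\gamma_1$ to both arguments to rewrite the resulting coefficient in terms of $\gamma^k = \gamma_1 \cdots \gamma_k$ and $\gamma^l$, and the $\delta_k$-term (when $\gamma_k = s_k$) is already a vector field on $Z_{(s_{k+1}, \ldots, s_n)}$; one must also observe that the fibre structure in the trivialization is literally $\pi_{\bfu'}$, so no hidden $z_1$-dependence appears, which is part of the associated-bundle picture. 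It then remains to compute the connection term $\{z_1, z_k\}$, $k \ge 2$. Lifting a point of $\calO^\gamma$ to $(u_{-\gamma_1(\alpha_1)}(z_1)\dot\gamma_1, \tilde x) \in P_{\alpha_1} \times P_{\bfu'}$, applying $\pist(gh) = l_g \pist(h) + r_h \pist(g)$, and using $\Lambda_{\rm st} = \sum_{\beta > 0} E_{-\beta} \wedge E_\beta$, one finds that after projection onto $P_{\alpha_1}$ and pairing against the other coordinates through the $B$-action the only surviving contributions are the $\beta = \alpha_1$ summand and the Cartan directions produced by $\ad e_{\pm\alpha_1}$. The Cartan piece is log-canonical, and using the $T$-weight $-\gamma^k(\alpha_k)$ of $z_k$ it gives $\varepsilon_1 \langle \gamma^1(\alpha_1), \gamma^k(\alpha_k) \rangle z_1 z_k$, where $\varepsilon_1 = 1$ if $\gamma_1 = e$ and $\varepsilon_1 = -1$ if $\gamma_1 = s_1$; the $e_{\pm\alpha_1}$-piece vanishes unless $\gamma_1 = s_1$, in which case it equals $-\langle \alpha_1, \alpha_1 \rangle \, \delta_1(z_k)$ with $\delta_1$ the vector field on $Z_{\bfu'}$ generated by the infinitesimal $P_{\alpha_1}$-action of $e_{\alpha_1}$, which lives on $Z_{(s_2, \ldots, s_n)}$ as claimed.

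Finally, the Poisson CGL structure on $(\CC[\calO^\gamma], \pi_\bfu|_{\calO^\gamma})$ is read off from the formula just proved: it has the shape $\{z_i, z_k\} = \lambda_k(h_i) z_i z_k + \delta_i(z_k)$ for $i < k$, where $\lambda_k$ is the $T$-character of weight $-\gamma^k(\alpha_k)$ and $h_i \in \fh$ is the element (unique by non-degeneracy of $\langle \, , \, \rangle$) with $\lambda_k(h_i) = \varepsilon_i \langle \gamma^i(\alpha_i), \gamma^k(\alpha_k) \rangle$ for all $k$, with $\varepsilon_i = 1$ if $\gamma_i = e$ and $\varepsilon_i = -1$ if $\gamma_i = s_i$; in particular $\lambda_i(h_i) = \varepsilon_i \langle \alpha_i, \alpha_i \rangle \ne 0$, which is condition (b) in the definition of a Poisson CGL extension and the asserted eigenvalue. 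For condition (a) one needs each $\delta_i$ to be a locally nilpotent derivation of $k[z_{i+1}, \ldots, z_n]$ with $\delta_i(z_k) \in k[z_{i+1}, \ldots, z_{k-1}]$: local nilpotence holds because $\delta_i$ is generated by the $\ad$-nilpotent element $e_{\alpha_i}$, and the ``strictly between'' property follows from the triangular form of the one-parameter unipotent action of $u_{\alpha_i}(\cdot)$ on the standard atlas of $Z_{(s_{i+1}, \ldots, s_n)}$. I expect the computation of the connection term $\{z_1, z_k\}$ to be the main obstacle: one must make explicit how $u_{-\gamma_1(\alpha_1)}(z_1)\dot\gamma_1 \in P_{\alpha_1}$ acts on the chart $\calO^{(\gamma_2, \ldots, \gamma_n)}$ of $Z_{\bfu'}$, identify the resulting vector field with $\delta_1$, and keep the sign and normalization conventions --- in $\Lambda_{\rm st}$, in the representative $\dot s_\alpha = u_\alpha(-1)u_{-\alpha}(1)u_\alpha(-1)$, and in the root vectors $E_\alpha = \sqrt{\langle \alpha, \alpha \rangle}\, e_\alpha$, $E_{-\alpha} = \tfrac{1}{2}\sqrt{\langle \alpha, \alpha \rangle}\, e_{-\alpha}$ --- mutually consistent, since it is exactly this bookkeeping that has to reproduce the coefficient $-\langle \alpha_1, \alpha_1 \rangle$ and the reflected root $\gamma^k(\alpha_k)$.
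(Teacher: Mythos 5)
First, a point of comparison: the paper does not prove this theorem at all --- it is imported verbatim from Elek--Lu \cite[Theorems 4.14 and 5.12]{Bott-Samelson}, where it is established via the machinery of mixed product Poisson structures and iterated Poisson--Ore extensions, adjoining one $\Pset^1$-factor at a time. Your plan (induction peeling off the first letter, viewing $Z_\bfu$ as the associated bundle $P_{\alpha_1}\times_B Z_{\bfu'}$) is in the same spirit and is a legitimate skeleton for a proof, but as written it is a plan rather than a proof, and the two places where all the content of the theorem lives are both deferred.

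The first gap is the assertion that in the trivialization given by the section $z_1\mapsto u_{-\gamma_1(\alpha_1)}(z_1)\dot\gamma_1$ the fibre--fibre part of $\pi_\bfu$ is ``literally $\pi_{\bfu'}$, so no hidden $z_1$-dependence appears.'' This is not part of any general associated-bundle picture: the quotient map $q:P_{\alpha_1}\times Z_{\bfu'}\to Z_\bfu$ pushes forward $(\pist(g_1),0)+(0,\pi_{\bfu'}(x))$, and since $\pist(g_1)\neq 0$ at points of the section and the kernel of $q_*$ is the twisted diagonal $B$-direction, $q_*(\pist(g_1),0)$ has in general a nonzero component in $\wedge^2$ of the fibre directions. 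That this component vanishes (equivalently, that the coefficient of $\{z_k,z_l\}$, $k,l\geq 2$, is exactly the one inherited from $Z_{\bfu'}$ after applying $W$-invariance of $\lara_\fg$) is precisely what must be computed; your appeal to ``the general theory of Poisson actions'' does not supply it, and note also that the $B$-action on $Z_{\bfu'}$ is a Poisson--Lie group action, not an action by Poisson automorphisms, so no naive splitting of the quotient structure is available. The second gap you flag yourself: the evaluation of the mixed term $\{z_1,z_k\}$, which must produce the coefficients $\pm\langle\gamma^1(\alpha_1),\gamma^k(\alpha_k)\rangle$ and $-\langle\alpha_1,\alpha_1\rangle\,\delta_1(z_k)$ from $\Lambda_{\rm st}$ and the chosen representatives $\dot s_\alpha$, is exactly where the theorem's normalizations are determined and is not carried out. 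The final paragraph (reading off the CGL axioms, the choice of $h_i$ dual to $\mp\gamma^i(\alpha_i)$, local nilpotence of $\delta_i$ from $\ad$-nilpotence of $e_{\alpha_i}$, and the triangularity $\delta_i(z_k)\in k[z_{i+1},\ldots,z_{k-1}]$) is fine \emph{conditional} on the bracket formula, so the proposal stands or falls on the two computations above.
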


\begin{remark}\label{sdpoissonpcc}
	{\rm	As mentioned in \cite[Section 1.2]{Bott-Samelson}, the formula in Theorem \ref{A1} is algebraic and of integral coefficients.}
\end{remark}

From now on let $G$ be a simply-connected semi-simple algebraic group over an algebraically closed field $k$ of characteristic $p > 3$, equipped with the standard Poisson structure defined in Definition \ref{sdpoissonpc}. The Weyl group of $G$ is naturally isomorphic to the Weyl group of its complex counterpart. We want to point out that the proof of Proposition \ref{qp}, which is carried out on the Lie algebra level, does not depend on the characteristic of the ground field and can be naturally adapted to the non-singular algebraic case. Therefore the product Poisson structure projects to a well-defined Poisson structure on Bott-Samelson varieties and $G/U$ in this positive characteristic case as well. By the same reasoning, Bott-Samelson varieties and $G/U$ over $k$ are $T$-Poisson varieties. See also \cite[Remark 5.21]{Bott-Samelson} for reference.

Moreover, the computation of the local expression of the Poisson bracket in Theorem \ref{A1}, which is again carried out at the Lie algebra level in the Chevalley basis, can naturally be adapted to the positive characteristic case via reduction modulo $p$. Therefore when $G$ is a simply-connected semi-simple algebraic group over an algebraically closed field of characteristic $p > 3$, the Poisson bracket on $Z_\bfu$ in the charts defined by (\ref{eq-phi-gamma}), where $\bfu$ is a word of the Weyl group of $G$, shares the same formula as the Poisson bracket on $Z_\bfu$, where $\bfu$ is regarded as a word of the Weyl group of the complex counterpart of $G$. Since $p > 3$, $\langle \alpha_i, \alpha_i \rangle$ is non-zero modulo $p$. Hence in characteristic $p$, the Poisson structure on the chart $\calO^{\gamma}$ is also a Poisson CGL extension for each $\gamma \in \Upsilon_{\bfu}$.

\begin{theorem}\label{bottsplit}
	A Frobenius Poisson $T$-Pfaffian exists on $(Z_{\bfu},\pi_{\bfu})$ and is unique up to scalar multiple. Moreover, it vanishes on all the sub-Bott-Samelson variety $Z_{\bfu[i]}$.
\end{theorem}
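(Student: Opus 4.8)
The plan is to carry out the entire argument inside the distinguished affine chart $\calO^{(e,\ldots,e)}$ of the standard atlas, the one attached to the trivial sub-expression $\gamma=(e,\ldots,e)\in\Upsilon_{\bfu}$. Since $\Phi^{(e,\ldots,e)}$ is an open embedding and $Z_{\bfu}$ is irreducible, $\calO^{(e,\ldots,e)}$ is dense open in $Z_{\bfu}$; and by Theorem \ref{A1}, in the coordinates $(z_1,\ldots,z_n)$ on this chart the Poisson structure is purely \emph{log-canonical},
\[
\pi_{\bfu}|_{\calO^{(e,\ldots,e)}}=\sum_{i<k}\langle\alpha_i,\alpha_k\rangle\,z_iz_k\,\partial_i\wedge\partial_k,
\]
with no $\delta$-part, precisely because every entry of $\gamma$ is $e$. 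In particular this Poisson CGL extension equals its own log-canonical part, so the rank hypothesis of Theorem \ref{A} is satisfied vacuously on this chart; note also that $p>3$ is what makes the chart a CGL extension in the first place, via $\langle\alpha_i,\alpha_i\rangle\neq 0$ in $k$.

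\textbf{Existence.} Put $2r=\rank(\pi_{\bfu})$, which by density of the chart equals the rank of the displayed log-canonical bivector, and set $l=n-2r$. Applying Theorem \ref{A} on $\calO^{(e,\ldots,e)}$ produces $h_1,\ldots,h_l\in\mathfrak{t}$ for which $\pi_{\bfu}^{[r]}\wedge\partial_{h_1}\wedge\cdots\wedge\partial_{h_l}$ is a nonzero anti-canonical section on the chart whose near-splitting is a Frobenius splitting there. (Concretely, one chooses the $h_j$ so that their coordinate vectors, together with the columns of the matrix representing the log-canonical bivector in the basis $z_1\partial_1,\ldots,z_n\partial_n$, span $k^n$; this is possible by the full-rank matrix computation performed in the proof of Theorem \ref{A}.) The same formula $\sigma:=\pi_{\bfu}^{[r]}\wedge\partial_{h_1}\wedge\cdots\wedge\partial_{h_l}$ defines a global section of $\omega_{Z_{\bfu}}^{-1}$ on all of $Z_{\bfu}$, since $\pi_{\bfu}$ is global and each $\partial_{h_j}$ is the generating vector field of the $T$-action on $Z_{\bfu}$; it is nonzero, being nonzero on the dense open $\calO^{(e,\ldots,e)}$; and its near-splitting is a Frobenius splitting on $Z_{\bfu}$ by the local-to-global principle for Frobenius splittings (see the Remark in \S\ref{subsec-nota-intro} and \cite[Section 1.1]{BK}). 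Hence $\sigma$ is a Frobenius Poisson $T$-Pfaffian of $(Z_{\bfu},\pi_{\bfu})$.

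\textbf{Uniqueness.} The key observation is that on the log-canonical chart every Poisson $T$-Pfaffian restricts to a scalar multiple of $z_1\cdots z_n\,\partial_1\wedge\cdots\wedge\partial_n$. Indeed, $\pi_{\bfu}^{[r]}|_{\calO^{(e,\ldots,e)}}$ is a sum over $2r$-element subsets $S\subset\{1,\ldots,n\}$ of scalar multiples of $z_S\,\partial_S$; expanding $\pi_{\bfu}^{[r]}\wedge\partial_{h_1}\wedge\cdots\wedge\partial_{h_l}$ over these subsets and over the monomials $z_i\partial_i$ appearing in each $\partial_{h_j}$, a term $\partial_S\wedge\partial_{i_1}\wedge\cdots\wedge\partial_{i_l}$ survives only when $S\sqcup\{i_1,\ldots,i_l\}=\{1,\ldots,n\}$, which forces the accompanying monomial to be exactly $z_1\cdots z_n$. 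So the coefficient of $\partial_1\wedge\cdots\wedge\partial_n$ is $c\,z_1\cdots z_n$ for a scalar $c$, nonzero iff $c\neq 0$. (As a byproduct this re-proves the splitting property on the chart: $(c\,z_1\cdots z_n)^{p-1}$ has $(z_1\cdots z_n)^{p-1}$-coefficient $c^{p-1}\neq 0$, so \cite[Theorem 1.3.8]{BK} applies.) Consequently any two Poisson $T$-Pfaffians agree up to a nonzero scalar on $\calO^{(e,\ldots,e)}$, hence, being sections of the line bundle $\omega_{Z_{\bfu}}^{-1}$ on the irreducible variety $Z_{\bfu}$, everywhere.

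\textbf{Vanishing on the sub-Bott-Samelson varieties.} For $1\le i\le n$, the sub-Bott-Samelson variety $Z_{\bfu[i]}$ obtained by omitting the $i$-th reflection, embedded in $Z_{\bfu}$ as the $i$-th boundary divisor, meets the chart $\calO^{(e,\ldots,e)}$ exactly in the coordinate hyperplane $\{z_i=0\}$, and this intersection is dense in $Z_{\bfu[i]}$. Since $\sigma|_{\calO^{(e,\ldots,e)}}=c\,z_1\cdots z_n\,\partial_1\wedge\cdots\wedge\partial_n$ with $c\neq 0$ vanishes identically on $\{z_i=0\}$, and $Z_{\bfu[i]}$ is irreducible, $\sigma$ vanishes on all of $Z_{\bfu[i]}$. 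The part that needs the most care is the two geometric inputs about the chart $\calO^{(e,\ldots,e)}$ — that $\pi_{\bfu}$ is purely log-canonical there, so that Theorem \ref{A} applies with no rank hypothesis, and that $Z_{\bfu[i]}$ cuts out $\{z_i=0\}$ in it — both of which are read off from the descriptions in \cite{Bott-Samelson} and \cite[Section 2.2]{BK} but should be recorded explicitly; by contrast, running Theorem \ref{A} in an arbitrary chart $\calO^{\gamma}$ would instead demand checking $\rank(\pi_{\bfu}|_{\calO^{\gamma}})=\rank((\pi_{\bfu}|_{\calO^{\gamma}})_0)$ directly, which is the genuinely harder route.
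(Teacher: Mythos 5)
Your argument is correct and is essentially the paper's own proof, spelled out in more detail: the paper likewise works in the chart $\calO^{\bf e}$, where Theorem \ref{A1} makes $\pi_{\bfu}$ a log-canonical Poisson CGL extension so that Theorem \ref{A} applies, reads off the local expression $z_1\cdots z_n\,\partial_1\wedge\cdots\wedge\partial_n$ for existence and uniqueness, and deduces the vanishing claim from $z_i$ being the local equation of $Z_{\bfu[i]}$ in that chart. Your added details (globalization of the splitting from the dense chart, the expansion showing any Poisson $T$-Pfaffian is a multiple of $z_1\cdots z_n\,\partial_1\wedge\cdots\wedge\partial_n$ there, and the density of $Z_{\bfu[i]}\cap\calO^{\bf e}$) are exactly the steps the paper leaves implicit.
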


\begin{proof}
	The existence and uniqueness follows from Proposition \ref{A} and Theorem \ref{A1}. In the standard atlas, the Poisson $T$-Pfaffian on the open subset $\calO^{\bf e}$ of $Z_{\bfu}$ is expressed as a scalar multiple of
	\[
	z_1z_2 \ldots z_n \frac{\partial}{\partial z_1} \wedge \ldots \wedge \frac{\partial}{\partial z_n}.
	\]
	The last claim follows from the fact that $z_i$ is the local defining equation for the sub-Bott-Samelson variety $Z_{\bfu[i]}$ in $\calO^{\bf e}$.
\end{proof}
\begin{remark}
{\rm As the Frobenius Poisson $T$-Pfaffian in Theorem \ref{bottsplit} vanishes on all the sub-Bott-Samelson variety $Z_{\bfu[i]}$, it provides a choice of anti-canonical section in \cite[Theorem 2.2.3]{BK}.
}
\end{remark}

\subsection{Proof of Theorem E}
To prepare for the proof of Theorem E, we first prove the following Theorem \ref{GBsplit}.

\begin{theorem}\label{GBsplit}
	A Frobenius Poisson $T$-Pfaffian exists on $(G/B,\pi_{\sG/\sB})$ and is unique up to scalar multiple.
\end{theorem}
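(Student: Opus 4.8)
\textbf{Proof proposal for Theorem~\ref{GBsplit}.}
The plan is to reduce everything to the big cell of $G/B$, apply Theorem~\ref{A} there via the Bott--Samelson picture of \S\ref{bottdef}, and then globalize, in close parallel to Theorem~\ref{bottsplit}. Let $N=\dim G/B=\ell(w_0)$ and let $\calO=B\dot{w}_0B/B$ be the big cell; it is an open dense $T$-stable subvariety of $G/B$ isomorphic to $\mathbb{A}^N$. Fixing a reduced word $\bfu=(s_{\alpha_1},\ldots,s_{\alpha_N})$ for $w_0$, the Bott--Samelson morphism $\mu\colon Z_\bfu\to G/B$ is an isomorphism over the open Bruhat cell, and one checks that $\mu^{-1}(\calO)$ is exactly the chart $\calO^{\bfu}\cong\mathbb{A}^N$ (the one with $\gamma=\bfu$, i.e. $\gamma_i=s_{\alpha_i}$ for all $i$) in the standard atlas, on which $\mu$ restricts to a $T$-equivariant Poisson isomorphism onto $\calO$. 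By Theorem~\ref{A1} (and Remark~\ref{sdpoissonpcc} in the present positive-characteristic setting), $(k[\calO],\pi_{\sG/\sB}|_\calO)$ is a Poisson CGL extension, and the rank hypothesis of Theorem~\ref{A} holds for it, exactly as in the proof of Theorem~\ref{bottsplit}. Hence Theorem~\ref{A} produces a Frobenius Poisson $T$-Pfaffian $\sigma_\calO$ on $\calO$, of the shape $\sigma_\calO=\pi_{\sG/\sB}^{[r]}|_\calO\wedge\partial_{h_1}\wedge\cdots\wedge\partial_{h_l}$ with $2r=\rank\pi_{\sG/\sB}$, $l=N-2r$, and $h_1,\ldots,h_l\in\ft$.

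I would then globalize. Since $\pi_{\sG/\sB}$ is a regular bivector field on all of $G/B$ and each $\partial_{h_i}$ is the (globally regular) generating vector field of the $T$-action, the multivector
\[
\sigma:=\pi_{\sG/\sB}^{[r]}\wedge\partial_{h_1}\wedge\cdots\wedge\partial_{h_l}
\]
is a regular global section of $\wedge^N\calT_{G/B}=\omega_{G/B}^{-1}$ (its degree is $2r+l=N$) whose restriction to $\calO$ equals $\sigma_\calO$; in particular $\sigma\neq0$ because $\calO$ is dense, so $\sigma$ is a Poisson $T$-Pfaffian of $(G/B,\pi_{\sG/\sB})$. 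The near-splitting $\phi_\sigma$ of $G/B$ defined by $\sigma^{p-1}$ restricts over $\calO$ to the near-splitting defined by $\sigma_\calO^{p-1}$, which is a Frobenius splitting by the choice of $\sigma_\calO$; since a near-splitting that restricts to a splitting on a dense open set is a splitting (the Remark following the definition of Frobenius Poisson $\mathbb{T}$-Pfaffian), $\phi_\sigma$ is a Frobenius splitting of $G/B$. Thus $\sigma$ is a Frobenius Poisson $T$-Pfaffian.

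For uniqueness up to scalar, let $\sigma'=\pi_{\sG/\sB}^{[r]}\wedge\partial_{h_1'}\wedge\cdots\wedge\partial_{h_l'}$ be any Poisson $T$-Pfaffian of $(G/B,\pi_{\sG/\sB})$. Because $\mu$ is a birational Poisson morphism, $\pi_\bfu$ has the same rank $2r$ as $\pi_{\sG/\sB}$, and the regular global section $\pi_\bfu^{[r]}\wedge\partial_{h_1'}\wedge\cdots\wedge\partial_{h_l'}$ of $\omega_{Z_\bfu}^{-1}$ agrees over $\calO^{\bfu}=\mu^{-1}(\calO)$ with the pullback of $\sigma'$ along the Poisson $T$-isomorphism $\mu|_{\calO^{\bfu}}$, hence is nonzero and is a Poisson $T$-Pfaffian of $(Z_\bfu,\pi_\bfu)$. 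On the chart $\calO^{\mathbf{e}}$ of $Z_\bfu$ the Poisson structure is purely log-canonical by Theorem~\ref{A1}, so every Poisson $T$-Pfaffian of $Z_\bfu$ restricts there to a scalar multiple of $z_1\cdots z_N\,\partial_1\wedge\cdots\wedge\partial_N$; as $\calO^{\mathbf{e}}$ is dense, all Poisson $T$-Pfaffians of $Z_\bfu$ are mutually proportional (this is the uniqueness asserted in Theorem~\ref{bottsplit}). Restricting back to $\calO^{\bfu}\cong\calO$ and using that $\calO$ is dense in $G/B$ gives $\sigma'\in k^\times\sigma$.

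The step I expect to require the most care is the first one: pinning down the identification of the big cell of $G/B$ with the Bott--Samelson chart $\calO^{\bfu}$ as \emph{Poisson} $T$-\emph{varieties}, so that Theorem~\ref{A1} genuinely applies on $\calO$ and the rank hypothesis of Theorem~\ref{A} is verified there. Once that identification and the rank equality are in hand, the globalization and the two ``restrict to a dense open'' arguments (for the splitting property and for uniqueness) are routine.
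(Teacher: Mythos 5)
Your overall route --- transferring the Bott--Samelson result to $G/B$ through the birational Poisson $T$-equivariant morphism $\mu$ and then globalizing and arguing uniqueness on a dense chart --- is the same as the paper's, but there is a genuine gap at the existence step, and it is not the step you flagged. You identify the big cell $\calO=B\dot w_0B/B$ with the chart $\calO^{\bfu}$ (all $\gamma_i=s_i$) and then assert that ``the rank hypothesis of Theorem~\ref{A} holds for it, exactly as in the proof of Theorem~\ref{bottsplit}.'' But the proof of Theorem~\ref{bottsplit} verifies that hypothesis only on the chart $\calO^{\mathbf{e}}$, where every $\gamma_i=e$, so by Theorem~\ref{A1} the bracket has no $\delta$-part and $\pi=\pi_0$ trivially. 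On your chart $\calO^{\bfu}$ the formula of Theorem~\ref{A1} does produce nontrivial $\delta$-terms, and its log-canonical part is a \emph{different} skew matrix (built from $\langle \gamma^i(\alpha_i),\gamma^k(\alpha_k)\rangle$ rather than $\langle\alpha_i,\alpha_k\rangle$); Proposition~\ref{5} only gives $\rank(\pi)\geq\rank(\pi_0)$, so the equality you need on $\calO^{\bfu}$ is a genuine claim that your argument never establishes, and it cannot be imported from the $\calO^{\mathbf{e}}$ computation because the log-canonical part is chart-dependent. Without it, Theorem~\ref{A} does not apply on $\calO$ and your $\sigma_\calO$ is not known to define a splitting.

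The gap is easily repaired, and the repair is essentially what the paper does. Either (i) do not re-run Theorem~\ref{A} on the big-cell chart at all: Theorem~\ref{bottsplit} already gives a global Frobenius Poisson $T$-Pfaffian on $Z_{\mathbf{w}_0}$, and restricting the resulting splitting to the open set $\calO^{\bfu}=\mu^{-1}(\calO)$ and transporting it through the $T$-equivariant Poisson isomorphism $\mu|_{\calO^{\bfu}}$ gives a splitting of $\calO$ induced by $(\pi_{\sG/\sB}^{[r]}\wedge\partial_{h_1}\wedge\cdots\wedge\partial_{h_l})^{p-1}$, after which your globalization and uniqueness paragraphs go through unchanged; or (ii) follow the paper and work with the chart $\calO^{\mathbf{e}}$, whose image under $\mu_{\mathbf{w}_0}$ meets $G/B$ in a dense open set containing $Bw_0B/B\cap B_-B/B$ and on which the Poisson structure is purely log-canonical, so the hypothesis of Theorem~\ref{A} is automatic there. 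Your uniqueness argument, which correctly passes back to $\calO^{\mathbf{e}}$ where every Poisson $T$-Pfaffian restricts to a multiple of $z_1\cdots z_n\,\partial_1\wedge\cdots\wedge\partial_n$, is sound and matches the paper's reasoning.
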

\begin{proof}
Recall from \S\ref{T-mixed} that $(G/B,\pi_{\sG/\sB})$ is a $T$-Poisson variety. Let $w_0$ be the longest element of $W$. Given a reduced word $w_0=s_1s_2...s_n$, we get a sequence of simple reflections $\textbf{w}_0=(s_1,s_2,...,s_n)$.
Let $\mu_{\textbf{w}_0}$ be the morphism $Z_{\textbf{w}_0}\rightarrow G/B$ defined by
\begin{equation}\label{resolution}
\mu_{\textbf{w}_0}:Z_{\textbf{w}_0}\rightarrow G/B,\ \ \ [p_1,p_2,...,p_n]\mapsto p_1p_2...p_n.B.
\end{equation}
It is obvious that $\mu_{\textbf{w}_0}$ is $T$-invariant under (\ref{TBS}). Denote by $p_{\textbf{w}_0}$ the projection $P_{\alpha_1} \times \ldots \times P_{\alpha_n} \rightarrow Z_{\textbf{w}_0}$. We know that $p_{\textbf{w}_0}$ is surjective and Poisson.
Since $\mu_{\textbf{w}_0} \circ p_{\textbf{w}_0}$ equals the composition of the multiplication map $P_{\alpha_1} \times \ldots \times P_{\alpha_n} \rightarrow G$ with the projection map $G \rightarrow G/B$, the map
\[\mu_{\textbf{w}_0} \circ p_{\textbf{w}_0}: (P_{\alpha_1} \times \ldots \times P_{\alpha_n},\pi_{\rm st}^n)\rightarrow (G/B,\pi_{\sG/\sB})\]
  is Poisson.  Therefore the morphism
\[\mu_{\textbf{w}_0}: (Z_{\textbf{w}_0},\pi_{\textbf{w}_0})\rightarrow (G/B,\pi_{\sG/\sB})\]
is Poisson. In addition, $\calO^{\bf e} \subset Z_{\textbf{w}_0}$ is $T$-invariant under (\ref{TBS}) and $\mu_{\textbf{w}_0}$ restricts to a $T$-equivariant isomorphism from $\calO^{\bf e}$ to $Bw_0B/B\cap B_-B/B \subset G/B$. It follows from Theorem \ref{bottsplit} that a Frobenius Poisson $T$-Pfaffian exists on $G/B$ and is unique up to scalar multiple.
\end{proof}

For the convenience of the readers, we restate Theorem E.

\begin{theorem}
There exists a unique (up to scalar multiples) Poisson $T$-Pfaffian $\sigma$ on $(G/U,\pi_{\sG/\sU})$ and $\sigma^{p-1}$ induces a Frobenius splitting on $G/U$.
 \end{theorem}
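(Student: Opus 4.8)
The plan is to reduce to the big Bruhat cell and bootstrap from the $G/B$ case (Theorem~\ref{GBsplit}) using Theorem~\ref{mixedGU}. Write $2r=\rank\pi_{\sG/\sU}$ and $l=\dim(G/U)-2r$; since these are intrinsic, the expression $\sigma=\pi_{\sG/\sU}^{[r]}\wedge\partial_{h_{1}}\wedge\cdots\wedge\partial_{h_{l}}$ is automatically a regular section of $\omega_{G/U}^{-1}$ for any $h_{1},\dots,h_{l}\in\ft$, and what must be proved is (i) for suitable $h_{i}$ it is non-zero, (ii) a scalar multiple of $\sigma^{p-1}$ is a Frobenius splitting, (iii) uniqueness up to scalar. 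As $G/U$ is irreducible and a near-splitting is a splitting once its restriction to a dense open subset is, all three may be checked on the dense open cell $Bw_{0}B/U$, $w_{0}$ the longest element of $W$.

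On that cell Theorem~\ref{mixedGU} gives a $T$-equivariant Poisson isomorphism with $(T\times(Bw_{0}B/B),\,0\bowtie_{A_{0}}\pi_{\sG/\sB})$, and, as in the proof of Theorem~\ref{GBsplit}, $\mu_{\mathbf{w}_{0}}$ identifies a dense open of $Bw_{0}B/B$ with the standard chart $\calO^{\mathbf{e}}\subset Z_{\mathbf{w}_{0}}$, on which $\pi_{\sG/\sB}$ is a Poisson CGL extension in coordinates $z_{1},\dots,z_{N}$, $N=\ell(w_{0})$, by Theorem~\ref{A1}. Thus a dense open subset of $G/U$ is identified with $(k^{*})^{r_{0}}\times k^{N}$ ($r_{0}=\dim T$) carrying $0\bowtie_{A_{0}}\pi_{\sG/\sB}$; in coordinates $(x_{1},\dots,x_{r_{0}},z_{1},\dots,z_{N})$, with $x_{i}$ the torus coordinates, the bracket is $\{x_{i},x_{j}\}=0$, $\{x_{i},z_{j}\}$ log-canonical (the $A_{0}$-term), and the CGL brackets among the $z_{j}$. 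These are polynomial, so the same formulae make $k[x_{1},\dots,x_{r_{0}},z_{1},\dots,z_{N}]$ into a Poisson CGL extension — with the torus variables last in the order, with torus $T\times T$ acting (left translation on the $x_{i}$, the $B$-translation on the $z_{j}$), and with generators $h_{i}\in\ft\oplus\ft$ chosen compatibly with the mixed term so that $\lambda_{i}(h_{i})\neq0$ (using $p>3$) — of which $(k^{*})^{r_{0}}\times k^{N}$ is a dense open Poisson subvariety.

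Next I would apply Theorem~\ref{A} to this combined extension. Its hypothesis $\rank\pi=\rank\pi_{0}$ is half Proposition~\ref{5}, and for the reverse inequality one runs the inductive step $M\to M'$ in the proof of Theorem~\ref{A} $r_{0}$ times: each torus variable is adjoined with purely log-canonical couplings and non-zero diagonal entry, so the full-rankness of the matrix of Lemma~\ref{fullrank} propagates from the $G/B$ case, where it holds by Theorem~\ref{GBsplit}. Theorem~\ref{A} then produces a Frobenius Poisson Pfaffian for this extension; because $l=\dim(G/U)-\rank\pi_{\sG/\sU}\le r_{0}$ (again a consequence of that full-rankness) one may take its $l$ generating vector fields among the $x_{i}\partial_{x_{i}}$, i.e.\ among the $\partial_{h}$ with $h\in\ft=\Lie(T)$. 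Restricting this Pfaffian to $(k^{*})^{r_{0}}\times k^{N}$ and transporting to $Bw_{0}B/U$ by the isomorphism of Theorem~\ref{mixedGU} yields a non-zero section which, being of the shape $\pi_{\sG/\sU}^{[r]}\wedge\partial_{h_{1}}\wedge\cdots\wedge\partial_{h_{l}}$ with $h_{i}\in\ft$ on a dense open set, coincides with the global section $\sigma$ above. Hence $\sigma$ is a non-zero Poisson $T$-Pfaffian, a scalar multiple of $\sigma^{p-1}$ splits the cell and therefore $G/U$, and uniqueness up to scalar follows since any Poisson $T$-Pfaffian of $(G/U,\pi_{\sG/\sU})$ restricts to the dense cell to one of the combined CGL extension (with generators in $\ft$), which is unique up to scalar.

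The geometric reductions are routine; the real work is twofold and I expect the second half to be the crux. First, one must genuinely verify that $0\bowtie_{A_{0}}\pi_{\sG/\sB}$, written out in coordinates, is a Poisson CGL extension for $T\times T$ — the locally nilpotent $\delta$-parts involving only the $z_{j}$, and the generators $h_{i}$ being chosen so that the $z_{i}z_{j}$- and $z_{i}x_{a}$-bracket coefficients are simultaneously of log-canonical form, which forces specific $h_{i}$ and uses $p>3$ together with a compatible choice of the orthogonal basis entering $A_{0}$. Second, one must establish the rank identity $\rank(0\bowtie_{A_{0}}\pi_{\sG/\sB})=\rank((0\bowtie_{A_{0}}\pi_{\sG/\sB})_{0})$, i.e.\ that adjoining the torus directions to the $G/B$ Poisson CGL extension creates no rank beyond the log-canonical part — handled by the matrix-rank induction of Theorem~\ref{A} seeded by Theorem~\ref{GBsplit} as described above.
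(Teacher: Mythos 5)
Your overall route is the paper's: reduce to the big cell, use Theorem \ref{mixedGU} to pass to $(T\times(Bw_0B/B),\,0\bowtie_{A_0}\pi_{\sG/\sB})$, work in the chart coming from $\calO^{\mathbf e}\subset Z_{\mathbf w_0}$, verify splitting by the monomial criterion, and get uniqueness from the log-canonical local expression. But the step you yourself flag as the crux is where your argument fails as written. The hypothesis of Theorem \ref{A} is the rank equality $\rank\pi=\rank\pi_0$, and your plan to obtain it for the combined extension by ``running the inductive step $M\to M'$ of the proof of Theorem \ref{A} $r_0$ times, seeded by Theorem \ref{GBsplit}'' conflates two different statements: that induction proves, unconditionally for \emph{any} Poisson CGL extension, that the matrix $(\Omega-\Omega^T\ \ \Omega+L+D)$ has full rank (it needs no seeding), and it says nothing about $\rank\pi=\rank\pi_0$, which is an \emph{input} to Theorem \ref{A}, not an output of the matrix induction. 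Nor does the rank equality for the combined structure follow formally from the $G/B$ case: at a generic point the log-canonical and full coefficient matrices are $\begin{pmatrix} C & D\\ -D^T & 0\end{pmatrix}$ and $\begin{pmatrix} C+\Delta & D\\ -D^T & 0\end{pmatrix}$, whose ranks are $2\,\rank D$ plus the ranks of the restrictions of $C$, respectively $C+\Delta$, to $\ker D^T$; equality $\rank C=\rank(C+\Delta)$ does not force these restricted ranks to agree (simple constant $4\times 4$ examples show this). What rescues the approach --- and what the paper actually exploits --- is that in the chart you chose there is nothing to prove: for $\gamma=\mathbf e$ Theorem \ref{A1} produces no $\delta$-terms at all, and the mixed terms coming from Theorem \ref{mixedGU} are log-canonical, so the combined bracket is purely log-canonical and $\pi=\pi_0$ there. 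The paper therefore never re-invokes Theorem \ref{A} for a combined extension: it simply wedges the $G/B$ Pfaffian $z_1\cdots z_n\,\partial_{z_1}\wedge\ldots\wedge\partial_{z_n}$ with all $m$ torus fields $t_j\partial_{t_j}$, and every mixed term of $\kappa_{\dot w_0}\pi_{\sG/\sU}$ dies in the wedge because it contains some $\partial/\partial t_j$, leaving the standard splitting monomial.

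The second gap is your claim that the $l$ completing vector fields may be taken with $h\in\ft=\Lie(T)$, justified only by ``(again a consequence of that full-rankness).'' Theorem \ref{A} only supplies fields $\partial_h$ with $h$ among the CGL data of your combined extension, which live in $\ft\oplus\ft$, and the paper's own section $\widetilde\sigma$ is likewise built from fields attached to elements of $\ft\oplus\ft$ (the $\widetilde x_i$ and $y_j$), not of $\ft$ alone. To run your stronger left-$T$-only version you must prove that generically the image of $\pi_{\sG/\sU}^{\sharp}$ together with the generating fields of the left $T$-action spans the tangent space (equivalently that $l\le r_0$ and torus-direction fields complete the Pfaffian), e.g.\ by invoking the density of the open $T$-leaf from Theorem \ref{TheoremC}; and you must check precisely how $\kappa_{\dot w_0}$ intertwines left translation with the torus actions on $T\times(Bw_0B/B)$ before identifying the left-$T$ fields with the pure $x_i\partial_{x_i}$ (note the paper asserts equivariance for the \emph{diagonal} action, under which those fields would also have components in the $z$-directions). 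Neither point is addressed in your writeup. The remaining reductions --- that the combined algebra is a Poisson CGL extension for $T\times T$, the passage from the dense cell to all of $G/U$, and uniqueness from log-canonicality --- are sound and agree with the paper.
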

\begin{proof}
	Let $w_0$ be the longest element of $W$. We know from Theorem \ref{GBsplit} that a Frobenius Poisson $T$-Pfaffian exists for $(Bw_0B/B,\pi_{\sG/\sB})$. Suppose it is given by
\begin{equation}\label{sigma}
	\sigma=\pi_{\sG/\sB}^{[r]} \wedge v_{x_1} \wedge \ldots \wedge v_{x_{n-2r}}
\end{equation}
	where $x_1, \ldots, x_n \in \ft$. Now let $\widetilde{x}_i=(x_i,0) \in \ft \oplus \ft$. Suppose the rank of $T$ is $m$. For $i=1, \ldots, m$, let $y_i=(0, e_i) \in \ft \oplus \ft$, where $e_i \in k^m$ is $1$ on the $i$-th spot and $0$ elsewhere. Here we have made the identification between $k^m$ and $\ft$ via $(k^{\times})^m \cong T$. Consider the section
	\[
	\widetilde{\sigma}=\pi_{\sG/\sU}^{[r]} \wedge v_{\widetilde{x}_1} \wedge \ldots \wedge v_{\widetilde{x}_{n-2r}} \wedge v_{y_1} \wedge \ldots \wedge v_{y_m} \in H^0(G/U,  \omega^{-1}_{\sG/\sU}).
	\]
	Since $Bw_0B/U$ is a $T$-invariant open subset of $G/U$, we may regard $\pi_{\sG/\sU}$ as a Poisson structure on $Bw_0B/U$ and $v_x$ as a generating vector field for the $T$-action on $Bw_0B/U$. Then the restriction of $\sigma_1$ on $Bw_0B/U$ is given by the same expression as the corresponding restricted $\pi_{\sG/\sU}$ and $v$.
	
Choose a reduced word for $w_0=s_1s_2...s_n$ and get a sequence of simple reflections $\textbf{w}_0=(s_1,s_2,...,s_n)$.
In the toric coordinates $(\varphi^\textbf{e}_{\textbf{w}_0})^{-1}$ on $(Bw_0B/B\cap B_-B/B)$ defined by
\[
\varphi^\textbf{e}_{\textbf{w}_0}: (k^{\times})^{n}\rightarrow  Bw_0B/B\cap B_-B/B,\ \ \     (z_1,...,z_n)\mapsto (\mu_{\textbf{w}_0}\circ\Phi^{\textbf{e}})(z_1,...,z_n),
\]
where $\Phi^{\textbf{e}}$ is defined in (\ref{eq-phi-gamma}) and  $\mu_{\textbf{w}_0}$ is defined in (\ref{resolution}),
we know from the previous section that $\sigma$ in (\ref{sigma}) is given by a scalar multiple of
	\[
	z_1 \ldots z_n \frac{\partial}{\partial z_1} \wedge \ldots \wedge \frac{\partial}{\partial z_n}.
	\]
With respect to the diagonal action of $T$ on $T\times (Bw_0B/B)$, the map
\[\kappa_{\dot w_0}:Bw_0B/U \rightarrow T\times (Bw_0B/B)\]
 defined by (\ref {eqkappa}) is a $T$-equivariant isomorphism. 
 Then
	\[
	\kappa_{\dot w_0}{\widetilde{\sigma}}=(\kappa_{\dot w_0}\pi_{\sG/\sU})^{[r]} \wedge v_{\widetilde{x}_1} \wedge \ldots \wedge v_{\widetilde{x}_{n-2r}} \wedge v_{y_1} \wedge \ldots \wedge v_{y_m} .
	\]
	Here $v$ represents the generating vector field for the $T $-action on $(Bw_0B/B) \times T$.

Now consider the parametrization
 \[(k^{\times})^{m} \times (k^{\times})^{n} \rightarrow T\times (Bw_0B/B\cap B_-B/B).\]
Denote the coordinates on $T$ by $t_1, \ldots ,t_m$.
	It is obvious that $v_{y_i}=t_i \frac{\partial}{\partial t_i}$. Therefore
	\[
	v_{y_1} \wedge \ldots \wedge v_{y_m}=t_1 \ldots t_m  \frac{\partial}{\partial t_1} \wedge \ldots \wedge \frac{\partial}{\partial t_m}.
	\]
	On the other hand, it follows from Proposition \ref{mixedGU} that the expression of $\kappa_{\dot w_0} \pi_{\sG/\sU}$ must be of the form
	\[
	\pi_{\sG/\sB} + \sum_{i,j} f_{ij}\frac{\partial}{\partial z_i} \wedge \frac{\partial}{\partial t_j}.
	\]
	Since the $\pi_{\sG/\sB}$ terms are the only terms in $\kappa_{\dot w_0}\pi_{\sG/\sU}$ that do not involve $\frac{\partial}{\partial t_j}$, we must have
	\[
	\begin{aligned}
	\kappa_{\dot w_0}{\widetilde{\sigma}}=& \pi_{\sG/\sB}^{[r]} \wedge v_{\widetilde{x}_1} \wedge \ldots \wedge v_{\widetilde{x}_{n-2r}} \wedge v_{y_1} \wedge \ldots \wedge v_{y_m} \\
	=& z_1 \ldots z_n t_1 \ldots t_m  \frac{\partial}{\partial z_1} \wedge \ldots \wedge \frac{\partial}{\partial z_n} \wedge \frac{\partial}{\partial t_1} \wedge \ldots \wedge \frac{\partial}{\partial t_m}.
	\end{aligned}
	\]
	Hence $\kappa_{\dot w_0}{\widetilde{\sigma}}$ gives a Frobenius splitting on $(Bw_0B/B) \times T$, which implies that $\widetilde{\sigma}$ gives a Frobenius splitting on $Bw_0B/U$ and hence on $G/U$.
	
	Proposition \ref{mixedGU} implies that $\kappa_{\dot w_0}\pi_{\sG/\sU}$ is log-canonical in the same parametrization as above. Now $\widetilde{\sigma}$ is a Poisson $T$-Pfaffian and uniqueness also follows from the the local expression of $\kappa_{\dot w_0}\widetilde{\sigma}$.
\end{proof}

\end{document}